\title{Goodness-of-fit testing for Hölder continuous densities under local differential privacy}
\author[,1]{Amandine Dubois \thanks{Financial support from GENES and from the French ANR grant ANR-18-EURE-0004}}
\author[2]{Thomas B. Berrett}
\author[3]{Cristina Butucea
\thanks{Financial support from GENES and the French National Research Agency (ANR) under the grant Labex Ecodec (ANR-11-LABEX-0047) }}
\affil[1]{CREST, ENSAI, Campus de Ker-Lann - Rue Blaise Pascal - BP 37203 - 35172 BRUZ cedex  amandine.dubois@ensai.fr}
\affil[2]{Department of Statistics, University of Warwick - Coventry - CV4 7AL - United Kingdom
tom.berrett@warwick.ac.uk}
\affil[3]{CREST, ENSAE, Institut Polytechnique de Paris, 5 avenue Henry Le Chatelier, F-91120 Palaiseau 
cristina.butucea@ensae.fr}
\date{}
\begin{document}

\maketitle

\begin{abstract}
We address the problem of goodness-of-fit testing for Hölder continuous densities under local differential privacy constraints. We study minimax separation rates when only non-interactive privacy mechanisms are allowed to be used and when both non-interactive and sequentially interactive can be used for privatisation.
We propose privacy mechanisms and associated testing procedures whose analysis enables us to obtain upper bounds on the minimax rates. These results are complemented with lower bounds.
By comparing these bounds, we show that the proposed privacy mechanisms and tests are optimal up to at most a logarithmic factor for several choices of $f_0$ including densities from uniform, normal, Beta, Cauchy, Pareto, exponential distributions. In particular, we observe that the results are deteriorated in the private setting compared to the non-private one. Moreover, we show that sequentially interactive mechanisms improve upon the results obtained when considering only non-interactive privacy mechanisms.
\end{abstract}

\section{Introduction}

Over the past few years, data privacy has become a fundamental problem in statistical data analysis. While more and more personal data are collected each day, stored and analyzed,  private data analysis aims at publishing valid statistical results without compromising the privacy of the individuals whose data are analysed. 
Differential privacy has emerged from this line of research as a strong mathematical framework which provides rigorous privacy guarantees.

Global differential privacy has been formalized by Dwork et al. \cite{Dwork_et_al_2006_calibrating_noise}. 
Their definition requires a curator who gathers the confidential data of $n$ individuals  and generates a privatized output from this complete information.
Only this privatized output can be released.
In a nutshell, the differential privacy constraints require that altering a single entry in the original dataset does not affect the probability of a privatized output too much.
One intuition behind this definition is that if the distribution of the privatized output does not depend too much on any single element of the database, then it should be difficult for an adversary to guess if one given person is in the database or not.
We refer the reader to \cite{Wasserman_Zhou_2010_statistical_framework_DP} for a precise definition of global differential privacy and more discussion on its testing interpretation.
In this paper, we will rather focus on the stronger notion of local differential privacy for which no trusted curator is needeed.
In the local setup, each individual generates a privatized version of its true data on its own machine, and only the privatized data are collected for analysis.
Thus, the data-owners do not have to share their true data with anyone else. However, some interaction between the $n$ individuals can be allowed.
We will consider two specific classes of locally differentially privacy mechanisms : non-interactive and sequentially interactive privacy mechanisms, respectively.
In the local non-interactive scenario, each individual generates a private view $Z_i$ of its original data $X_i$ on its own machine independently of all the other individuals.
In the sequentially interactive scenario, the privatized data $Z_1, \ldots , Z_n$ are generated such that the $i$-th individual has access to the previously privatized data $Z_1, \ldots , Z_{i-1}$ in addition to the original data $X_i$ in order to generate its own $Z_i$.

In this paper, we study a  goodness-of-fit testing problem for densities under local differential privacy constraints. Goodness-of-fit testing problems consist in testing whether  $n$ independent and identically distributed random variables $X_1,...,X_n$ were drawn from a  specified distribution $P_0$ or from any other distribution $P$ with $d(P_0,P)\geq \rho$ for some distance between distributions $d$ and some  \textit{separation parameter} $\rho>0$. Here, the considered distributions will be assumed to have Hölder smooth densities  and we will measure the separation between distributions using the $L_1$ norm which corresponds (up to a constant) to the total variation distance. Moreover, only privatised data $Z_1,...,Z_n$ are supposed available to be used in order to design testing procedures. Therefore we proceed in two steps: first randomize the original sample into a private sample, then build a  test using the latter sample. Optimality is shown over all test procedures and additionally over all privacy mechanisms satisfying the privacy constraints. We adopt a minimax point of view and aim at determining the private minimax testing radius which is the smallest separation parameter for which there exists a private testing procedure whose first type and second type error probabilities are bounded from above by a constant fixed in advance.

\subsection*{Contributions}
Our contributions can be summarized as follows.
First, when non-interactive privacy mechanisms are used, we present an $\alpha$-locally differentially private such mechanism and construct a testing procedure based on the privatized data. Its analysis indicates how to tune the parameters of the test statistic and the threshold of the test procedure in order to get a least upper bound on the non interactive testing radius. This result is further complemented with a lower bound.

Next, we prove that these bounds can be improved when allowing for sequential interaction. When previously privatized random variables are publicly available, we may proceed in two steps in order to improve on the detection rates. The first part of the sample is privatized as in the non-interactive case and it is used to acquire partial information on the unknown probability density. This information is further encoded in the private versions of the second part of the sample and the whole procedure benefits and attains faster rates of detection. This idea was previously introduced in  \cite{Butucea_Rohde_Steinberger_2020} and was also successful for testing discrete distributions in \cite{Berrett_Butucea_2020_DiscreteDistribTesting}. 

Finally, we investigate the optimality of our results for many choices of the null density $f_0$. We prove that our lower bounds and upper bounds match up to a constant in the sequentially interactive scenario, and up to a logarithmic factor in the non-interactive scenario, for several $f_0$ including densities from uniform, gaussian, beta, Cauchy, Pareto and exponential distributions.

\subsection*{Related work}
Goodness-of-fit testing for separation  norm $\|\cdot \|_1$ has recently received great attention in the non-private setting.  Valiant and Valiant~\cite{Valiant_Valiant_2017} studies the case of discrete distributions. Given a discrete distribution $P_0$ and an unknown discrete distribution $P$, they tackle the problem of finding how many samples from $P$ one should obtain to be able to distinguish with high probability the case that $P=P_0$ from the case that $\Vert P- P_0\Vert_1\geq \varepsilon$. They provide both upper bounds and lower bounds on this sample complexity as a function of $\varepsilon$ and the null hypothesis $P_0$.
Other testing procedures for this problem have been proposed in \cite{Diakonikolas_Kane_2016}, and   \cite{Balakrishnan_Wasserman_2019_hypothesisTesting} has revisited the problem in a minimax framework similar to the one considered in this paper (without privacy constraints).
Note that before these papers, the majority of the works on this problem focused on the case where $P_0$ is the uniform distribution, or considered a worst-case setting. The upper and lower bounds obtained in \cite{Valiant_Valiant_2017} and  \cite{Balakrishnan_Wasserman_2019_hypothesisTesting} appear to match in most usual cases but do not match for some pathological distributions.
This problem has been fixed in \cite{Chhor_Carpentier_2020_sharp}, where the authors provide matching upper and lower bounds on the minimax separation distance for separation norm $\|\cdot \|_t$, $t$ in $[1,2]$.
As for the continuous case,  \cite{Balakrishnan_Wasserman_2019_hypothesisTesting} studies goodness-of-fit testing for densities with separation norm $\Vert \cdot \Vert_1$, focusing on the case of Hölder continuous densities. As it has already been observed for the discrete case, they prove that the local minimax testing radius (or minimax separation distance) strongly depends on the null distribution. We extend their results to the private setting.

Many papers have been devoted to the study of testing problems under global differential privacy constraints. This includes goodness-of-fit testing \cite{Gaboardi_Lim_Rogers_Vadhan_2016_DPgoodnessOfFitAndIndependenceTesting,Acharya_Sun_Zhang_2018_DP_Testing_Identity_closeness,Aliakbarpour_Diakonikolas_Rubinfield_2018_DP_identity_Equiv_testing,Cai_Daskalakis_Kamath_2017_privIT,Wang_Lee_Kifer_2015_Hypothesis_test_categorical_data}, independence testing \cite{Gaboardi_Lim_Rogers_Vadhan_2016_DPgoodnessOfFitAndIndependenceTesting, Wang_Lee_Kifer_2015_Hypothesis_test_categorical_data} and closeness testing \cite{Acharya_Sun_Zhang_2018_DP_Testing_Identity_closeness,Aliakbarpour_Diakonikolas_Rubinfield_2018_DP_identity_Equiv_testing}.
In the local setting of differential privacy, \cite{Kairouz_Oh_Viswanath_2014_extremal_mechanisms,Kairouz_Oh_Viswanath_2016_extremal_mechanisms, Joseph_Mao_Neel_Roth_2019_role_interactivity} study simple hypothesis testing, and \cite{Gaboardi_Rogers_2018_local_private_Hypothesis_testing_chi2,Sheffet_2018_locally_Private_Hypothesis_Testing,Acharya_et_al_Test_without_trust_short} consider independence testing. 
Some of these references and a few others also deal with goodness-of-fit testing under local differential privacy constraints: 
\cite{Gaboardi_Rogers_2018_local_private_Hypothesis_testing_chi2} studies the asymptotic distribution of several test statistics used for fitting multinomial distributions, while  \cite{Sheffet_2018_locally_Private_Hypothesis_Testing} and \cite{Acharya_et_al_Test_without_trust_short} provide upper and lower bounds on the sample complexity for fitting more general but finitely supported discrete distributions. 
However, \cite{Acharya_et_al_Test_without_trust_short} considers only the case where the null distribution $P_0$ is the uniform distribution, and both papers prove lower bounds only with respect to the choice of the test statistic for a fixed specific privacy mechanism. In the minimax results below we prove optimality over all test statistics and also over all privacy mechanisms submitted to the local differential privacy constraints.

Minimax goodness-of-fit testing for discrete random variables has first been studied with $\mathbb{L}_2$ separation norm in  \cite{LamWeil_Laurent_Loubes_2020_densityTesting}. They consider the non-interactive scenario exclusively, and their lower bound result is proven for the uniform distribution $P_0$ under the null.
Lam-Weil {\it et al.} \cite{LamWeil_Laurent_Loubes_2020_densityTesting} also tackles the problem of goodness-of-fit testing for continuous random variables with $\Vert \cdot \Vert_2$ separation norm. They are the first to study minimax testing rates for the problem of goodness-of-fit testing for compactly supported densities over Besov balls $\mathcal{B}^s_{2,\infty}(L)$  in the setting of non-interactive local differential privacy. They provide an upper bound which holds for any density $f_0$, and a matching lower bound in the special case where $f_0$ is the uniform density over $[0,1]$.
In a parallel work, \cite{Butucea_Rohde_Steinberger_2020} investigates the estimation of the integrated square of a density over general Besov classes $\mathcal{B}^s_{p,q}$, and prove that allowing for sequential interaction improves over the results obtained in the non-interactive scenario in terms of minimax estimation rates.
As an application, they discuss non-interactive and sequentially interactive $L_2$-goodness-of-fit testing for densities supported on $[0,1]$ which lie in Besov balls. They thus extend the results obtained in \cite{LamWeil_Laurent_Loubes_2020_densityTesting} to more general Besov balls, to the interactive scenario, and to the case where $f_0$ is not assumed to be the uniform distribution, but has to be bounded from below on its support.

Later, locally differentially private goodness-of-fit testing for discrete random variables (not necessarily finite supported) has  been studied in \cite{Berrett_Butucea_2020_DiscreteDistribTesting} in a minimax framework.
The authors aim at computing the minimax testing rates when $d(P,P_0)=\sum_{j=1}^d\vert P(j)-P_0(j)\vert^{i}$, $i\in\{1,2\}$. They provide upper bounds on the minimax testing rates by constructing and analysing specific private testing procedures, complement these results with lower bounds, and investigate the optimality of their results for several choices of the null distribution $P_0$. 
Interestingly, they tackle both the sequentially interactive case and the non-interactive case and prove that the minimax testing rates are improved when sequential interaction is allowed.
Such a phenomenon appears neither for simple hypothesis testing \cite{Joseph_Mao_Neel_Roth_2019_role_interactivity}, nor for many estimation problems (see for instance \cite{Duchi_Joradn_Wainwright2018minimax,Berrett_Butucea_2019_Classification,Rohde_Steinberger_2020_geometrizing,Butucea_Dubois_Kroll_Saumard}).

We pursue these works  by considering goodness-of-fit testing of Hölder-smooth probability densities and the separation norm $\Vert \cdot \Vert_1$. Moreover, similarly to \cite{Balakrishnan_Wasserman_2019_hypothesisTesting}, we consider densities with H\"older smoothness $\beta$ in (0,1] and that can tend to 0 on their support,  with possibly unbounded support. Our goal is to show how differential privacy affects the minimax separation radius for this goodness-of-fit test. Balakrishnan and Wasserman\cite{Balakrishnan_Wasserman_2019_hypothesisTesting}, following works in discrete testing initiated by \cite{Valiant_Valiant_2017}, have shown that two procedures need to be aggregated in this case. They split the support of the density $f_0$ into a compact set $B $ where $f_0$ is bounded from below by some positive constant and they build a weighted $\mathbb{L}_2$ test on this set; then they build a tail test on $\overline{B}$ which is based on estimates of the total probabilities $(P-P_0)(\overline{B})$. They show that the separation rates are of order 
$$
\left( \frac{(\int_{B} f_0(x)^\gamma dx)^{1/\gamma} }{n} \right)^{\frac{2 \beta}{4 \beta +d}}, \quad \text{where } \gamma = \frac{2 \beta}{3\beta +d},
$$ 
for $d-$dimensional observations and depend of $f_0$ via an integral functional. The cut-off (choice of $B$) will depend on $n$ and their separation rates are not minimax optimal due to different cut-offs in the upper and lower bounds.

We show that under local differential privacy constraints, we get for an optimal choice of $B$ the separation rates
$$
\vert B \vert^{\frac{3\beta+3}{4\beta+3}}(n\alpha^2)^{-\frac{2\beta}{4\beta+3}}
$$
when only non-interactive privacy mechanisms are allowed, and we show that better rates are obtained
$$
\vert B \vert^{\frac{\beta+1}{2\beta+1}}(n\alpha^2)^{-\frac{2\beta}{4\beta+2}}
$$
when interactive privacy mechanims are allowed (using previously published privatized information). We see that our rates only depend on $f_0$ in a global way through the length $|B|$ of the set $B$ and that explains why we do not need to weight the $\mathbb{L}_2$ test statistic. Further work will include extension to more general H\"older and Besov classes with $\beta >0$ and adaptation to the smoothness $\beta$ by aggregation of an increasing number of tests as introduced by \cite{Spokoiny}.

\subsection*{Organization of the paper}

The paper is organized as follows. In Section \ref{Section Problem statement} we introduce the notion of local differential privacy and describe the minimax framework considered in the rest of the paper. In Section \ref{Section Non-interactive scenario} we introduce a non-interactive privacy mechanism and an associated testing procedure. Its analysis leads to an upper bound on the non-interactive testing radius which is complemented by a lower bound. In Section \ref{Section Intercative scenario} we give a lower bound on the testing radius for the sequentially interactive scenario and present a sequentially interactive testing procedure which improves on the rates of the non interactive case.  In Section \ref{Section Examples} we prove that our results are optimal (at most up to a logarithmic factor) for several choices of the null density $f_0$.

\section{Problem statement}\label{Section Problem statement}

Let $(X_1,...,X_n)\in \Xc^n$ be i.i.d. with common probability density function (pdf) $f:\Xc \to \Rb_+$. We assume that $f$ belongs to the smoothness class $H(\beta, L)$ for some smoothness $ 0<\beta \leq 1$ and $L>0$, where
$$ H(\beta, L)= \left\{f:\Xc\to \Rb_+ \; : \; \vert f(x)-f(y)\vert \leq L\vert x-y\vert^\beta, \quad \forall x,y\in\Xc \right\}. $$
In the sequel, we will omit the space $\Xc$ in the definition of functions $f$ and $f_0$ and integrals, and we will choose a set $B$ such that $B \subset \Xc$ and denote by $\overline{B}=\Xc\setminus B$.

Given a probability density function $f_0$ in $H(\beta,L_0)$ for some $L_0<L$, we want to solve the goodness-of-fit test
\begin{eqnarray*}
H_0 &:& f \equiv f_0\\
H_1(\rho) &:& f \in H(\beta,L) \text{ and } \|f-f_0\|_1 \geq \rho,
\end{eqnarray*}
where $\rho >0$ under an $\alpha$-local differential privacy constraint.
We will consider two classes of locally differentially private mechanisms : sequentially interactive mechnisms and non-interactive mechanisms.
In the sequentially interactive scenario, privatized data $Z_1,\ldots,Z_n$ are obtained by successively applying suitable Markov kernels : given $X_i=x_i$ and $Z_1=z_1,\ldots,Z_{i-1}=z_{i-1}$, the i-th data-holder draws
$$
Z_i\sim Q_i(\cdot\mid X_i= x,Z_1= z_1,\ldots,Z_{i-1}=z_{i-1})
$$
for some Markov kernel $Q_i: \Zs \times \Xc \times \Zc^{i-1} \to [0,1]$ where the measure spaces of the non-private and private data are denoted with $(\Xc,\Xs)$ and $(\Zc,\Zs)$, respectively.
We say that the sequence of Markov kernels $(Q_i)_{i=1,\ldots,n}$ provides $\alpha$-local differential privacy or that $Z_1,\ldots,Z_n$ are $\alpha$-local differentially private views of $X_1,\ldots,X_n$ if 
\begin{equation}\label{eq alphaLDPconstraint}
    \sup_{A \in \Zs}\sup_{z_1,\ldots,z_{i-1}\in\Zc}\sup_{x,x'\in\Xc} \frac{Q_i(A \mid X_i=x,Z_1=z_1,\ldots,Z_{i-1}=z_{i-1})}{Q_i(A \mid X_i=x^\prime,Z_1=z_1,\ldots,Z_{i-1}=z_{i-1})} \leq e^\alpha, \, \text{ for all } i=1,\ldots,n.
\end{equation}
We will denote by $\Qc_\alpha$ the set of all $\alpha$-LDP sequentially interactive mechanisms.
In the non-interactive scenario $Z_i$ depends only on $X_i$ but not on $Z_k$ for $k<i$.
We have 
$$
Z_i \sim Q_i(\cdot \mid X_i = x_i), 
$$
and condition \eqref{eq alphaLDPconstraint} becomes
$$
\sup_{A \in \Zs}\sup_{x,x'\in\Xc} \frac{Q_i(A \mid X_i=x)}{Q_i(A \mid X_i=x^\prime)} \leq e^\alpha, \, \text{ for all } i=1,\ldots,n.
$$
We will denote by $\Qc_\alpha^{\text{NI}}$ the set of all $\alpha$-LDP non-interactive mechanisms.
Given an $\alpha$-LDP privacy mechansim $Q$, let $\Phi_Q=\{ \phi : \Zc^n\rightarrow \{0,1\}\}$ denote the set of all tests based on $Z_1,\ldots Z_n$.

The sequentially interactive $\alpha$-LDP minimax testing risk is given by 
$$
\Rc_{n,\alpha}(f_0,\rho):= \inf_{Q\in \Qc_\alpha}\inf_{\phi\in \Phi_Q} \sup_{f\in H_1(\rho)}\left\{\Pb_{Q_{f_0}^n}(\phi=1)+ \Pb_{Q_{f}^n}(\phi=0) \right\}.
$$
We  define  similarly the non-interactive $\alpha$-LDP minimax testing risk $\Rc^{\text{NI}}_{n,\alpha}(f_0,\rho)$, where the first infimum is taken over the set $\Qc_\alpha^{\text{NI}}$ instead of $\Qc_\alpha$.
Given $\gamma\in(0,1)$, we study the $\alpha$-LDP minimax testing radius defined by 
$$
\Ec_{n,\alpha}(f_0,\gamma):=\inf \left\{\rho>0 :  \Rc_{n,\alpha}(f_0,\rho)\leq \gamma \right\},
$$
and we define similarly $\Ec^{\text{NI}}_{n,\alpha}(f_0,\gamma)$.

{\bf Notation}  For any positive integer number $n$, we denote by  $\llbr 1,n\rrbr$ the set of integer values $\{1,2,...,n\}$. If $B$ is a compact set on $\mathbb{R}$, we denote by $|B|$ its length (its Lebesgue measure). For any function $\psi$ and any positive real number $h$, we denote the rescaled function by $\psi_h = \frac 1h \psi \left(\frac {\cdot}h \right)$.
For two sequences $(a_n)_n$ and $(b_n)$, we denote by $a_n\lesssim b_n$ that there exists some constant $C>0$ such that $a_n\leq C b_n$, and we write $a_n \asymp b_n$ if both $a_n\lesssim b_n$ and $b_n\lesssim a_n$.

\section{Non-interactive Privacy Mechanisms}\label{Section Non-interactive scenario}

In this section we design a non-interactive $\alpha$-locally differentially private mechanism  and the associated testing procedure.
We study successively its first and second type error probabilities in order to obtain an upper bound on the testing radius $\Ec^{\text{NI}}_{n,\alpha}(f_0,\gamma)$.
We then present a lower bound on the testing radius.
The test and privacy mechanism proposed in this section will turn out to be (nearly) optimal for many choices of $f_0$ since the lower bound and the upper bound match up to a logarithmic factor for several $f_0$, see Section \ref{Section Examples} for many examples.

\subsection{Upper bound in the non-interactive scenario}\label{Section NI upper bound}

We propose a testing procedure that, like  \cite{Balakrishnan_Wasserman_2019_hypothesisTesting}, combines an $\mathbb{L}_2$ procedure on a bulk set $B$ where the density $f_0$ under the null is bounded away from 0 by some (small) constant and an $\mathbb{L}_1$ procedure on the tail $\overline B$. However, we note that, unlike \cite{Balakrishnan_Wasserman_2019_hypothesisTesting}, the rate depends on $f_0$ in a global way, only through the length $|B|$ of the set $B$. Our procedure also translates to the case of continuous distributions the one proposed by Berrett and Butucea~\cite{Berrett_Butucea_2020_DiscreteDistribTesting} for locally private testing of discrete distributions.
It consists in the following steps:
\begin{enumerate}
    \item Consider a compact set $B\subset\Rb$ (its choice depends on $f_0$, and on values of $n$ and $\alpha$).
    \item Using the first half of the (privatized) data, define an estimator $S_B$ of $\int_B(f-f_0)^2$.
    \item Using the second half of the (privatized) data, define an estimator $T_B$ of $\int_{\bar{B}}(f-f_0)$.
    \item Reject $H_0$ if either $S_B\geq t_1$ or $T_B\geq t_2$.
\end{enumerate}

Assume without loss of generality that the sample size is even and equal to $2n$ so that we can split the data into equal parts, $X_1,\ldots,X_n$ and $X_{n+1},\ldots,X_{2n}$.
Let  $B\subset \Rb$ be a nonempty  compact set, and let $(B_j)_{j=1,\ldots,N}$ be a partition of $B$, $h>0$ be the bandwidth and $(x_1,\ldots,x_N)$ be the centering points, that is $B_j=[x_j-h,x_j+h]$ for all $j\in \llbr 1,N\rrbr$.
Let $\psi:\Rb\rightarrow \Rb$ be a function satisfying the following assumptions.
\begin{assumption}\label{Assumption on the kernel}
$\psi$ is a bounded function supported in $[-1,1]$ such that 
$$
\int_{-1}^1\psi(t)\dd t=1,\quad \text{and} \quad \int_{-1}^1\vert t\vert^\beta \vert \psi(t)\vert \dd t <\infty.
$$
\end{assumption}
In particular, Assumption \ref{Assumption on the kernel} implies that $\psi_h(x_j-y)=0$ if $y\not\in B_j$, where $\psi_h(u)=\frac{1}{h}\psi(\frac{u}{h})$.\\
We now define our first privacy mechanism.
For $i\in \llbr 1,n\rrbr $ and $j\in\llbr 1, N\rrbr$ set
$$
Z_{ij}=\frac{1}{h}\psi\left(\frac{x_j-X_i}{h} \right)+\frac{2\Vert \psi\Vert_\infty}{\alpha h} W_{i j},
$$ 
where $(W_{ij})_{i\in\llbr 1,n\rrbr, j\in\llbr 1, N\rrbr}$ is a sequence of i.i.d Laplace($1$) random variables.
Using these privatized data, we define the following U-statistic of order $2$.
$$
S_B:=\sum_{j=1}^N\frac{1}{n(n-1)}\sum_{i\neq k}(Z_{ij}-f_0(x_j))(Z_{kj}-f_0(x_j)).
$$
The second half of the sample is used to design a tail test.
For all $i\in \llbr n+1, 2n\rrbr $ set
$$
Z_i = \pm c_\alpha, \text{ with probabilities }\frac 12 \left(1 \pm \frac{I(X_i \not \in B)}{c_\alpha} \right),
$$
where $c_\alpha = (e^\alpha +1)/(e^\alpha - 1)$. 
Using these private data, we define the following statistic.
$$
T_B =\frac{1}{n}\sum_{i=n+1}^{2n}Z_i - \int_{\overline B} f_0.
$$
We then put 
\begin{equation}\label{Non Interactive Test procedure}
\Phi=
\begin{cases}
1 & \text{ if } S_B\geq t_1 \text{ or }  T_B\geq t_2 \\
0 & \text{ otherwise }
\end{cases},
\end{equation}
where
\begin{equation}\label{Eq Non Interactive t_1 and t_2}
t_1= \frac{3}{2}L_0^2C_\beta^2Nh^{2\beta}+ \frac{196\Vert\psi\Vert_\infty^2\sqrt{N}}{\gamma n\alpha^2h^2}, \quad t_2=\sqrt{\frac{20}{n\alpha^2\gamma}},
\end{equation}
with $C_\beta=\int_{-1}^1\vert u\vert^\beta \vert \psi(u)\vert \dd u$.
The privacy mechanism that outputs $(Z_1,\ldots,Z_n,Z_{n+1},\ldots,Z_{2n})$ is non-interactive since for all $i\in\llbr 1,2n\rrbr$ $Z_i$ depends only on $X_i$.
The following result establishes that this mechanism also provides $\alpha$-local differential privacy.
Its proof is deferred to Section \ref{Section Proof Privacy of the NI mechanism} in the Appendix.

\begin{proposition}\label{Prop Privacy of the NI Mechanism}
For all $i\in\llbr 1, 2n\rrbr$, $Z_i$ is an $\alpha$-locally differentially private view of $X_i$.
\end{proposition}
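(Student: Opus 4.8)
The plan is to verify the non-interactive privacy condition $\sup_{A\in\Zs}\sup_{x,x'}Q_i(A\mid X_i=x)/Q_i(A\mid X_i=x')\le e^\alpha$ separately for the two types of privatised output, since $Z_i$ for $i\le n$ is built from a Laplace perturbation and $Z_i$ for $i>n$ is a randomised-response-type binary variable. For the first half, I would fix $i\le n$ and note that the output $Z_i=(Z_{i1},\dots,Z_{iN})$ has, conditionally on $X_i=x$, a density with respect to Lebesgue measure on $\Rb^N$ that factorises over $j$, each coordinate being a shifted Laplace: $Z_{ij}$ has density $g\bigl(z-\tfrac1h\psi(\tfrac{x_j-x}{h})\bigr)$ where $g$ is the density of a Laplace variable with scale parameter $2\|\psi\|_\infty/(\alpha h)$. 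The key observation is that, by Assumption \ref{Assumption on the kernel}, the point $x$ lies in at most one of the intervals $B_j$, so for each fixed $x$ at most one of the shifts $\tfrac1h\psi(\tfrac{x_j-x}{h})$ is nonzero, and in absolute value any such shift is at most $\|\psi\|_\infty/h$. Hence for two inputs $x,x'$ the ratio of joint densities is a product over $j$ of $\exp\bigl(\tfrac{\alpha h}{2\|\psi\|_\infty}\bigl(|z_j - \mu_j(x')| - |z_j-\mu_j(x)|\bigr)\bigr)$; by the triangle inequality each factor is bounded by $\exp\bigl(\tfrac{\alpha h}{2\|\psi\|_\infty}|\mu_j(x)-\mu_j(x')|\bigr)$, and since the shifts have disjoint supports in $j$ the exponents sum to at most $\tfrac{\alpha h}{2\|\psi\|_\infty}\cdot\tfrac{2\|\psi\|_\infty}{h}=\alpha$. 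This gives the bound $e^\alpha$ for every Borel set $A$ by integrating the density bound.

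For the second half, fix $i\in\llbr n+1,2n\rrbr$; here $Z_i$ takes only the two values $\pm c_\alpha$, so it suffices to bound the ratio of the two atom probabilities. For $Z_i=c_\alpha$ the probability is $\tfrac12\bigl(1+ I(x\notin B)/c_\alpha\bigr)$, which ranges over $\{\tfrac12,\tfrac12(1+1/c_\alpha)\}$ as $x$ varies; the ratio of the largest to the smallest is $1+1/c_\alpha$, and plugging in $c_\alpha=(e^\alpha+1)/(e^\alpha-1)$ gives $1+\tfrac{e^\alpha-1}{e^\alpha+1}=\tfrac{2e^\alpha}{e^\alpha+1}\le e^\alpha$. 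The same computation applies to $Z_i=-c_\alpha$ by symmetry (the probability ranges over $\{\tfrac12,\tfrac12(1-1/c_\alpha)\}$, and again the ratio of extremes is at most $e^\alpha$ — one checks $\tfrac{1}{1-1/c_\alpha} = \tfrac{c_\alpha}{c_\alpha-1}=\tfrac{e^\alpha+1}{2}\le e^\alpha$ using $e^\alpha+1\le 2e^\alpha$). Since only these two values occur, the supremum over $A\in\Zs$ is attained at a singleton, and the non-interactive privacy bound $e^\alpha$ follows.

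The only genuinely delicate point is the disjoint-support argument in the first half: one must use Assumption \ref{Assumption on the kernel} — specifically that $\psi$ is supported in $[-1,1]$, so that $\psi_h(x_j-\cdot)$ vanishes outside $B_j=[x_j-h,x_j+h]$ and the intervals $B_j$ are disjoint (being a partition of $B$) — to ensure that the perturbations across coordinates $j$ do not accumulate, i.e. that changing $x$ to $x'$ affects at most two coordinates and the total $\ell_1$ change of the mean vector is bounded by $2\|\psi\|_\infty/h$ rather than $N$ times that. Everything else is a routine application of the triangle inequality and the explicit form of the Laplace density, together with the elementary inequality $e^\alpha+1\le 2e^\alpha$. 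I would present the two cases as two short displayed computations and then conclude that, since each $Z_i$ depends only on $X_i$, the mechanism is non-interactive and $\alpha$-LDP.
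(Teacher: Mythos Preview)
Your proposal is correct and follows essentially the same approach as the paper: for $i\le n$ you exploit the factorised Laplace density, the triangle inequality on the exponent, and the disjoint-support observation from Assumption~\ref{Assumption on the kernel} to bound the total $\ell_1$ shift of the mean vector by $2\|\psi\|_\infty/h$; for $i>n$ you check the two atom probabilities directly via $c_\alpha=(e^\alpha+1)/(e^\alpha-1)$ and the elementary inequality $e^\alpha+1\le 2e^\alpha$. This is exactly what the paper does.
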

The following proposition studies the properties of the test statistics. Its proof is given in the Appendix  \ref{App Proof Upper Bound NI}. 
\begin{proposition}\label{Prop Test Stat NI}
1. It holds
\begin{equation}\label{Eq Mean of the chi2stat NI}
\Eb_{Q_{f}^n}\left[S_B \right]=\sum_{j=1}^N\left([\psi_h\ast f](x_j)-f_0(x_j) \right)^2.
\end{equation}
Under Assumption \ref{Assumption on the kernel} it also holds if $\alpha\in (0,1]$
\begin{equation}\label{Eq Variance of the chi2stat NI}
\Var_{Q_{f}^n}\left(S_B \right)\leq \frac{36\Vert \psi\Vert_\infty^2}{n \alpha^2 h^2}\sum_{j=1}^N\left([\psi_h\ast f](x_j)-f_0(x_j) \right)^2+\frac{164\Vert \psi\Vert_\infty^4N}{n(n-1)\alpha^4h^4}.
\end{equation}

2. It holds
$$
 \Eb_{Q_{f}^n}[T_B] =\int_{\overline B} (f-f_0),\quad \text{and} \quad\Var_{Q_{f}^n}( T_B)=\frac{1}{n}\left(c_\alpha^2-\left(\int_{\overline B} f \right)^2  \right).
 $$
\end{proposition}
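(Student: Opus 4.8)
\emph{Overall approach.} I would prove the two parts separately, starting with the cheaper one. For $T_B$: conditionally on $X_i$ the variable $Z_i$ takes the values $\pm c_\alpha$ with the prescribed probabilities, so $\Eb_{Q_{f}^n}[Z_i\mid X_i]=c_\alpha\cdot \tfrac{I(X_i\notin B)}{c_\alpha}=I(X_i\notin B)$; taking expectations gives $\Eb_{Q_{f}^n}[Z_i]=\int_{\overline B}f$, hence $\Eb_{Q_{f}^n}[T_B]=\int_{\overline B}(f-f_0)$. Since $Z_i^2=c_\alpha^2$ identically, $\Var_{Q_{f}^n}(Z_i)=c_\alpha^2-(\int_{\overline B}f)^2$, and independence of $Z_{n+1},\dots,Z_{2n}$ yields $\Var_{Q_{f}^n}(T_B)=\tfrac1n(c_\alpha^2-(\int_{\overline B}f)^2)$. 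For $S_B$, the mean \eqref{Eq Mean of the chi2stat NI} is immediate: for $i\neq k$ the variables $Z_{ij}$ and $Z_{kj}$ are independent, with common mean $\Eb_{Q_{f}^n}[Z_{1j}]=\int \psi_h(x_j-y)f(y)\,\dd y=[\psi_h\ast f](x_j)$ because the Laplace noise is centered, so each summand of the $U$-statistic has expectation $([\psi_h\ast f](x_j)-f_0(x_j))^2$.

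\emph{Variance of $S_B$: the decomposition.} Write $\bar Z_{ij}:=Z_{ij}-[\psi_h\ast f](x_j)$ and $\mu_j:=[\psi_h\ast f](x_j)-f_0(x_j)$. Expanding $(Z_{ij}-f_0(x_j))(Z_{kj}-f_0(x_j))=(\bar Z_{ij}+\mu_j)(\bar Z_{kj}+\mu_j)$ and summing over $i\neq k$ gives $S_B-\Eb_{Q_{f}^n}[S_B]=A+D$ with $A:=\sum_{j=1}^N\frac{1}{n(n-1)}\sum_{i\neq k}\bar Z_{ij}\bar Z_{kj}$ and $D:=\frac2n\sum_{i=1}^n\sum_{j=1}^N\mu_j\bar Z_{ij}$. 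Since the blocks $(X_i,(W_{ij})_{j})$ are i.i.d.\ in $i$ and each $\bar Z_{ij}$ is centered, every term in $\Eb_{Q_f^n}[AD]$ involves an individual index occurring exactly once and so vanishes; thus $\operatorname{cov}(A,D)=0$ and $\Var_{Q_{f}^n}(S_B)=\Var_{Q_{f}^n}(A)+\Var_{Q_{f}^n}(D)$, which I would bound term by term.

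\emph{The structural fact and $\Var(D)$.} The function $y\mapsto\psi_h(x_j-y)$ is supported on $B_j$, and the $B_j$ partition $B$, so for $j\neq j'$ one has $\psi_h(x_j-X_i)\psi_h(x_{j'}-X_i)=0$ almost surely. Splitting $\bar Z_{1j}$ into its $X_1$-measurable "data'' part $\psi_h(x_j-X_1)-[\psi_h\ast f](x_j)$ and its independent centered noise part $\tfrac{2\Vert\psi\Vert_\infty}{\alpha h}W_{1j}$, and using that for a.e.\ $X_1$ at most one $j$ contributes to $\sum_j\mu_j\psi_h(x_j-X_1)$, I get $\Var(\sum_j\mu_j\bar Z_{1j})\le\big(\tfrac{\Vert\psi\Vert_\infty^2}{h^2}+\tfrac{8\Vert\psi\Vert_\infty^2}{\alpha^2h^2}\big)\sum_j\mu_j^2$, the two contributions coming from $\Eb[\psi_h(x_j-X_1)^2]\le\tfrac{\Vert\psi\Vert_\infty^2}{h^2}\int_{B_j}f\le\tfrac{\Vert\psi\Vert_\infty^2}{h^2}$ and from $\Var(W_{1j})=2$. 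For $\alpha\le1$ this is at most $\tfrac{9\Vert\psi\Vert_\infty^2}{\alpha^2h^2}\sum_j\mu_j^2$, and $\Var_{Q_{f}^n}(D)=\tfrac4n\Var(\sum_j\mu_j\bar Z_{1j})$ is then exactly the first term of \eqref{Eq Variance of the chi2stat NI}.

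\emph{The degenerate part $\Var(A)$ and the main obstacle.} Here $\Var_{Q_{f}^n}(A)=\Eb[A^2]$ expands into a quadruple sum over $(j,j')$ and over $(i,k,i',k')$ with $i\neq k,\ i'\neq k'$; independence across individuals together with the centering forces $\{i,k\}=\{i',k'\}$, so only $2n(n-1)$ index tuples survive per pair $(j,j')$ and $\Eb[A^2]=\tfrac{2}{n(n-1)}\sum_{j,j'}c_{jj'}^2$, where $c_{jj'}$ denotes the covariance of $\bar Z_{1j}$ and $\bar Z_{1j'}$. The diagonal satisfies $c_{jj}=\Var(Z_{1j})\le\tfrac{9\Vert\psi\Vert_\infty^2}{\alpha^2h^2}$ (same estimate as above), so $\sum_j c_{jj}^2\le\tfrac{81\Vert\psi\Vert_\infty^4N}{\alpha^4h^4}$; by the structural fact and centeredness of the noise, $c_{jj'}=-[\psi_h\ast f](x_j)[\psi_h\ast f](x_{j'})$ for $j\neq j'$, and since $\sum_j[\psi_h\ast f](x_j)^2\le\tfrac{\Vert\psi\Vert_\infty^2}{h^2}\int_Bf\le\tfrac{\Vert\psi\Vert_\infty^2}{h^2}$ we get $\sum_{j\neq j'}c_{jj'}^2\le\big(\sum_j[\psi_h\ast f](x_j)^2\big)^2\le\tfrac{\Vert\psi\Vert_\infty^4}{h^4}$. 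Adding the two contributions and using $\alpha\le1$, $N\ge1$ gives $\sum_{j,j'}c_{jj'}^2\le\tfrac{82\Vert\psi\Vert_\infty^4N}{\alpha^4h^4}$, hence $\Var_{Q_{f}^n}(A)\le\tfrac{164\Vert\psi\Vert_\infty^4N}{n(n-1)\alpha^4h^4}$, which combined with the bound on $\Var(D)$ establishes \eqref{Eq Variance of the chi2stat NI}. The only genuinely delicate point is the bookkeeping in $\Eb[A^2]$ — enumerating the surviving index patterns and tracking the numerical constants so that they collapse to the claimed $36$ and $164$, which relies on absorbing the lower-order terms via $\alpha\le1$, $N\ge1$ and $\int_{B_j}f\le1$; everything else is routine.
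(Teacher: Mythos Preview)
Your proposal is correct and follows essentially the same approach as the paper: the same decomposition $S_B-\Eb[S_B]=A+D$ (the paper's $\widehat U_B+\widehat V_B$), the same use of the disjoint-support structure of the $\psi_h(x_j-\cdot)$, and the same bounds $\int_{B_j}f\le 1$, $\alpha\le 1$ to absorb lower-order terms into the constants $36$ and $164$. The only cosmetic difference is in the bookkeeping for $\Var(A)$: the paper expands $h(Z_1,Z_2)$ into four data/noise pieces $\tilde T_1,\dots,\tilde T_4$ and bounds each variance separately, whereas you write $\Eb[h(Z_1,Z_2)^2]=\sum_{j,j'}c_{jj'}^2$ directly and bound the diagonal and off-diagonal entries of the covariance matrix $(c_{jj'})$; both routes land on exactly the same numerical constant.
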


The study of the first and second type error probabilities of the test $\Phi$ in (\ref{Non Interactive Test procedure}) with a convenient choice of $h$ leads to the following upper bound on $\Ec_{n,\alpha}^\text{NI}(f_0,\gamma)$. 
\begin{theorem}\label{Thrm Upper Bound NI}
Assume that $\alpha\in(0,1)$ and $\beta\leq 1$.
The test procedure $\Phi$ in \eqref{Non Interactive Test procedure} with $t_1$ and $t_2$ in \eqref{Eq Non Interactive t_1 and t_2} and bandwidth $h$ given by $h\asymp \vert B \vert^{-1/(4\beta+3)}(n\alpha^2)^{-2/(4\beta+3)}$ attains the following bound on the separation rate
$$
\Ec_{n,\alpha}^\text{NI}(f_0,\gamma)\leq C(L,\gamma,\psi)\cdot \left\{\vert B \vert^{\frac{3\beta+3}{4\beta+3}}(n\alpha^2)^{-\frac{2\beta}{4\beta+3}} + \int_{\overline B} f_0 + \frac 1{\sqrt{n \alpha^2}}\right\},
$$
for all compact set $B\subset \Rb$.
\end{theorem}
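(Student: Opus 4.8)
The plan is to control the two types of error separately, using the moment formulas in Proposition~\ref{Prop Test Stat NI} together with Chebyshev's inequality, and then to optimise over the bandwidth $h$. Throughout, write $r_j := [\psi_h\ast f](x_j)-f_0(x_j)$, so that $\Eb_{Q_f^n}[S_B]=\sum_{j=1}^N r_j^2$ and $\Eb_{Q_f^n}[T_B]=\int_{\bar B}(f-f_0)$. First I would handle the first type error: under $H_0$ we have $f=f_0$, so by Assumption~\ref{Assumption on the kernel} and the H\"older property of $f_0$, each bias term satisfies $|r_j| = |[\psi_h\ast f_0](x_j)-f_0(x_j)| \le L_0 C_\beta h^\beta$, hence $\Eb_{Q_{f_0}^n}[S_B]\le L_0^2 C_\beta^2 N h^{2\beta}$ and $\Var_{Q_{f_0}^n}(S_B)\le 36\|\psi\|_\infty^2 L_0^2 C_\beta^2 N h^{2\beta}/(n\alpha^2 h^2) + 164\|\psi\|_\infty^4 N/(n(n-1)\alpha^4 h^4)$. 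A Chebyshev bound then shows $\Pb_{Q_{f_0}^n}(S_B\ge t_1)\le \gamma/4$ provided $t_1$ exceeds the mean by an appropriate multiple of the standard deviation; the stated $t_1 = \tfrac32 L_0^2 C_\beta^2 N h^{2\beta} + 196\|\psi\|_\infty^2\sqrt N/(\gamma n\alpha^2 h^2)$ is designed exactly so that the $\tfrac12 L_0^2 C_\beta^2 N h^{2\beta}$ margin absorbs $\sqrt{\Var}$ coming from the first variance term (using $ab\le (a^2+b^2)/2$) while the second term of $t_1$ absorbs the contribution of the second variance term. Similarly, under $H_0$, $\Eb_{Q_{f_0}^n}[T_B]=0$ and $\Var_{Q_{f_0}^n}(T_B)=(c_\alpha^2-(\int_{\bar B}f_0)^2)/n\le c_\alpha^2/n\le 5/(n\alpha^2)$ for $\alpha\in(0,1)$ (since $c_\alpha=(e^\alpha+1)/(e^\alpha-1)\le \sqrt5/\alpha$ on that range), so Chebyshev with $t_2=\sqrt{20/(n\alpha^2\gamma)}$ gives $\Pb_{Q_{f_0}^n}(T_B\ge t_2)\le\gamma/4$. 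A union bound yields $\Pb_{Q_{f_0}^n}(\Phi=1)\le\gamma/2$.

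Next I would treat the second type error. Fix $f\in H(\beta,L)$ with $\|f-f_0\|_1\ge\rho$. Decompose $\rho \le \|f-f_0\|_1 = \int_B|f-f_0| + \int_{\bar B}|f-f_0|$. If the tail part is large, say $\int_{\bar B}|f-f_0|\ge \rho/2$, I would argue that $T_B$ detects it: we have $\Eb[T_B]=\int_{\bar B}(f-f_0)$, but this need not equal $\int_{\bar B}|f-f_0|$. Here one uses the classical trick that $\int_{\bar B}(f-f_0) = \int_{\bar B}|f-f_0| - 2\int_{\bar B}(f_0-f)_+ \ge \int_{\bar B}|f-f_0| - 2\int_{\bar B}f_0$, so if additionally $\rho$ dominates $\int_{\bar B}f_0$ (which is guaranteed by the third term in the claimed bound, up to constants) then $\Eb[T_B]\gtrsim\rho$, and since $\Var(T_B)\le c_\alpha^2/n\lesssim 1/(n\alpha^2)$, Chebyshev makes $\Pb_{Q_f^n}(T_B< t_2)\le\gamma/2$ once $\rho$ is a large enough multiple of $t_2\asymp 1/\sqrt{n\alpha^2}$. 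If instead the bulk part is large, $\int_B|f-f_0|\ge\rho/2$, I would relate $\sum_j r_j^2$ to $\int_B(f-f_0)^2$ and then to $(\int_B|f-f_0|)^2/|B|$ by Cauchy--Schwarz. The link between $\sum_j r_j^2$ and $\int_B(f-f_0)^2$ is where the kernel enters: writing $r_j=[\psi_h\ast(f-f_0)](x_j) + ([\psi_h\ast f_0](x_j)-f_0(x_j))$, the second piece is a bias bounded by $L_0 C_\beta h^\beta$, and for the first piece one uses that on each cell $B_j$ of length $2h$, $[\psi_h\ast(f-f_0)](x_j)$ is (up to a H\"older bias of order $(L+L_0)C_\beta h^\beta$) a local average of $f-f_0$ over $B_j$; summing $2h\cdot [\psi_h\ast(f-f_0)](x_j)^2$ over $j$ and applying Jensen per cell gives $\sum_j r_j^2 \gtrsim \tfrac1{2h}\int_B(f-f_0)^2 - (\text{bias terms of order } N h^{2\beta})$, hence $\Eb[S_B]\gtrsim \tfrac{1}{2h}\cdot\tfrac{(\rho/2)^2}{|B|} - c\,L^2 C_\beta^2 N h^{2\beta}$. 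With $N\asymp |B|/h$, this is $\gtrsim \rho^2/(h|B|) - c L^2 C_\beta^2 |B| h^{2\beta-1}$. One then checks $\Eb[S_B]\ge t_1 + c'\sqrt{\Var(S_B)}$, so Chebyshev gives $\Pb_{Q_f^n}(S_B<t_1)\le\gamma/2$, provided $\rho$ is large compared to the three quantities $\sqrt{|B| h^{2\beta-1}\cdot h|B|}=|B|h^\beta$ (from the bias), $\sqrt{(h|B|)\cdot \sqrt N/(n\alpha^2 h^2)}\asymp |B|^{3/4} h^{-1/4}(n\alpha^2)^{-1/2}$ (from $t_1$'s second term and the second variance term), and the first variance term (which, by $ab\le a^2+b^2$, is subsumed).

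Finally I would optimise $h$. The two competing requirements on $\rho$ from the bulk analysis are $\rho\gtrsim |B|h^\beta$ (decreasing in... increasing in $h$) and $\rho\gtrsim |B|^{3/4}h^{-1/4}(n\alpha^2)^{-1/2}$ (decreasing in $h$). Balancing, $|B|h^\beta \asymp |B|^{3/4}h^{-1/4}(n\alpha^2)^{-1/2}$ gives $h^{\beta+1/4}\asymp |B|^{-1/4}(n\alpha^2)^{-1/2}$, i.e. $h\asymp |B|^{-1/(4\beta+1)}(n\alpha^2)^{-2/(4\beta+1)}$; however, one must also respect the global constraint that $B$ be partitioned into $N\asymp|B|/h$ cells with $h$ not too large, and more importantly check against the variance terms carefully --- redoing the balance with the exact exponent $4\beta+3$ appearing in the theorem, the relevant trade-off is between $|B|h^\beta$ and $|B|^{3/4}(h(n\alpha^2))^{-1/2}\cdot$(powers), which solves to $h\asymp|B|^{-1/(4\beta+3)}(n\alpha^2)^{-2/(4\beta+3)}$ as stated, yielding $|B|h^\beta\asymp |B|^{(3\beta+3)/(4\beta+3)}(n\alpha^2)^{-2\beta/(4\beta+3)}$. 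Collecting the bulk bound, the tail requirement $\rho\gtrsim\int_{\bar B}f_0$, and the tail-test requirement $\rho\gtrsim1/\sqrt{n\alpha^2}$ proves the theorem with a constant $C(L,\gamma,\psi)$ depending only on $L, \gamma$ and $\|\psi\|_\infty, C_\beta$. I expect the \textbf{main obstacle} to be the bulk lower bound $\Eb[S_B]=\sum_j r_j^2 \gtrsim \rho^2/(h|B|)$ --- that is, showing the discretised, kernel-smoothed, squared deviations do not lose too much relative to $\int_B(f-f_0)^2$ and hence relative to $(\int_B|f-f_0|)^2/|B|$; this requires carefully tracking H\"older-bias terms on each cell and ensuring they are dominated by the signal, and it is the step that dictates the precise exponent in the choice of $h$ and the appearance of $|B|$ in a purely global way.
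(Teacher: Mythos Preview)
Your overall architecture matches the paper's: Chebyshev for both error types, the tail identity $\int_{\bar B}(f-f_0)\ge\int_{\bar B}|f-f_0|-2\int_{\bar B}f_0$, and a bulk step linking $\sum_j r_j^2$ to $\int_B|f-f_0|$ followed by bandwidth optimisation. Two points deserve correction.

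\emph{The ``Jensen per cell'' step goes the wrong way.} Jensen (or Cauchy--Schwarz) on each $B_j$ gives an \emph{upper} bound $[\psi_h\ast(f-f_0)](x_j)^2\lesssim \tfrac1{2h}\int_{B_j}(f-f_0)^2$, not the lower bound you need. The paper avoids this by staying at the $L_1$ level: it shows directly, via the triangle inequality and the three H\"older biases $|f(x)-f(x_j)|$, $|f(x_j)-\psi_h\ast f(x_j)|$, $|f_0(x_j)-f_0(x)|$, that
\[
\Bigl|\int_B|f-f_0|-\sum_{j=1}^N 2h\,|r_j|\Bigr|\le (1+C_\beta+L_0/L)\,L\,h^\beta|B|,
\]
and only then applies Cauchy--Schwarz in the form $(\sum_j|r_j|)^2\le N\sum_j r_j^2$. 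This yields $\sum_j r_j^2\ge \tfrac{1}{4Nh^2}\bigl(\int_B|f-f_0|-C\,|B|h^\beta\bigr)^2$ in one clean stroke. Your $L_2$ detour can be salvaged, but only by a pointwise Riemann-sum argument using $|(f-f_0)(x)-(f-f_0)(x_j)|\le (L+L_0)(2h)^\beta$ on each cell, not by Jensen.

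\emph{The bandwidth balance contains an arithmetic slip.} From $\rho^2\gtrsim 4Nh^2\cdot t_1$ with $t_1\asymp \sqrt{N}/(n\alpha^2 h^2)$ and $N\asymp|B|/h$ you get $\rho\gtrsim N^{3/4}/\sqrt{n\alpha^2}\asymp |B|^{3/4}h^{-3/4}(n\alpha^2)^{-1/2}$, not $h^{-1/4}$. Balancing $|B|h^\beta\asymp|B|^{3/4}h^{-3/4}(n\alpha^2)^{-1/2}$ then gives $h^{\beta+3/4}\asymp|B|^{-1/4}(n\alpha^2)^{-1/2}$, i.e.\ exactly $h\asymp|B|^{-1/(4\beta+3)}(n\alpha^2)^{-2/(4\beta+3)}$, so there is no need to ``redo the balance'': once the exponent on $h$ is corrected the stated $4\beta+3$ falls out immediately.
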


The proof can be found in Appendix \ref{App Proof Upper Bound NI}.
Note that the tightest upper bound is obtained for the sets $B$ that minimize the right-hand sides in Theorem \ref{Thrm Upper Bound NI}.
In order to do this, we note that the upper bounds sum  a term which increases with $B$, a term which decreases with $B$:  $\int_{\overline{B}}f_0$ and a term $1/\sqrt{n \alpha^2}$ free of $B$. Thus we suggest to choose $B = B_{n,\alpha}$ as a level set
\begin{equation}\label{optB}
B_{n,\alpha} \in \arg \inf_{B \text{ compact set}}
\left\{|B|: \int_{\overline{B}}{ f_0} \geq \vert B \vert^{\frac{3\beta+3}{4\beta+3}}(nz_\alpha^2)^{-\frac{2\beta}{4\beta+3}} + \frac 1{\sqrt{n\alpha^2}} \text{ and } \inf_B f_0 \geq \sup_{\overline{B}} f_0 
\right\}. 
\end{equation}
\subsection{Lower bound in the non-interactive scenario}

We now complete the study of the testing radius $\Ec^{\text{NI}}_{n,\alpha}(f_0,\gamma)$ with the following lower bound.

\begin{theorem}\label{Thrm Lower bound NI}
Let $\alpha>0$. Assume that $\beta\leq 1$. Set $z_\alpha=e^{2\alpha}-e^{-2\alpha}$ and  $C_0(B)=\min\{f_0(x) : x\in B\}$. 
For all compact set $B\subset \Rb$ we get
$$
\Ec^{\text{NI}}_{n,\alpha}(f_0,\gamma)\geq C(\gamma,L,L_0)\left[\log\left(C \vert B\vert^{\frac{4\beta+4}{4\beta+3}}(nz_\alpha^2)^\frac{2}{4\beta+3}  \right)\right]^{-1}
\min \left\{ |B| C_0(B), 
\vert B \vert^{\frac{3\beta+3}{4\beta+3}}(nz_\alpha^2)^{-\frac{2\beta}{4\beta+3}}\right\}.
$$
If, moreover, the compact set $B$ is satisfying 
\begin{equation}\label{Eq. Condition on B for NI Lower bound}
\vert B \vert^{\beta/(4\beta+3)}C_0(B) \geq C(nz_\alpha^2)^{-2\beta/(4\beta+3)}
\end{equation}
for some $C>0$, it holds
$$
\Ec^{\text{NI}}_{n,\alpha}(f_0,\gamma)\geq C(\gamma,L,L_0)\left[\log\left(C \vert B\vert^{\frac{4\beta+4}{4\beta+3}}(nz_\alpha^2)^\frac{2}{4\beta+3}  \right)\right]^{-1}\vert B \vert^{\frac{3\beta+3}{4\beta+3}}(nz_\alpha^2)^{-\frac{2\beta}{4\beta+3}}.
$$
\end{theorem}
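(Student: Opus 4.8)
The plan is to use the standard two-point / mixture reduction for testing lower bounds, adapted to the local privacy setting via the information-contraction inequalities available for $\alpha$-LDP channels. Concretely, I would construct a family of alternatives $f_\theta = f_0 + \gamma \rho\, \sum_{\ell} \theta_\ell \, g_\ell$, where $g_\ell$ are bump perturbations supported on disjoint subintervals of $B$ of width $\asymp h$, $\theta = (\theta_\ell) \in \{-1,+1\}^m$, and $\rho$ is the candidate separation radius. The perturbations must be chosen so that (i) $f_\theta \geq 0$ and $f_\theta \in H(\beta, L)$ — this forces the bump heights to be $\lesssim h^\beta$ and, for positivity, $\lesssim C_0(B)$, which is exactly where the two terms in the $\min\{|B| C_0(B), \dots\}$ come from; (ii) $\|f_\theta - f_0\|_1 \gtrsim \rho$, which fixes the relationship between the number of bumps $m \asymp |B|/h$, the height $\asymp \rho/(m h) \asymp \rho/|B|$, and the constraints in (i). Balancing (i) gives the bandwidth $h \asymp |B|^{-1/(4\beta+3)}(n z_\alpha^2)^{-2/(4\beta+3)}$ announced in the upper bound, and then $\rho \asymp |B|^{(3\beta+3)/(4\beta+3)}(n z_\alpha^2)^{-2\beta/(4\beta+3)}$ on the "smooth-regime" branch, capped by $|B| C_0(B)$ on the positivity-constrained branch.

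The core of the argument is to lower bound the testing risk by $1 - \text{(some distance between the mixture and the null, under privatization)}$. Since any non-interactive $\alpha$-LDP channel $Q \in \mathcal{Q}_\alpha^{\text{NI}}$ acts independently on each $X_i$, I would bound the $\chi^2$-divergence (or KL) between $\frac{1}{2^m}\sum_\theta Q_{f_\theta}^n$ and $Q_{f_0}^n$. The key tool is the local-privacy data-processing bound (of the Duchi–Jordan–Wainwright / "$z_\alpha^2$" type) which replaces the $e^\alpha$-Lipschitz constraint by an effective multiplier $z_\alpha^2 = (e^{2\alpha}-e^{-2\alpha})^2$-type factor: for product channels, $\chi^2\big(\frac{1}{2^m}\sum_\theta Q_{f_\theta}^n \,\|\, Q_{f_0}^n\big)$ is controlled, after tensorization and Ingster's symmetrization over $\theta$, by something like $\prod_i \big(1 + C z_\alpha^2 \sum_{\ell} (\text{contribution of bump } \ell \text{ through } Q_i)^2\big) - 1$. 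The per-coordinate per-bump contribution is the squared $L_2$-type norm $\int g_\ell^2 / f_0 \asymp (\rho/|B|)^2 \cdot h \cdot$ (a $1/f_0$ factor, which on $B$ is $\lesssim 1/C_0(B)$ but is in fact handled by taking $f_0$ roughly constant on each bump's support, as in \cite{Balakrishnan_Wasserman_2019_hypothesisTesting}). Summing over $\ell$ and $i$ and requiring this to be $O(1)$ is what produces the separation rate; the bumps being disjoint kills the cross terms in the symmetrization, which is why the U-statistic / $\chi^2$ heuristic from Proposition \ref{Prop Test Stat NI} transfers cleanly to the lower bound.

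The logarithmic factor $[\log(C|B|^{(4\beta+4)/(4\beta+3)}(n z_\alpha^2)^{2/(4\beta+3)})]^{-1}$ in the statement is the tell-tale sign that a plain $\chi^2$ bound is \emph{not} quite enough: a naive second-moment argument would lose this log. The fix — which I expect to be the main technical obstacle — is to truncate the likelihood ratio, i.e. work on a high-probability event where the privatized scores are not too large, following the "truncated $\chi^2$" or Cauchy–Schwarz-on-a-good-event device; the number of bumps $m \asymp |B|/h \asymp |B|^{(4\beta+4)/(4\beta+3)}(n z_\alpha^2)^{2/(4\beta+3)}$ enters the truncation level, and controlling the tail of a sum of $m$ near-independent privatized bump-statistics costs exactly one $\log m$. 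One has to do this truncation carefully so that it is compatible with the $\alpha$-LDP constraint on $Q$ (the adversary picks $Q$, so the truncation must be stated in terms of quantities the channel cannot inflate beyond the $e^\alpha$ / $z_\alpha$ budget). Once the truncated-divergence bound gives $\mathcal{R}_{n,\alpha}^{\text{NI}}(f_0,\rho) \geq 1-\gamma$ for $\rho$ up to the claimed rate divided by $\log m$, the theorem follows; the second displayed inequality is just the first one under condition \eqref{Eq. Condition on B for NI Lower bound}, which is precisely the condition ensuring the $|B|C_0(B)$ term is not the active one in the minimum (so that the positivity constraint is slack and the smooth-regime rate $|B|^{(3\beta+3)/(4\beta+3)}(n z_\alpha^2)^{-2\beta/(4\beta+3)}$ governs).
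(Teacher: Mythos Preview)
Your proposal has a genuine structural gap: the claim that ``the bumps being disjoint kills the cross terms in the symmetrization'' is false under privatization. Disjoint supports of the $g_\ell$ in $\mathcal{X}$ do \emph{not} make their images under the channel $Q_i$ orthogonal in $L_2(\mathcal{Z}_i)$; after Ingster's symmetrization you are left with $\mathbb{E}_{\nu,\nu'}\exp\big(n\delta^2\,\nu^{\!\top} M\nu'\big)$ where $M_{jk}=\langle K\psi_j,\psi_k\rangle$ and $K=\tfrac1n\sum_i K_i^\star K_i$, and the adversary can choose $Q$ so that $M$ is far from diagonal (e.g.\ a channel that reports a single noisy linear functional of the bumps concentrates all of $\mathrm{tr}\,M\le z_\alpha^2|B|$ into one eigenvalue). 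Working this through, a $Q$-independent fixed-bump prior only delivers a strictly worse exponent in $n\alpha^2$ (essentially the \emph{interactive} rate, which is exactly what a DJW-type KL/TV contraction gives). The paper's fix is to make the prior $Q$-adapted: it diagonalizes $K$ on $W_N=\mathrm{span}\{\psi_j\}$, takes the eigenfunctions $(v_j,\lambda_j^2)$, and perturbs by $\delta\sum_j \nu_j v_j/\tilde\lambda_j$ with $\tilde\lambda_j=\max\{\lambda_j/z_\alpha,\sqrt{2h}\}$. This eigenvalue rescaling is the crucial step that turns the $\chi^2$ into $\prod_j\cosh(n\delta^2 z_\alpha^2)\le \exp(Nn^2\delta^4 z_\alpha^4/2)$ uniformly in $Q$, and it is what produces the $N^{3/4}/\sqrt{n z_\alpha^2}$ term and hence the $(4\beta+3)$ exponent.

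Your diagnosis of the logarithm is also off. The $\chi^2$ bound in the paper is \emph{untruncated} and perfectly clean; no truncation of the likelihood ratio is used. The $\log N$ enters because the eigenfunctions $v_j=\sum_k a_{kj}\psi_k$ mix the bumps through an orthogonal matrix $(a_{kj})$, so on each $B_k$ the perturbation has amplitude $\delta\big|\sum_j \nu_j a_{kj}/\tilde\lambda_j\big|$, and one needs Hoeffding plus a union bound over $k\in\llbr 1,N\rrbr$ to keep this uniformly $\lesssim 1/\sqrt{h}$ on a set $A_b\subset\{-1,1\}^N$ of $\nu$'s with $\Pb_\nu(A_b)\ge 1-b$. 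That costs one $\sqrt{\log N}$ in the constraint on $\delta$ (for positivity and $H(\beta,L)$-membership of $f_\nu$) and another $\sqrt{\log N}$ when converting the exact $L_2$-type quantity $\sum_j 1/\tilde\lambda_j^2$ into the $L_1$ separation via $|a_k|\ge a_k^2/M$ on $A_b$; together they give the $[\log N]^{-1}$ in the theorem, with $N\asymp |B|^{(4\beta+4)/(4\beta+3)}(nz_\alpha^2)^{2/(4\beta+3)}$ after optimizing $h$. The second displayed inequality is indeed just the first under~\eqref{Eq. Condition on B for NI Lower bound}, as you say.
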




\textbf{Discussion of the optimality of the bounds.} The choice of the set $B$ is crucial for obtaining matching rates in the upper and lower bounds.\\
In the case where the support $\Xc$ of $f_0$ is compact with $c_1\leq \vert \Xc \vert\leq c_2$ for two constants $c_1>0$ and $c_2>0$ and if $f_0$ is bounded from below on $\Xc$, one can take $B=\Xc$. Indeed, for such functions, the choice $B=\Xc$ yields an upper bound of order $(n\alpha^2)^{-\frac{2\beta}{4\beta+3}}$. Moreover, \eqref{Eq. Condition on B for NI Lower bound} holds with this choice of $B$ and Theorem~\ref{Thrm Lower bound NI} proves that the upper bound is optimal up to (at most) a logarithmic factor.\\
In the case of densities with bounded support but which can tend to $0$ on their support, and in the case of densities with unbounded support, we suggest to choose $B=B_{n,\alpha}$ as defined in (\ref{optB}) both in the upper and lower bounds.

By inspection of the proof, we can also write that $B_{n,\alpha}$ in (\ref{optB}) is such that
$$
B_{n,\alpha} \in \arg \inf_{B \text{ compact set}}
\left\{|B|: \int_{\overline{B}}{ f_0} \geq \psi_{n,\alpha}(B)  \text{ and } \inf_B f_0 \geq \sup_{\overline{B}} f_0  \right\},
$$
where $\psi_{n,\alpha}(B)  = |B| h^\beta + \frac{\vert B\vert^{3/4}}{h^{3/4}\sqrt{n\alpha^2}}+\frac{1}{\sqrt{n\alpha^2}} = \vert B \vert^{\frac{3\beta+3}{4\beta+3}}(n\alpha^2)^{-\frac{2\beta}{4\beta+3}} + \frac 1{\sqrt{n\alpha^2}}$ for an optimal choice of $h=h^*(B) = (|B|^{1/2} n\alpha^2)^{-2/(4 \beta +3)}$.
Indeed,  we  choose $B_{n,\alpha}$ as a level set such that $\int_{\overline{B}} f_0$ (which is decreasing with $B$) be equal to $\psi_{n,\alpha}(B)$ (which is increasing with $B$). For the choices $B=B_{n,\alpha}$ and $h=h^*(B_{n,\alpha})$ we thus obtain an upper bound on $\Ec^{\text{NI}}_{n,\alpha}(f_0,\gamma)$ of order 
$$
\vert B_{n,\alpha} \vert^{\frac{3\beta+3}{4\beta+3}}(n\alpha^2)^{-\frac{2\beta}{4\beta+3}} + \frac 1{\sqrt{n \alpha^2}}.
$$
Recall that $f_0$ is a H\"older smooth function and thus uniformly bounded. {Moreover, $\psi_{n,\alpha}(B)$ and $\int_{\overline{B}} f_0$ are continuous quantities of the length of the set $B$ when it varies in the family of level sets.} Thus, for small rates $\psi_{n,\alpha}(B_{n,\alpha})$ we have necessarily $\int_{B_{n,\alpha}} f_0$ that does not tend to 0, hence $|B_{n,\alpha}|$ does not tend to 0.
Then the term $\vert B_{n,\alpha} \vert^{\frac{3\beta+3}{4\beta+3}}(n\alpha^2)^{-\frac{2\beta}{4\beta+3}}$ will be dominant.\\
The following proposition gives a sufficient condition so that our upper and lower bounds match up to a logarithmic factor.
\begin{proposition}
Let $B_{n,\alpha}$ be defined by \eqref{optB}. If there exists a compact set $K\subset \overline{B}_{n,\alpha}$ and some $c\in]0,1[$ such that 
    \begin{equation}\label{Eq. Condition on K}
    \int_K f_0\geq c\int_{\overline{B}_{n,\alpha}}f_0 \quad \text{and} \quad c\frac{\vert B_{n,\alpha}\vert}{\vert K\vert} \gtrsim 1,
    \end{equation}
    then it holds 
    $$\left[\log\left( \vert B_{n,\alpha}\vert^{\frac{4\beta+4}{4\beta+3}}(n\alpha^2)^\frac{2}{4\beta+3}  \right)\right]^{-1}\vert B_{n,\alpha} \vert^{\frac{3\beta+3}{4\beta+3}}(n\alpha^2)^{-\frac{2\beta}{4\beta+3}} \lesssim \Ec_{n,\alpha}^\text{NI}(f_0,\gamma) \lesssim \vert B_{n,\alpha} \vert^{\frac{3\beta+3}{4\beta+3}}(n\alpha^2)^{-\frac{2\beta}{4\beta+3}}.$$
\end{proposition}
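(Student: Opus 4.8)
The plan is to derive the right-hand (upper) inequality directly from Theorem~\ref{Thrm Upper Bound NI} and the left-hand (lower) one from Theorem~\ref{Thrm Lower bound NI}, in both cases specialised to $B=B_{n,\alpha}$; the only new ingredient is a lower bound on $C_0(B_{n,\alpha})=\min_{B_{n,\alpha}}f_0$, which is exactly the role of assumption~\eqref{Eq. Condition on K}. Throughout I would use that $z_\alpha=e^{2\alpha}-e^{-2\alpha}=2\sinh(2\alpha)\asymp\alpha$ for $\alpha\in(0,1)$, so that all rates and logarithmic arguments written with $z_\alpha$ or with $\alpha$ agree up to multiplicative constants.

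For the upper bound, Theorem~\ref{Thrm Upper Bound NI} applied with $B=B_{n,\alpha}$ and $h=h^*(B_{n,\alpha})$ gives
$$\Ec_{n,\alpha}^{\text{NI}}(f_0,\gamma)\lesssim |B_{n,\alpha}|^{\frac{3\beta+3}{4\beta+3}}(n\alpha^2)^{-\frac{2\beta}{4\beta+3}}+\int_{\overline{B}_{n,\alpha}}f_0+\frac{1}{\sqrt{n\alpha^2}}.$$
By construction $B_{n,\alpha}$ is a level set at which the constraint in~\eqref{optB} is saturated, hence $\int_{\overline{B}_{n,\alpha}}f_0\asymp|B_{n,\alpha}|^{\frac{3\beta+3}{4\beta+3}}(n\alpha^2)^{-\frac{2\beta}{4\beta+3}}+(n\alpha^2)^{-1/2}$. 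Since $\frac{2\beta}{4\beta+3}<\frac12$ and, as explained in the discussion preceding the proposition, $|B_{n,\alpha}|$ is bounded away from $0$ (its mass $\int_{B_{n,\alpha}}f_0$ cannot vanish because $f_0$ is bounded and the rate is small), both $\int_{\overline{B}_{n,\alpha}}f_0$ and $(n\alpha^2)^{-1/2}$ are dominated by $|B_{n,\alpha}|^{\frac{3\beta+3}{4\beta+3}}(n\alpha^2)^{-\frac{2\beta}{4\beta+3}}$, which yields the claimed upper bound.

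For the lower bound, I would apply Theorem~\ref{Thrm Lower bound NI} with $B=B_{n,\alpha}$ and show that both terms of the minimum there are $\gtrsim|B_{n,\alpha}|^{\frac{3\beta+3}{4\beta+3}}(n\alpha^2)^{-\frac{2\beta}{4\beta+3}}$. For the term $|B_{n,\alpha}|^{\frac{3\beta+3}{4\beta+3}}(nz_\alpha^2)^{-\frac{2\beta}{4\beta+3}}$ this is immediate from $z_\alpha\asymp\alpha$. For the term $|B_{n,\alpha}|C_0(B_{n,\alpha})$, the level-set property gives $C_0(B_{n,\alpha})=\inf_{B_{n,\alpha}}f_0\ge\sup_{\overline{B}_{n,\alpha}}f_0\ge\sup_K f_0\ge|K|^{-1}\int_K f_0\ge c|K|^{-1}\int_{\overline{B}_{n,\alpha}}f_0$; multiplying by $|B_{n,\alpha}|$ and using $c|B_{n,\alpha}|/|K|\gtrsim1$ together with the saturated constraint in~\eqref{optB} gives $|B_{n,\alpha}|C_0(B_{n,\alpha})\gtrsim\int_{\overline{B}_{n,\alpha}}f_0\gtrsim|B_{n,\alpha}|^{\frac{3\beta+3}{4\beta+3}}(n\alpha^2)^{-\frac{2\beta}{4\beta+3}}$. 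Finally, since $z_\alpha\asymp\alpha$ the argument of the logarithm in Theorem~\ref{Thrm Lower bound NI} is comparable to $|B_{n,\alpha}|^{\frac{4\beta+4}{4\beta+3}}(n\alpha^2)^{\frac{2}{4\beta+3}}$ (the extra multiplicative constant being harmless inside the log since this quantity diverges), and the lower bound follows.

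The step I expect to be the crux is the lower bound on $C_0(B_{n,\alpha})$: a priori $f_0$ may decay to $0$ near the boundary of the level set $B_{n,\alpha}$, in which case $|B|C_0(B)$ is negligible for $B=B_{n,\alpha}$ and the first term in the minimum of Theorem~\ref{Thrm Lower bound NI} carries no information, so one genuinely needs~\eqref{Eq. Condition on K}. That condition is tailored to force a fixed proportion of the tail mass $\int_{\overline{B}_{n,\alpha}}f_0$ onto a set $K$ of length at most of order $|B_{n,\alpha}|$; combined with the level-set inequality $\inf_{B_{n,\alpha}}f_0\ge\sup_{\overline{B}_{n,\alpha}}f_0\ge\sup_K f_0\ge|K|^{-1}\int_K f_0$, this transfers a lower bound on an average of $f_0$ over $K$ into a lower bound on the infimum of $f_0$ over $B_{n,\alpha}$. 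Everything else reduces to the elementary estimates $z_\alpha\asymp\alpha$ and $|B_{n,\alpha}|\gtrsim1$ and to tracking exponents.
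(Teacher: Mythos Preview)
Your proposal is correct and takes essentially the same approach as the paper: both reduce the lower bound to verifying condition~\eqref{Eq. Condition on B for NI Lower bound} at $B=B_{n,\alpha}$ via the chain $\int_K f_0\le|K|\sup_K f_0\le|K|\inf_{B_{n,\alpha}}f_0$ combined with $\int_K f_0\ge c\int_{\overline{B}_{n,\alpha}}f_0\ge c\,\psi_{n,\alpha}(B_{n,\alpha})$ and $c|B_{n,\alpha}|/|K|\gtrsim1$. Your formulation $|B_{n,\alpha}|C_0(B_{n,\alpha})\gtrsim\int_{\overline{B}_{n,\alpha}}f_0$ is a trivially equivalent rearrangement of the paper's $\inf_{B_{n,\alpha}}f_0\gtrsim(h^*(B_{n,\alpha}))^\beta$.
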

\begin{proof} Indeed, if $K$ satisfies (\ref{Eq. Condition on K}),  then it holds
$$\int_K f_0\leq \vert K\vert \sup_{K} f_0\leq \vert K\vert \sup_{\overline{B}_{n,\alpha}} f_0 \leq \vert K\vert \inf_{B_{n,\alpha}} f_0,$$
and
$$\int_K f_0\geq c\int_{\overline{B}_{n,\alpha}}f_0 \geq c\psi_{n,\alpha}(B_{n,\alpha})\geq c\vert B_{n,\alpha} \vert \left(h^*(B_{n,\alpha}) \right)^\beta\gtrsim \vert K \vert \left(h^*(B_{n,\alpha}) \right)^\beta,$$
which yields $\inf_{B_{n,\alpha}} f_0\gtrsim  \left(h^*(B_{n,\alpha}) \right)^\beta$, and condition \eqref{Eq. Condition on B for NI Lower bound} is thus satisfied with $B=B_{n,\alpha}$. 
Thus, the choice $B=B_{n,\alpha}$ ends the proof of the proposition.
\end{proof}
Let us now discuss a sufficient condition for the existence of a compact set $K\subset \overline{B}_{n,\alpha}$ satisfying $\eqref{Eq. Condition on K}$.
Let us consider the special case of decreasing densities $f_0$ with support $\Xc=[0,+\infty)$. Note that for such functions, $B_{n,\alpha}$ takes the form $B_{n,\alpha}=[0,a]$. Writing $f_0(x)=\ell(x)/(1+x)$, a sufficient condition for the existence of a compact set $K\subset \overline{B}_{n,\alpha}$ satisfying $\eqref{Eq. Condition on K}$ is that $$\sup_{x\geq 1}\frac{\ell(tx)}{\ell(x)}\leq c $$
for some constant $c<1$ and some $t>1$. Indeed, in this case, taking $K=[a,ta]$, it holds $c\vert B_{n,\alpha}\vert /\vert K \vert =c/t$, and 
\begin{align*}
    \int_{ta}^\infty f_0&\leq c\int_{ta}^\infty  \frac{\ell(x/t)}{1+x}\dd x = ct\int_a^\infty \frac{\ell(u)}{1+tu}\dd u \leq c \left( \sup_{x\geq a}\frac{t(1+x)}{1+tx} \right)\int_a^\infty \frac{\ell(u)}{1+u}\dd u\\
    &\leq c\left(1+\frac{t-1}{1+ta} \right)\int_a^\infty f_0,
\end{align*}
and thus $$\frac{\int_K f_0}{\int_{\overline{B}_{n,\alpha}}f_0}=1-\frac{\int_{ta}^\infty f_0}{\int_a^\infty f_0}\geq 1-c\{1+o(1)\}, $$
and $\eqref{Eq. Condition on K}$ is satisfied if $a$ is  large enough. In this case our upper and lower bounds match up to a logarithmic factor.\\
{Note that $f_0$ in Example 5.2 checks the condition for all $t>1$ and the only example where this condition is not satisfied is Example \ref{Ex Slow varying function}. In the latter, the density  $f_0(x)= \frac{A \log(2)^A}{(x+2)\left(\log(x+2)\right)^{A+1}}$, $x\in[0,\infty)$, for some $A>0$ arbitrarily small but fixed, has very slowly decreasing tails. An additional logarithmic factor is lost in the lower bounds in this least favorable case.}
\\

\begin{proof}[Proof of Theorem \ref{Thrm Lower bound NI}]\label{Section Proof Lower Bound NI}
We use the well-known reduction technique. The idea is to build a family $\{f_\nu : \nu\in \Vc\}$ that belong to the alternative set of densities $H_1(\rho)$ and then reduce the test problem to testing between $f_0$  and the mixture of the $f_\nu$.
Our construction of such functions is inspired by the one proposed in \cite{LamWeil_Laurent_Loubes_2020_densityTesting} for goodness-of-fit testing over Besov Balls $\mathcal{B}^s_{2,\infty}$ in the special case where $f_0$ is the uniform distribution over $[0,1]$, and in \cite{Butucea_Rohde_Steinberger_2020} for the minimax estimation over Besov ellipsoids $\Bc^s_{p,q}$ of the integrated square of a density supported in $[0,1]$. However, we need to make some modifications in order to consider Hölder smoothness instead of Besov smoothness and to tackle the case of densities with unbounded support.
Let  $B\subset \Rb$ be a nonempty  compact set, and let $(B_j)_{j=1,\ldots,N}$ be a partition of $B$, $h>0$ be the bandwidth and $(x_1,\ldots,x_N)$ be the centering points, that is $B_j=[x_j-h,x_j+h]$ for all $j\in \llbr 1,N\rrbr$.
Let $\psi : [-1,1]\rightarrow \Rb$ be such that $\psi\in H(\beta,L)$, $\int \psi=0$ and $\int \psi^2=1$.
For $j\in\llbr 1,N\rrbr$, define
$$
\psi_j:t\in\Rb \mapsto \frac{1}{\sqrt{h}}\psi\left(\frac{t-x_j}{h} \right) .
$$
Note that the support of $\psi_j$ is $B_j$, $\int \psi_j=0$ and $(\psi_j)_{j=1,\ldots,N}$ is an orthonormal family.

Fix a privacy mechanism $Q=(Q_1,\ldots, Q_n)\in \Qc_\alpha^{\text{NI}}$.
According to lemma B.3 in \cite{Butucea_Rohde_Steinberger_2020}, we can consider for every $i\in\llbr 1,n\rrbr$  a probability measure $\mu_i$ on $\Zc_i$ and a family of $\mu_i$-densities $(q_i(\cdot \mid x))_{x\in\Rb}$ such that for every $x\in\Rb$ one has $dQ_i(\cdot \mid x)=q_i(\cdot\mid x) d\mu_i$ and $e^{-\alpha }\leq q_i(\cdot\mid x)\leq e^\alpha$.
Denote by $g_{0,i}(z_i)=\int_\Rb q_i(z_i\mid x)f_0(x)\dd x$ the density of $Z_i$ when $X_i$ has density $f_0$.
Define for all $i=1,\ldots,n$ the operator $K_i: L_2(\Rb)\rightarrow L_2(\Zc_i, d\mu_i)$ by
$$
K_i f=\int_\Rb \frac{q_i(\cdot\mid x)f(x)\1_B(x)}{\sqrt{g_{0,i}(\cdot)}}\dd x, \quad f\in L_2(\Rb).
$$
Note that this operator is well-defined since $g_{0,i}(z_i)\geq \int_\Rb e^{-\alpha} f_0(x)\dd x=e^{-\alpha}>0$ for all $z_i$.
Observe that its adjoint operator $K_i^\star$ is given by
$$
K_i^\star : \ell\in  L_2(\Zc_i, d\mu_i)\mapsto \int_{\Zc_i}\frac{\ell(z_i)q_i(z_i\mid \cdot)\1_B(\cdot)}{\sqrt{g_{0,i}(z_i)}}\dd \mu_i(z_i).
$$
Using Fubini's theorem we thus have for all $f\in L_2(\Rb)$
\begin{align*}
 K_i^\star K_i f&=\int_{\Zc_i}\left( \int_\Rb \frac{q_i(z_i\mid y)f(y)\1_B(y)}{\sqrt{g_{0,i}(z_i)}}\dd y \right)\frac{q_i(z_i\mid \cdot)\1_B(\cdot)}{\sqrt{g_{0,i}(z_i)}}\dd \mu_i(z_i)\\
 &=\int_\Rb\left(\int_{\Zc_i} \frac{q_i(z_i\mid y)q_i(z_i\mid \cdot)\1_B(y)\1_B(\cdot)}{g_{0,i}(z_i)}\dd \mu_i(z_i) \right)f(y)\dd y,   
\end{align*}
meaning that $K_i^\star K_i$ is an integral operator with kernel $F_i(x,y)=\int_{\Zc_i} \frac{q_i(z_i\mid x)q_i (z_i\mid y)\1_B(x)\1_B(y)}{g_{0,i}(z_i)}\dd\mu_i(z_i)$.
Define the operator 
$$
K=\frac{1}{n}\sum_{i=1}^n K_i^\star K_i,
$$
which is symmetric and positive semidefinite.
Define also 
$$
W_N=\text{span}\{ \psi_j, j=1,\ldots, N\}.
$$
Let $(v_1,\ldots,v_N)$  be an orthonormal family of eigenfunctions of $K$ as an operator on the linear $L_2(\Rb)$-subspace $W_N$. 
Note that since $v_k$ can be written as a linear combination of the $\psi_j$'s, it holds $\int_{\Rb}v_k=0$ and $\Supp(v_k)\subset B$.
We also denote by $\lambda_1^2,\ldots,\lambda_N^2$ the corresponding eigenvalues. Note that they are non-negative.

\vspace{0.3cm}
Define the functions 
$$f_\nu:x\in\Rb\mapsto f_0(x)+\delta\sum_{j=1}^N\frac{\nu_j}{\tilde{\lambda}_j}v_j(x),$$
where for $j=1,\ldots,N$ $\nu_j\in\{-1,1\}$, $\delta>0$ may depend on $B$,$h$, $N$, $\psi$, $\gamma$, $L$, $L_0$, $\beta$, $n$ and $\alpha$,  and will be specified later, and 
$$
\tilde{\lambda_j}=\max\left\{ \frac{\lambda_j}{z_\alpha}, \sqrt{2h}\right\}, \quad z_\alpha=e^{2\alpha}-e^{-2\alpha}.
$$
The following lemma shows that for $\delta$ properly chosen, for most of the possible $\nu\in\{-1,1\}^N$, $f_\nu$ is a density belonging to $H(\beta,L)$ and $f_\nu$ is sufficiently far away from $f_0$ in a $L_1$ sense.

\begin{lemma}\label{LemmaBSNONI}
Let $\Pb_\nu$ denote the uniform distribution on $\{-1,1\}^N$. 
Let $b>0$.
If the parameter $\delta$  appearing in the definition of $f_\nu$ satisfies
$$
\delta\leq \frac{h}{\sqrt{\log(2N/b)}}\min\left\{ \frac{C_0(B)}{\Vert\psi\Vert_\infty} , \frac{1}{2}\left(1-\frac{L_0}{L} \right)h^\beta\right\},
$$
where $C_0(B):=\min\{f_0(x) : x\in B \}$,  then there exists  a subset $A_b\subseteq \{-1,1\}^N$ with $\Pb_\nu(A_b)\geq 1-b$ such that
\begin{enumerate}[label=\roman*)]
\item $f_\nu\geq 0$ and $\int f_\nu=1$, for all $\nu\in A_b$,
\item $f_\nu\in H(\beta,L)$, for all $\nu\in A_b$,
\item $\Vert f_\nu-f_0\Vert_1\geq \frac{3C_1}{8}\frac{\delta  N }{\sqrt{\log\left( \frac{2N}{b} \right)}}$, for all $\nu\in A_b$, with $C_1=\int_{-1}^1\vert \psi\vert$.
\end{enumerate}
\end{lemma}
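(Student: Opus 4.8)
The plan is to reduce all three claims to a single high-probability event for the Rademacher vector $\nu$, on which i)--iii) hold deterministically, the nontrivial input being a \emph{trace} bound on the operator $K$ extracted from the privacy constraint. First I would expand the perturbation in the orthonormal basis $(\psi_k)_{k=1,\dots,N}$ of $W_N$: writing $v_j=\sum_{k=1}^N a_{jk}\psi_k$, the matrix $A=(a_{jk})$ is orthogonal (change of basis between two orthonormal bases of $W_N$), so
$$
f_\nu-f_0=\delta\sum_{j=1}^N\frac{\nu_j}{\tilde\lambda_j}\,v_j=\delta\sum_{k=1}^N c_k\,\psi_k,\qquad c_k:=\sum_{j=1}^N\frac{a_{jk}}{\tilde\lambda_j}\,\nu_j.
$$
Since the $\psi_k$ have pairwise disjoint supports $B_k$ and $\int\psi_k=0$, this makes the relevant quantities explicit: $\int f_\nu=1$; $\Vert f_\nu-f_0\Vert_1=\delta\sqrt h\,C_1\sum_{k}|c_k|$; for $x\in B_k$, $|f_\nu(x)-f_0(x)|=\delta|c_k|\,|\psi_k(x)|\le\delta|c_k|\,\Vert\psi\Vert_\infty h^{-1/2}$, while $f_\nu\equiv f_0$ on $\overline B$; and, as each $\psi_k$ has Hölder-$\beta$ seminorm $Lh^{-1/2-\beta}$ and (being continuous with support $[-1,1]$) vanishes at the endpoints of $B_k$, a short case analysis over $x,y$ in the same $B_k$, in distinct pieces, or in $\overline B$ (inserting in the mixed cases the endpoint of $B_k$ where $\psi_k$ vanishes) shows that $f_\nu-f_0$ has Hölder-$\beta$ seminorm at most $2\delta\big(\max_k|c_k|\big)Lh^{-1/2-\beta}$.

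Next I would produce the event $A_b$. For fixed $k$, $c_k$ is a Rademacher sum with weights $a_{jk}/\tilde\lambda_j$ and $\sum_j(a_{jk}/\tilde\lambda_j)^2\le(2h)^{-1}\sum_j a_{jk}^2=(2h)^{-1}$ (using $\tilde\lambda_j\ge\sqrt{2h}$ and the unit columns of $A$); Hoeffding's inequality gives $\Pb_\nu(|c_k|>t)\le2e^{-ht^2}$, and a union bound over $k\le N$ at $t=\sqrt{h^{-1}\log(2N/b)}$ yields $A_b:=\{\nu:\ \max_k|c_k|\le\sqrt{h^{-1}\log(2N/b)}\}$ with $\Pb_\nu(A_b)\ge1-b$. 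On $A_b$, the bounds above become $|f_\nu(x)-f_0(x)|\le\delta\Vert\psi\Vert_\infty\sqrt{\log(2N/b)}/h\le C_0(B)\le f_0(x)$ on $B$ by the first term in the hypothesis on $\delta$, which together with $\int f_\nu=1$ gives i); and the Hölder-$\beta$ seminorm of $f_\nu-f_0$ is $\le2L\delta\sqrt{\log(2N/b)}\,h^{-1-\beta}\le L-L_0$ by the second term, so that $f_\nu\in H(\beta,L)$ since $f_0\in H(\beta,L_0)$, i.e.\ ii).

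For iii), on $A_b$ I would use $\sum_k|c_k|\ge(\sum_k c_k^2)/\max_k|c_k|\ge\sqrt h\,(\sum_k c_k^2)/\sqrt{\log(2N/b)}$ together with the identity $\sum_k c_k^2=\Vert A^\top(\nu_j/\tilde\lambda_j)_j\Vert^2=\sum_{j=1}^N\tilde\lambda_j^{-2}$; it then remains to show $\sum_j\tilde\lambda_j^{-2}\ge3N/(8h)$, i.e.\ that $\tilde\lambda_j=\sqrt{2h}$ for at least $3N/4$ of the indices. This is where the privacy constraint is used: with $K=\tfrac1n\sum_iK_i^\star K_i$, using $\Supp(\psi_j)=B_j$ and $\int_{B_j}\psi_j=0$ (so that $q_i(z\mid\cdot)$ may be replaced in the inner integral by its deviation from the midpoint of its range over $B_j$, whose modulus is, by the privacy constraint, $\le\tfrac12(e^\alpha-1)\inf_xq_i(z\mid x)\le\tfrac12(e^\alpha-1)g_{0,i}(z)$), together with $\int_{B_j}|\psi_j|=\sqrt h\,C_1$ and $\int g_{0,i}\dd\mu_i=1$, one obtains
$$
\Vert K_i\psi_j\Vert^2=\int_{\Zc_i}\frac{1}{g_{0,i}(z)}\Big(\int_{B_j}q_i(z\mid x)\psi_j(x)\,\dd x\Big)^{2}\dd\mu_i(z)\ \le\ \tfrac14(e^\alpha-1)^2\,h\,C_1^2 .
$$
Hence $\sum_{j=1}^N\lambda_j^2=\mathrm{tr}(K|_{W_N})=\tfrac1n\sum_i\sum_j\Vert K_i\psi_j\Vert^2\le\tfrac14N(e^\alpha-1)^2hC_1^2$, and since $e^\alpha-1\le z_\alpha$ and $C_1^2\le2\int_{-1}^1\psi^2=2$, Markov's inequality gives $\#\{j:\lambda_j^2>2hz_\alpha^2\}\le N/4$. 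Therefore $\tilde\lambda_j=\sqrt{2h}$ for at least $3N/4$ indices, $\sum_j\tilde\lambda_j^{-2}\ge\tfrac{3N}{4}\cdot\tfrac1{2h}=\tfrac{3N}{8h}$, and plugging this in yields $\Vert f_\nu-f_0\Vert_1\ge\tfrac{3C_1}{8}\,\delta N/\sqrt{\log(2N/b)}$ on $A_b$, which is iii).

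The main obstacle is this last step: one cannot control the \emph{largest} eigenvalue $\lambda_1^2$ (which can be of order $|B|\gg h$), so the argument must pass through the averaged (trace) bound, the essential mechanism being that the differential-privacy constraint, combined with $\int_{B_j}\psi_j=0$ and the localisation $\Supp(\psi_j)=B_j$, keeps the total spectral mass $\sum_j\lambda_j^2$ small enough that a fixed proportion of the regularised eigenvalues saturate at the floor $\sqrt{2h}$; the choice $z_\alpha=e^{2\alpha}-e^{-2\alpha}$ and the factor $\tfrac12$ gained by centring at the midpoint are exactly what make the constant come out to $\tfrac{3C_1}{8}$. The only other slightly delicate bookkeeping is the Hölder estimate across distinct pieces $B_k,B_{k'}$, handled by the endpoint-insertion argument indicated above.
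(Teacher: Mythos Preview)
Your argument is correct, and for parts i) and ii) it coincides with the paper's: the same event $A_b$ is defined via Hoeffding plus a union bound on the coefficients $c_k=\sum_j a_{jk}\nu_j/\tilde\lambda_j$, and nonnegativity and the H\"older bound are checked exactly as you do. The genuine difference is in iii), where both proofs reduce to $\sum_j\tilde\lambda_j^{-2}\ge 3N/(8h)$ but reach it by different routes. The paper bounds the trace \emph{globally}: writing $f_{i,z_i,B}=\1_B\big(q_i(z_i\mid\cdot)/g_{0,i}(z_i)-e^{-2\alpha}\big)\in[0,z_\alpha]$, it uses $\sum_j\langle f_{i,z_i,B},v_j\rangle^2=\Vert\mathrm{Proj}_{W_N}f_{i,z_i,B}\Vert_2^2\le\Vert f_{i,z_i,B}\Vert_2^2\le z_\alpha^2|B|$ to get $\sum_j\lambda_j^2\le z_\alpha^2|B|=2Nhz_\alpha^2$. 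This bound is too weak for a Markov count, so the paper keeps the contribution of the large eigenvalues via the harmonic--arithmetic mean inequality, arriving at the quadratic $1-x+x^2\ge 3/4$ in $x=|\mathcal J|/N$. You instead prove a \emph{localized} bound on each $\Vert K_i\psi_j\Vert^2$ by centring $q_i(z\mid\cdot)$ at the midpoint of its range and using $\Supp\psi_j=B_j$, $\int_{B_j}\psi_j=0$; this yields a trace bound four times sharper ($\sum_j\lambda_j^2\le\tfrac12Nhz_\alpha^2$), after which a plain Markov count shows at most $N/4$ eigenvalues exceed the threshold and you may simply discard them. Your route is more elementary once the sharper trace estimate is available; the paper's route is more robust in that it works with the cruder global $L_2$ bound and does not throw away the large-eigenvalue contribution.
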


Denote by $g_{\nu,i}(z_i)=\int_\Rb q_i(z_i\mid x)f_\nu(x)\dd x$ the density of $Z_i$ when $X_i$ has density $f_\nu$, and 
$$
d Q_n(z_1,\ldots,z_n)=\Eb_\nu\left[ \prod_{i=1}^n g_{\nu,i}(z_i)\dd \mu_i(z_i) \right].
$$
If $\delta$ is chosen such that $\delta\leq \frac{ h}{\sqrt{\log(2N/b)}}\min\left\{ \frac{C_0(B)}{\Vert\psi\Vert_\infty} , \frac{1}{2}\left(1-\frac{L_0}{L} \right)h^\beta\right\}$, setting  
\begin{equation*}
\rho^\star =\frac{3C_1}{8}\frac{\delta  N }{\sqrt{\log\left( \frac{2N}{b} \right)}},
\end{equation*}
we deduce from the above lemma that if
\begin{equation}\label{Proof LB NI Cond1}
\Eb_{Q_{f_0}^n}\left[\left( \frac{d Q_n}{d Q_{f_0}^n} \right)^2  \right]\leq 1+(1-\gamma-b)^2 \text{ for all } Q\in \Qc_\alpha^{\text{NI}},
\end{equation}
 then it holds
$$
 \inf_{Q\in \Qc_\alpha^{\text{NI}}}\inf_{\phi\in \Phi_Q} \sup_{f\in H_1(\rho^\star)}\left\{\Pb_{Q_{f_0}^n}(\phi=1)+ \Pb_{Q_{f}^n}(\phi=0) \right\}\geq \gamma,
$$
where $H_1(\rho^\star):= \{ f \in H(\beta, L) : f\geq 0, \int f=1, \Vert f-f_0\Vert_1 \geq \rho^\star\}$, and consequently $\Ec^{\text{NI}}_{n,\alpha}(f_0,\gamma)\geq \rho^\star$.
Indeed, if \eqref{Proof LB NI Cond1} holds, then we have
\begin{align*}
&\inf_{Q\in \Qc_\alpha^{\text{NI}}}\inf_{\phi\in \Phi_Q} \sup_{f\in H_1(\rho^\star)}\left\{\Pb_{Q_{f_0}^n}(\phi=1)+ \Pb_{Q_{f}^n}(\phi=0) \right\} \\
&\geq \inf_{Q\in \Qc_\alpha^{\text{NI}}}\inf_{\phi\in \Phi_Q} \left( \Pb_{Q_{f_0}^n}(\phi=1)+ \sup_{\nu\in A_b}\Pb_{Q_{f_\nu}^n}(\phi=0) \right)\\
&\geq \inf_{Q\in \Qc_\alpha^{\text{NI}}}\inf_{\phi\in \Phi_Q} \left( \Pb_{Q_{f_0}^n}(\phi=1)+ \Eb_\nu\left[I(\nu\in A_b)\Pb_{Q_{f_\nu}^n}(\phi=0)\right] \right),
\end{align*}
and 
\begin{align*}
\Eb_\nu\left[I(\nu\in A_b)\Pb_{Q_{f_\nu}^n}(\phi=0)\right]&= \Pb_{Q_n}(\phi=0)-\Eb_\nu\left[I(\nu\in A_b^c)\Pb_{Q_{f_\nu}^n}(\phi=0)\right]\\
&\geq \Pb_{Q_n}(\phi=0)-\Pb_\nu(A_b^c)\\
&\geq \Pb_{Q_n}(\phi=0)-b.
\end{align*}
Thus, if \eqref{Proof LB NI Cond1}  holds, we have
\begin{align*}
&\inf_{Q\in \Qc_\alpha^{\text{NI}}}\inf_{\phi\in \Phi_Q} \sup_{f\in H_1(\rho^\star)}\left\{\Pb_{Q_{f_0}^n}(\phi=1)+ \Pb_{Q_{f}^n}(\phi=0) \right\}\\
&\geq \inf_{Q\in \Qc_\alpha^{\text{NI}}}\inf_{\phi\in \Phi_Q} \left( \Pb_{Q_{f_0}^n}(\phi=1)+ \Pb_{Q_n}(\phi=0)-b
 \right)\\
 &\geq \inf_{Q\in \Qc_\alpha^{\text{NI}}} \left(1-\text{TV}(Q_n,Q_{f_0}^n)-b
 \right)\\
 &= \inf_{Q\in \Qc_\alpha^{\text{NI}}} \left(1-b-\sqrt{\Eb_{Q_{f_0}^n}\left[\left( \frac{d Q_n}{d Q_{f_0}^n} \right)^2  \right]-1}
 \right) \geq \gamma.
\end{align*}
We now prove that \eqref{Proof LB NI Cond1} holds under an extra assumption on $\delta$.

We have that 
\begin{align*}
&\Eb_{Q_{f_0}^n}\left[ \left(\frac{d Q_n}{d Q_{f_0}^n} \right)^2 \right] =\Eb_{Q_{f_0}^n}\left[\left( \frac{\Eb_\nu\left[ \prod_{i=1}^n g_{\nu,i}(Z_i) \right]}{\prod_{i=1}^n g_{0,i}(Z_i)} \right)^2 \right] \\
&= \Eb_{Q_{f_0}^n}\left[ \Eb_{\nu,\nu'}\prod_{i=1}^n \left(1+\delta\sum_{k=1}^N\frac{\nu_k}{\tilde{\lambda}_k}\cdot \frac{\langle q_i(Z_i\mid \cdot),v_k\rangle}{g_{0,i}(Z_i)}  \right)\cdot\left(1+\delta\sum_{k=1}^N\frac{\nu'_k}{\tilde{\lambda}_k}\cdot \frac{\langle q_i(Z_i\mid \cdot),v_k\rangle}{g_{0,i}(Z_i)}  \right) \right]\\
&=\Eb_{\nu,\nu'}\prod_{i=1}^n\left(1+\delta\sum_{k=1}^N\frac{\nu_k}{\tilde{\lambda}_k}\Eb_{Q_{f_0}}\left[ \frac{\langle q_i(Z_i\mid \cdot),v_k\rangle}{g_{0,i}(Z_i)}\right]+\delta\sum_{k=1}^N\frac{\nu'_k}{\tilde{\lambda}_k}\Eb_{Q_{f_0}}\left[ \frac{\langle q_i(Z_i\mid \cdot),v_k\rangle}{g_{0,i}(Z_i)}\right] \right.\\
&\left. \hspace{2cm}+\delta^2 \sum_{k_1,k_2=1}^N\frac{\nu_{k_1}\nu'_{k_2}}{\tilde{\lambda}_{k_1}\tilde{\lambda}_{k_2}}\Eb_{Q_{f_0}}\left[ \frac{\langle q_i(Z_i\mid \cdot),v_{k_1}\rangle\langle q_i(Z_i\mid \cdot),v_{k_2}\rangle}{(g_{0,i}(Z_i))^2}\right]\right),
\end{align*}
where we have interverted $\Eb_{Q_{f_0}^n}$ and $\Eb_{\nu,\nu'}$ and used the independence of the $Z_i$, $i=1,\ldots,n$.
Now, observe that
\begin{align*}
\Eb_{Q_{f_0}}\left[ \frac{\langle q_i(Z_i\mid \cdot),v_k\rangle}{g_{0,i}(Z_i)}\right]&=\int_{\Zc_i} \frac{\langle q_i(z_i\mid \cdot),v_k\rangle}{g_{0,i}(z_i)}\cdot g_{0,i}(z_i)\dd\mu_i(z_i)\\
&=\int_{\Zc_i} \left(\int_\Rb q_i(z_i\mid x )v_k(x)\dd x \right)\dd\mu_i(z_i)\\
&=\int_\Rb v_k=0,
\end{align*}
and, using that $\Supp(v_k)\subset B$ for all $k$,
\begin{align*}
&\Eb_{Q_{f_0}}\left[ \frac{\langle q_i(Z_i\mid \cdot),v_{k_1}\rangle\langle q_i(Z_i\mid \cdot),v_{k_2}\rangle}{(g_{0,i}(Z_i))^2}\right]\\
&=\int_{\Zc_i}\frac{\langle q_i(z_i\mid \cdot),v_{k_1}\rangle\langle q_i(z_i\mid \cdot),v_{k_2}\rangle}{(g_{0,i}(z_i))^2}\cdot g_{0,i}(z_i)\dd\mu_i(z_i)\\
&=\int_{\Zc_i}\frac{1}{g_{0,i}(z_i)}\left( \int_\Rb q_i(z_i\mid x )v_{k_1}(x)\dd x \right)\left( \int_\Rb q_i(z_i\mid y )v_{k_2}(y)\dd y \right)\dd\mu_i(z_i)\\
&=\int_{\Rb}\int_{\Rb}\left( \int_{\Zc_i}\frac{q_i(z_i\mid x )q_i(z_i\mid y )\1_B(x)\1_B(y)}{g_{0,i}(z_i)} \dd\mu_i(z_i)\right)v_{k_1}(x)v_{k_2}(y)\dd x\dd y\\
&=\int_{\Rb}\int_{\Rb}F_i(x,y)v_{k_1}(x)v_{k_2}(y)\dd x\dd y
=\langle v_{k_1},K_i^\star K_iv_{k_2}\rangle.
\end{align*}

Using $1+x\leq \exp(x)$, we thus obtain 
\begin{align*}
\Eb_{Q_{f_0}^n}\left[ \left(\frac{dQ_n}{d Q_{f_0}^n} \right)^2 \right]&=\Eb_{\nu,\nu'}\prod_{i=1}^n\left(1+\delta^2 \sum_{k_1,k_2=1}^N\frac{\nu_{k_1}\nu'_{k_2}}{\tilde{\lambda}_{k_1}\tilde{\lambda}_{k_2}} \langle v_{k_1},K_i^\star K_iv_{k_2}\rangle \right)\\
&\leq \Eb_{\nu,\nu'}\left[\exp\left(\delta^2\sum_{i=1}^n \sum_{k_1,k_2=1}^N\frac{\nu_{k_1}\nu'_{k_2}}{\tilde{\lambda}_{k_1}\tilde{\lambda}_{k_2}} \langle v_{k_1},K_i^\star K_iv_{k_2}\rangle \right) \right]\\
&=\Eb_{\nu,\nu'}\left[\exp\left(n \delta^2 \sum_{k_1,k_2=1}^N\frac{\nu_{k_1}\nu'_{k_2}}{\tilde{\lambda}_{k_1}\tilde{\lambda}_{k_2}} \langle v_{k_1},Kv_{k_2}\rangle \right) \right]\\
&=\Eb_{\nu,\nu'}\left[\exp\left( n \delta^2 \sum_{k_1,k_2=1}^N\frac{\nu_{k_1}\nu'_{k_2}}{\tilde{\lambda}_{k_1}\tilde{\lambda}_{k_2}}\cdot \lambda_{k_2}^2 \langle v_{k_1},v_{k_2}\rangle \right) \right]\\
&\leq \Eb_{\nu,\nu'}\left[\exp\left(n \delta^2z_\alpha^2 \sum_{k=1}^N\nu_k\nu'_k  \right) \right],
\end{align*}
where we have used that
$$
\frac{\lambda_k^2}{\tilde{\lambda}_k^2}=\frac{\lambda_k^2}{\max\{z_\alpha^{-2}\lambda_k^2, 2h \}}\leq z_\alpha^2.
$$
Now, using that for $k=1,\ldots,N$, $\nu_k$, $\nu_k'$ are Rademacher distributed and independent random variables, we obtain
\begin{align*}
&\Eb_{Q_{f_0}^n}\left[ \left(\frac{dQ_n}{d Q_{f_0}^n} \right)^2 \right] \leq  \Eb_{\nu,\nu'}\left[\prod_{k=1}^N\exp\left(n \delta^2z_\alpha^2 \nu_k\nu'_k   \right) \right]\\
&=\Eb_\nu\left[\prod_{k=1}^N\cosh\left(n \delta^2z_\alpha^2 \nu_k  \right)\right] 
=\prod_{k=1}^N\cosh\left(n \delta^2z_\alpha^2 \right)
\leq \exp\left(\frac{Nn^2\delta^4 z_\alpha^4}{2} \right),
\end{align*}
where the last inequality follows from $\cosh(x)\leq \exp(x^2/2)$ for all $x\in\Rb$.
Thus, \eqref{Proof LB NI Cond1} holds as soon as 
$$
\delta\leq \left[\frac{2\log\left(1+(1-b-\gamma)^2\right)}{Nn^2z_\alpha^4}\right]^{1/4}.
$$
Finally, taking $\delta=\min\left\{\frac{h}{\sqrt{\log(2N/b)}}\min\left\{ \frac{C_0(B)}{\Vert\psi\Vert_\infty} , \frac{1}{2}\left(1-\frac{L_0}{L} \right)h^\beta\right\}, \left[\frac{2\log\left(1+(1-b-\gamma)^2\right)}{Nn^2z_\alpha^4}\right]^{1/4}  \right\}$, we obtain
$$
\Ec^{\text{NI}}_{n,\alpha}(f_0,\gamma)\geq C(\psi,b,\gamma)\frac{ 1}{\sqrt{\log\left( 2N/b\right)}}\min\left\{\frac{\vert B\vert}{\sqrt{\log(2N/b)}}\min\left\{ \frac{C_0(B)}{\Vert\psi\Vert_\infty} , \frac{1}{2}\left(1-\frac{L_0}{L} \right)h^\beta\right\}, \frac{N^{3/4}}{\sqrt{n z_\alpha^2}}\right\}.
$$
If $B$ is chosen such that $C_0(B)=\min\{ f_0(x), x\in B\}\geq Ch^\beta$, then the bound becomes
$$
\Ec^{\text{NI}}_{n,\alpha}(f_0,\gamma)\geq C(\psi,b,\gamma,L,L_0)\frac{ 1}{\sqrt{\log\left( 2N/b \right)}}\min\left\{\frac{\vert B\vert h^\beta}{\sqrt{\log(2N/b)}}, \frac{N^{3/4}}{\sqrt{n z_\alpha^2}}\right\},
$$
and the choice $h\asymp \vert B \vert^{-1/(4\beta+3)}(nz_\alpha^2)^{-2/(4\beta+3)}$ yields
$$
\Ec^{\text{NI}}_{n,\alpha}(f_0,\gamma)\geq C(\psi,b,\gamma,L,L_0)\left[\log\left(C \vert B\vert^{\frac{4\beta+4}{4\beta+3}}(nz_\alpha^2)^\frac{2}{4\beta+3}  \right)\right]^{-1}\vert B \vert^{\frac{3\beta+3}{4\beta+3}}(nz_\alpha^2)^{-\frac{2\beta}{4\beta+3}}.
$$
Note that with this choice of $h$, the condition $C_0(B)\geq Ch^\beta$ becomes 
$$\vert B \vert^{\beta/(4\beta+3)}C_0(B) \geq C(nz_\alpha^2)^{-2\beta/(4\beta+3)}.
$$
\end{proof}


\section{Interactive Privacy Mechanisms}\label{Section Intercative scenario}

In this section, we prove that the results obtained in Section \ref{Section Non-interactive scenario} can be improved when sequential interaction is allowed between data-holders.

\subsection{Upper bound in the interactive scenario}

We first propose a testing procedure which relies on some sequential interaction between data-holders. We then prove that this test achieves a better separation rate than the one obtained in Section \ref{Section Non-interactive scenario}.

We assume that the sample size is equal to $3n$ so that we can split the data in three parts.
Like in the non-interactive scenario, we consider a non-empty compact set $B\subset\Rb$, and $B=\bigcup_{j=1}^NB_j$  a partition of $B$ with $\vert B_j\vert=2h$ for all $j\in\llbr 1,N\rrbr$.

With the first third of the data, $X_1,\ldots,X_n$, we generate privatized arrays $Z_i=(Z_{ij})_{j=1,\ldots,N}$ that will be used to estimate $p(j):=\int_{B_j}f$.
Let's consider the following privacy mechanism. We first generate an i.i.d. sequence $(W_{ij})_{i\in\llbr 1,n\rrbr, j\in\llbr 1,N\rrbr}$ of Laplace(1) random variables and for $i=1,\ldots,n$ and $j=1,\ldots,N$ we set
$$
Z_{ij}=I(X_i\in B_j)+\frac{2}{\alpha}W_{ij}.
$$ 
For each $j=1,\ldots, N$, we then build an estimator of $p(j):=\int_{B_j}f$ via
$$
\widehat{p}_j=\frac{1}{n}\sum_{i=1}^nZ_{ij}.
$$
We now privatize the second third of the data.
Set $c_\alpha=\frac{e^\alpha+1}{e^\alpha-1}$ and $\tau=(n\alpha^2)^{-1/2}$.
For all $i\in\llbr n+1,2n\rrbr$, we generate $Z_i\in\{-c_\alpha\tau,c_\alpha\tau\}$ using the estimator $\widehat{p}_j$ and the true data $X_i$ by
$$
\Pb\left(Z_i= \pm c_\alpha\tau \mid X_i\in B_j \right)=\frac{1}{2}\left( 1 \pm \frac{[\widehat{p}_j-p_0(j)]_{-\tau}^\tau}{c_\alpha\tau} \right),
$$
$$
\Pb\left(Z_i= \pm c_\alpha\tau \mid X_i\in \bar{B} \right)=\frac{1}{2},
$$
where $[x]_{-\tau}^\tau=\max\{-\tau,\min(x,\tau)\}$,  and $p_0(j)=\int_{B_j}f_0$.
We then define the test statistic
$$
D_B=\frac{1}{n}\sum_{i=n+1}^{2n}Z_i-\sum_{j=1}^N p_0(j)[\widehat{p}_j-p_0(j)]_{-\tau}^\tau.
$$
The analysis of the mean and variance of this statistic can be found in Appendix \ref{App Analysis D_B}.
It will be crucial in the analysis of our final test procedure.\\
Finally, we define the same tail test statistic as in Section \ref{Section Non-interactive scenario}. 
For all $i\in\llbr 2n+1,3n \rrbr $, a private view $Z_i$ of $X_i$ is generated by 
$$
Z_i = \pm c_\alpha, \text{ with probabilities }\frac 12 \left(1 \pm \frac{I(X_i \not \in B)}{c_\alpha} \right),
$$
and we set
$$
T_B =\frac{1}{n}\sum_{i=2n+1}^{3n}Z_i - \int_{\overline B} f_0.
$$
The final test is 
\begin{equation}\label{Eq Test in the interactive scenario}
\Phi=
\begin{cases}
1 & \text{ if } D_B\geq t_1 \text{ or }  T_B\geq t_2 \\
0 & \text{ otherwise }
\end{cases},
\end{equation}
where
\begin{equation}\label{Eq interactive t1 and t2}
t_1= \frac{2\sqrt{5}}{n\alpha^2\sqrt{\gamma}}, \quad  \quad t_2= \sqrt{\frac{20}{n\alpha^2\gamma}}.    
\end{equation}
We denote the privacy mechanism that outputs $(Z_1,\ldots,Z_n,Z_{n+1},\ldots,Z_{2n},Z_{2n+1},\ldots, Z_{3n})$ by $Q$.
It is sequentially interactive since each $Z_i$ for $i\in\llbr n+1,2n\rrbr $ depends on the privatized data $(Z_1,\ldots,Z_n)$ through $\widehat{p}_j$, but does not depend on the other $Z_k$, $k\in\llbr n+1,2n\rrbr$, $k\neq i$. 
The following result establishes that this mechanism provides $\alpha$-local differential privacy.
Its proof is deferred to Appendix \ref{App Proof privacy interactive mechanism}.

\begin{proposition}\label{Prop Privacy Interactive mechanism}
The sequentially interactive privacy mechanism $Q$ provides $\alpha$-local differential privacy. 
\end{proposition}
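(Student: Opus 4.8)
The plan is to verify the $\alpha$-local differential privacy condition \eqref{eq alphaLDPconstraint} for each of the three blocks of privatized data separately, since the three blocks are constructed using disjoint portions of the sample and the constraint is imposed index by index. For the first block $i\in\llbr 1,n\rrbr$, each $Z_i=(Z_{ij})_{j=1,\ldots,N}$ is a vector whose $j$-th coordinate is $I(X_i\in B_j)+\frac 2\alpha W_{ij}$ with independent Laplace$(1)$ noise $W_{ij}$. Since the $B_j$ partition $B$, for any fixed $X_i=x$ exactly one of the indicators $I(x\in B_j)$ equals $1$ (or all are $0$ if $x\in\bar B$), so the $\ell_1$-sensitivity of the map $x\mapsto (I(x\in B_j))_{j=1,\ldots,N}$ is at most $2$. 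The density of $Z_i$ given $X_i=x$ is a product over $j$ of Laplace densities centered at $I(x\in B_j)$ with scale $2/\alpha$, and a direct computation of the ratio of two such product densities at points $x,x'$ bounds it by $\exp\bigl(\frac{\alpha}{2}\sum_j |I(x\in B_j)-I(x'\in B_j)|\bigr)\leq \exp(\alpha)$; this is the standard Laplace-mechanism argument and I would cite or mirror the proof of Proposition \ref{Prop Privacy of the NI Mechanism}. Crucially the noise scale $2/\alpha$ (not $4/\alpha$) suffices precisely because the sensitivity is $2$ rather than the naive bound $2N$.

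For the second block $i\in\llbr n+1,2n\rrbr$, the key point is that $Z_i$ is a bounded $\{-c_\alpha\tau,c_\alpha\tau\}$-valued random variable whose conditional law depends on $X_i$ only through which cell $B_j$ contains $X_i$ (or whether $X_i\in\bar B$), and additionally through the previously released $\widehat p_j$, which is a function of $Z_1,\ldots,Z_n$ and hence allowed in the conditioning set of \eqref{eq alphaLDPconstraint}. I would treat $\widehat p_j$ as fixed, so that $[\widehat p_j-p_0(j)]_{-\tau}^\tau\in[-\tau,\tau]$, and then compute, for any two values $x,x'$ of $X_i$, the worst-case ratio of $\Pb(Z_i=\pm c_\alpha\tau\mid X_i=x,\cdot)$ to $\Pb(Z_i=\pm c_\alpha\tau\mid X_i=x',\cdot)$. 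Each such probability lies in $\frac12\bigl(1\pm\frac{1}{c_\alpha}\bigr)$ because $|[\widehat p_j-p_0(j)]_{-\tau}^\tau|/(c_\alpha\tau)\le 1/c_\alpha$, and the extreme ratio is $\frac{1+1/c_\alpha}{1-1/c_\alpha}$; substituting $c_\alpha=(e^\alpha+1)/(e^\alpha-1)$ gives exactly $e^\alpha$. This is the same randomized-response-type bound already used implicitly for the tail statistic.

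For the third block $i\in\llbr 2n+1,3n\rrbr$, $Z_i$ is the same $\pm c_\alpha$ randomized response on the event $\{X_i\notin B\}$ as in Section \ref{Section Non-interactive scenario}, and its privacy is literally Proposition \ref{Prop Privacy of the NI Mechanism} restricted to that block; the probabilities $\frac12(1\pm I(X_i\notin B)/c_\alpha)$ again yield ratio $e^\alpha$ by the choice of $c_\alpha$. Finally I would assemble the three verifications: since each $Z_i$ satisfies \eqref{eq alphaLDPconstraint} with its own conditioning set (empty for blocks one and three, and $\{Z_1,\ldots,Z_n\}$ for block two, which is a subset of $\{Z_1,\ldots,Z_{i-1}\}$), the full sequence $(Q_i)_{i=1,\ldots,3n}$ lies in $\Qc_\alpha$, and sequential interactivity holds because $Z_i$ for $i\in\llbr n+1,2n\rrbr$ depends on earlier outputs only through the $\widehat p_j$ and not on other second-block outputs. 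The main obstacle is bookkeeping rather than mathematics: one must be careful that conditioning on $\widehat p_j$ is legitimate under \eqref{eq alphaLDPconstraint} — it is, because $\widehat p_j$ is $\sigma(Z_1,\ldots,Z_n)$-measurable — and that the supremum over $z_1,\ldots,z_{i-1}$ in \eqref{eq alphaLDPconstraint} is taken after fixing the $\widehat p_j$, so the per-index bound $e^\alpha$ holds uniformly.
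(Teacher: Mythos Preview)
Your proposal is correct and follows essentially the same approach as the paper's proof: a block-by-block verification using the Laplace-mechanism ratio bound for $i\in\llbr 1,n\rrbr$, a randomized-response ratio bound for $i\in\llbr n+1,2n\rrbr$ (the paper checks the pairs $B_j$ vs.\ $B_k$ and $B_j$ vs.\ $\bar B$ explicitly, whereas you give the equivalent but slightly more compact argument that all conditional probabilities lie in $[\tfrac12(1-1/c_\alpha),\tfrac12(1+1/c_\alpha)]$), and a direct appeal to Proposition~\ref{Prop Privacy of the NI Mechanism} for $i\in\llbr 2n+1,3n\rrbr$.
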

The following Proposition gives properties of the test statistic $D_B$. Its proof is in the Appendix~\ref{App Analysis D_B}.
\begin{proposition}\label{Prop Test Stat Int}
1. It holds $\Eb_{Qf^{n}}[D_B]= \sum_{j=1}^N\{p(j)- p_0(j)\}\Eb\left[[\widehat{p}_j-p_0(j)]_{-\tau}^\tau\right].$
In particular, $\Eb_{Qf_0^n}[D_B]= 0$. 
Moreover, we have 
\begin{equation}\label{Eq Lower bound on the Mean of the Interactive test statistic}
\Eb_{Qf^n}[D_B]\geq \frac{1}{6}D_\tau(f)-6\frac{\tau}{\sqrt{n}},
\end{equation}
with $D_\tau(f)= \sum_{j=1}^N\vert p(j)-p_0(j)\vert \min\left\{ \vert p(j)-p_0(j)\vert, \tau  \right\} $ where we recall that $p(j):=\int_{B_j}f$.

2. It holds
$$
\Var_{Qf^n}(D_B)\leq \frac{5}{(n\alpha^2)^2}+67\frac{D_\tau(f)}{n\alpha^2}.
$$
\end{proposition}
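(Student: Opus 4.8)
The plan is to work conditionally on the first block of data, treating the two parts of the statement independently. Throughout write $\Delta_j=p(j)-p_0(j)$, $\widehat\Delta_j=[\widehat p_j-p_0(j)]_{-\tau}^{\tau}$, and $\widehat p=(\widehat p_1,\dots,\widehat p_N)$, which depends only on $X_1,\dots,X_n$ and the Laplace variables used in the first step. For the identity, I would condition on $\widehat p$: for $i\in\llbr n+1,2n\rrbr$ the point $X_i$ is independent of $\widehat p$, and by the definition of the mechanism $\Eb[Z_i\mid\widehat p,\,X_i\in B_j]=\widehat\Delta_j$ while $\Eb[Z_i\mid\widehat p,\,X_i\in\overline B]=0$; hence $\Eb[\tfrac1n\sum_{i=n+1}^{2n}Z_i\mid\widehat p]=\sum_j p(j)\widehat\Delta_j$, so that $\Eb[D_B\mid\widehat p]=\sum_j(p(j)-p_0(j))\widehat\Delta_j=\sum_j\Delta_j\widehat\Delta_j$. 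Both the stated formula and $\Eb_{Qf_0^n}[D_B]=0$ follow by taking one further expectation and noting that all $\Delta_j$ vanish under $H_0$.

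The substantial step is the lower bound \eqref{Eq Lower bound on the Mean of the Interactive test statistic}, for which I would prove a one–coordinate estimate. Decompose $\widehat p_j-p(j)=\zeta_j+L_j$, where $\zeta_j=\tfrac1n\sum_i(I(X_i\in B_j)-p(j))$ is the empirical part (with $\Var(\zeta_j)\le p(j)/n$) and $L_j=\tfrac2{n\alpha}\sum_iW_{ij}$ is the Laplace part, symmetric, independent of $\zeta_j$, with variance $8\tau^2$. Introduce $G(m):=\Eb_{L_j}\big[[m+L_j]_{-\tau}^{\tau}\big]$ and establish that $G$ is odd, non-decreasing, $1$-Lipschitz, concave on $[0,\infty)$, and that $\mathrm{sign}(m)\,G(m)\ge\tfrac13\min\{|m|,\tau\}$ uniformly in $n$. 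Conditioning on $\zeta_j$ and using that $G$ is $1$-Lipschitz then gives, for every $j$,
\[
\Delta_j\,\Eb[\widehat\Delta_j]=\Delta_j\,\Eb_{\zeta_j}[G(\Delta_j+\zeta_j)]\ \ge\ \Delta_j\,G(\Delta_j)-|\Delta_j|\,\Eb|\zeta_j|\ \ge\ \tfrac13|\Delta_j|\min\{|\Delta_j|,\tau\}-|\Delta_j|\sqrt{p(j)/n}.
\]
Summing over $j$ produces $\tfrac13 D_\tau(f)$ plus an error $\sum_j|\Delta_j|\sqrt{p(j)/n}$, which I would control by Cauchy--Schwarz together with $\sum_jp(j)\le1$, after separating the coordinates with $|\Delta_j|>\tau$ (for those, $\widehat p_j-p_0(j)$ concentrates, so $\widehat\Delta_j=\mathrm{sign}(\Delta_j)\tau$ except on a Chebyshev-small event and the coordinate contributes a clean $\ge c\,|\Delta_j|\tau$ with no error), and finally collecting constants to reach $\Eb_{Qf^n}[D_B]\ge\tfrac16 D_\tau(f)-6\tau/\sqrt n$. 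I expect the scalar inequality $G(\tau)\ge\tfrac13\tau$ to be the main obstacle: it must hold uniformly in $n$, so one has to quantify how much mass the symmetric, variance-$8$ law of $L_j/\tau=\tfrac2{\sqrt n}\sum_iW_i$ puts near the origin and below $-2$; this is also precisely why the symmetry of the Laplace noise must be invoked explicitly, since a crude bound on $\Eb|\widehat p_j-p(j)|$ is $\asymp\tau$ and would only yield an $O(\tau)$ error instead of $O(\tau/\sqrt n)$.

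For Part 2 I would use the law of total variance, $\Var_{Qf^n}(D_B)=\Eb[\Var(D_B\mid\widehat p)]+\Var(\Eb[D_B\mid\widehat p])$. Given $\widehat p$, $D_B$ is a deterministic quantity plus $\tfrac1n\sum_{i=n+1}^{2n}Z_i$, an average of conditionally independent variables of modulus $c_\alpha\tau$, so $\Var(D_B\mid\widehat p)\le c_\alpha^2\tau^2/n$; since $c_\alpha=(e^\alpha+1)/(e^\alpha-1)\lesssim\alpha^{-1}$ this is of order $\tau^2/(n\alpha^2)=(n\alpha^2)^{-2}$ and, with the explicit constants, yields the $5(n\alpha^2)^{-2}$ term. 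For the second contribution, $\Eb[D_B\mid\widehat p]=\sum_j\Delta_j\widehat\Delta_j$ with each $\widehat\Delta_j$ a $1$-Lipschitz function of $\widehat p_j$ bounded by $\tau$, so $\Var(\widehat\Delta_j)\le\Eb[\widehat\Delta_j^2]\le\tau\,\Eb|\widehat p_j-p_0(j)|$, which I would refine on the coordinates with large $|\Delta_j|$ (where $\widehat\Delta_j$ is nearly constant by concentration of $\widehat p_j$) to get $\Delta_j^2\,\Var(\widehat\Delta_j)\lesssim\tau^2\,|\Delta_j|\min\{|\Delta_j|,\tau\}$ for every $j$. The cross terms $\mathrm{Cov}(\widehat\Delta_j,\widehat\Delta_k)$, $j\ne k$, carry only the dependence through the $X_i$ (the Laplace noises being independent across $j$) and $(I(X_i\in B_j))_j$ is negatively associated, so these covariances are controlled by $\sqrt{\Var(\widehat\Delta_j)\,\Var(\widehat\Delta_k)}$ through the $X$-part and a Cauchy--Schwarz bound shows they do not change the order; this gives $\Var(\Eb[D_B\mid\widehat p])\lesssim\tau^2 D_\tau(f)+\tau^4$. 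Adding the two contributions and tracking constants yields $\Var_{Qf^n}(D_B)\le 5(n\alpha^2)^{-2}+67\,D_\tau(f)/(n\alpha^2)$.
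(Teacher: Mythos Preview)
Your treatment of the identity for $\Eb[D_B]$ and of the conditional-variance term $\Eb[\Var(D_B\mid\widehat p)]\le c_\alpha^2\tau^2/n$ matches the paper exactly. For the lower bound \eqref{Eq Lower bound on the Mean of the Interactive test statistic} the paper simply defers to \cite{Berrett_Butucea_2020_DiscreteDistribTesting}; your scalar-function approach via $G(m)=\Eb[[m+L_j]_{-\tau}^\tau]$ is a reasonable route, though the bookkeeping you sketch (separating $|\Delta_j|>\tau$ and absorbing $\sum_j|\Delta_j|\sqrt{p(j)/n}$) would need to be made precise---in particular, the error on the ``small $|\Delta_j|$'' coordinates should be handled via $|\Delta_j|\sqrt{p(j)/n}\le \epsilon\Delta_j^2+p(j)/(4\epsilon n)$ and $\sum_j p(j)\le1$, not by a direct Cauchy--Schwarz that would introduce a $\sqrt N$.

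The genuine gap is your handling of the cross terms $\Cov(\widehat\Delta_{j_1},\widehat\Delta_{j_2})$ in Part~2. Negative association of $(I(X_i\in B_j))_j$ only gives $\Cov(\widehat\Delta_{j_1},\widehat\Delta_{j_2})\le0$, which does not control $\sum_{j_1\ne j_2}\Delta_{j_1}\Delta_{j_2}\Cov(\widehat\Delta_{j_1},\widehat\Delta_{j_2})$ when the signs of the $\Delta_j$ differ. And the Cauchy--Schwarz bound $|\Cov|\le\sqrt{\Var(\widehat\Delta_{j_1})\Var(\widehat\Delta_{j_2})}$ is too weak: even with the sharp diagonal bound $\Var(\widehat\Delta_j)\lesssim\tau^2 e^{-c(\Delta_j/\tau)^2}$, the resulting estimate $\bigl(\sum_j|\Delta_j|\sqrt{\Var(\widehat\Delta_j)}\bigr)^2$ can be of order $N\tau^2 D_\tau(f)$ in the regime where many $|\Delta_j|\asymp\tau$, an extra factor of $N$ compared to what is claimed. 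The paper obtains instead the quantitatively stronger bound
\[
|\Cov(\widehat\Delta_{j_1},\widehat\Delta_{j_2})|\ \lesssim\ \frac{p(j_1)p(j_2)}{n}\,\exp\Bigl(-c\,n\alpha^2\bigl[\Delta_{j_1}^2+\Delta_{j_2}^2\bigr]\Bigr),
\]
via a martingale/telescoping decomposition in the index $i$ of the sample: writing $\Cov=\sum_{i=1}^n\Eb\bigl[\Eb_{i+1}[P_{j_1}]\Eb_{i+1}[P_{j_2}]-\Eb_i[P_{j_1}]\Eb_i[P_{j_2}]\bigr]$ and exploiting that the conditional law of $P_{j_1}$ given $X_i\in B_l$ is the same for every $l\neq j_1$. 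This is what produces the factor $p(j_1)p(j_2)$ (rather than $\tau^2$), after which Cauchy--Schwarz together with $\sum_j p(j)^2\le1$ makes the cross-term contribution of the same order as the diagonal one. Your proposal is missing this idea; without it, the route you describe does not reach $\Var(D_B)\le 5(n\alpha^2)^{-2}+67\,D_\tau(f)/(n\alpha^2)$.
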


The following result presents an upper bound on $\Ec_{n,\alpha}(f_0,\gamma)$.
Its proof is in Appendix \ref{App Proof upper bound interactive scenario}.

\begin{theorem}\label{Thrm Interactive Upper bound}
Assume that $\alpha\in(0,1)$ and $\beta<1$.
The test procedure $\Phi$ in \eqref{Eq Test in the interactive scenario} with $t_1$ and $t_2$ in \eqref{Eq interactive t1 and t2} and bandwidth $h$ given by
$$
h\asymp \vert B\vert ^{-\frac{1}{2\beta+1}}(n\alpha^2)^{-\frac{1}{2\beta+1}},
$$
attains the following bound on the separation rate
$$
\Ec_{n,\alpha}(f_0,\gamma)\leq C(L,L_0,\gamma) \left\{ \vert B\vert ^{\frac{\beta+1}{2\beta+1}}(n\alpha^2)^{-\frac{\beta}{2\beta+1}} +  \int_{\overline B} f_0 + \frac 1{\sqrt{n \alpha^2}}\right\}.
$$
\end{theorem}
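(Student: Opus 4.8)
The plan is to control the first and second type error probabilities of the test $\Phi$ defined in \eqref{Eq Test in the interactive scenario} separately, using a union bound over the two rejection events $\{D_B \geq t_1\}$ and $\{T_B \geq t_2\}$, and then to optimize the bandwidth $h$. For the first type error, I would fix $f \equiv f_0$; by Proposition \ref{Prop Test Stat Int} we have $\Eb_{Qf_0^n}[D_B] = 0$ and $\Var_{Qf_0^n}(D_B) \leq 5/(n\alpha^2)^2$ since $D_\tau(f_0) = 0$, so Chebyshev's inequality gives $\Pb_{Qf_0^n}(D_B \geq t_1) \leq 5/((n\alpha^2)^2 t_1^2) = \gamma/4$ with $t_1 = 2\sqrt 5/(n\alpha^2\sqrt\gamma)$. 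Similarly, from Proposition \ref{Prop Test Stat NI}(2), under $H_0$ we have $\Eb_{Qf_0^n}[T_B] = 0$ and $\Var_{Qf_0^n}(T_B) = (c_\alpha^2 - (\int_{\bar B}f_0)^2)/n \leq c_\alpha^2/n$; using $c_\alpha^2 \lesssim 1/\alpha^2$ for $\alpha \in (0,1)$, Chebyshev's inequality with $t_2 = \sqrt{20/(n\alpha^2\gamma)}$ controls $\Pb_{Qf_0^n}(T_B \geq t_2)$ by a constant times $\gamma$. Summing, the first type error is at most $\gamma/2$ (with appropriate constants).

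For the second type error, fix $f \in H(\beta,L)$ with $\|f - f_0\|_1 \geq \rho$, where $\rho$ is the claimed rate (times a large constant). I would split $\|f-f_0\|_1 = \int_B |f - f_0| + \int_{\bar B}|f-f_0|$ and argue that at least one of the two terms is $\geq \rho/2$. If the tail term dominates, i.e. $\int_{\bar B}|f - f_0| \geq \rho/2$, then since $f, f_0 \geq 0$ one has $|\int_{\bar B}(f-f_0)| $ comparable to $\int_{\bar B}|f-f_0|$ up to the contribution of $\int_{\bar B} f_0$; more precisely $\int_{\bar B}(f - f_0) \geq \int_{\bar B}|f-f_0| - 2\int_{\bar B}f_0 \geq \rho/2 - 2\int_{\bar B}f_0$, which is $\gtrsim \rho$ once $\rho$ exceeds a large constant times $\int_{\bar B}f_0$ (guaranteed by the form of the claimed rate). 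Then $\Eb_{Qf^n}[T_B] \gtrsim \rho \gg t_2$ provided $\rho \gg 1/\sqrt{n\alpha^2}$, and Chebyshev (using $\Var_{Qf^n}(T_B) \leq c_\alpha^2/n \lesssim 1/(n\alpha^2)$) forces $\Pb_{Qf^n}(T_B < t_2)$ to be small. If instead the bulk term dominates, $\int_B |f-f_0| \geq \rho/2$, I would lower-bound $D_\tau(f)$ in terms of $\int_B|f-f_0|$ and $h$: by the Hölder property and the triangle inequality, on each $B_j$ the oscillation of $f - f_0$ is at most $(L+L_0)(2h)^\beta$, so $|p(j) - p_0(j)| \geq \int_{B_j}|f-f_0| - (L+L_0)(2h)^\beta \cdot 2h$, and summing a Cauchy–Schwarz / bias argument yields $\sum_j |p(j)-p_0(j)| \gtrsim \int_B|f-f_0| - C(L,L_0)|B|h^\beta \gtrsim \rho$ once the bias term $|B|h^\beta$ is smaller than a constant times $\rho$. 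Combining with $\sum_j|p(j)-p_0(j)|^2 \leq (\max_j|p(j)-p_0(j)|)\sum_j|p(j)-p_0(j)|$ and $\max_j |p(j)-p_0(j)| \leq $ (a bound of order $h$, since $p(j)$ is an integral over an interval of length $2h$ of a bounded density), I would get $D_\tau(f) \gtrsim \min\{(\sum_j|p(j)-p_0(j)|)^2/h \wedge \text{stuff}, \tau \sum_j|p(j)-p_0(j)|\}$, hence $D_\tau(f) \gtrsim \min\{\rho^2/h, \tau\rho\}$; then by \eqref{Eq Lower bound on the Mean of the Interactive test statistic}, $\Eb_{Qf^n}[D_B] \gtrsim D_\tau(f) - \tau/\sqrt n \gtrsim D_\tau(f)$, and using the variance bound $\Var_{Qf^n}(D_B) \leq 5/(n\alpha^2)^2 + 67 D_\tau(f)/(n\alpha^2)$ and Chebyshev, the event $\{D_B < t_1\}$ has small probability as soon as $D_\tau(f) \gg t_1 = 2\sqrt5/(n\alpha^2\sqrt\gamma)$, i.e. as soon as $D_\tau(f) \gtrsim 1/(n\alpha^2)$.

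The bandwidth optimization is the heart of the argument: the bulk-case analysis needs (i) the bias condition $|B|h^\beta \lesssim \rho$ and (ii) the detectability condition $\min\{\rho^2/h,\, \tau\rho\} \gtrsim 1/(n\alpha^2)$, where $\tau = (n\alpha^2)^{-1/2}$. The second gives $\rho^2/h \gtrsim 1/(n\alpha^2)$, i.e. $\rho \gtrsim \sqrt{h/(n\alpha^2)}$, and $\tau\rho \gtrsim 1/(n\alpha^2)$, i.e. $\rho \gtrsim 1/\sqrt{n\alpha^2}$ (absorbed into the additive term). Balancing $|B|h^\beta$ against $\sqrt{h/(n\alpha^2)}$ as functions of $h$ — one increasing, one decreasing — gives $|B|^2 h^{2\beta} \asymp h/(n\alpha^2)$, i.e. $h^{2\beta-1} \asymp 1/(|B|^2 n\alpha^2)$, so $h \asymp (|B|^2 n\alpha^2)^{-1/(2\beta+1)}$... but note the displayed choice is $h \asymp |B|^{-1/(2\beta+1)}(n\alpha^2)^{-1/(2\beta+1)}$; one should recheck the exponent of $|B|$ in the bias term (it may enter as $|B|^{1/2}$ through a Cauchy–Schwarz step relating $\int_B|f-f_0|$ to $\sum_j|p(j)-p_0(j)|^2$), which reconciles to $h \asymp |B|^{-1/(2\beta+1)}(n\alpha^2)^{-1/(2\beta+1)}$ and to the common rate $\rho \asymp |B|^{(\beta+1)/(2\beta+1)}(n\alpha^2)^{-\beta/(2\beta+1)}$. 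The main obstacle I anticipate is precisely this accounting of the $|B|$-dependence: tracking exactly how the $L_1$ mass $\int_B|f-f_0|$, the $\ell_1$ sum $\sum_j|p(j)-p_0(j)|$, the $\ell_2$ sum $\sum_j|p(j)-p_0(j)|^2$, and the sup-norm $\max_j|p(j)-p_0(j)|$ relate to each other and to $|B|$, $h$, and $N = |B|/(2h)$, so that the bias term, the detection threshold, and the variance term all line up to produce the stated rate; the rest is routine Chebyshev bookkeeping with the moment formulas from Propositions \ref{Prop Test Stat NI} and \ref{Prop Test Stat Int}.
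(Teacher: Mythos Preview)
Your overall architecture is the same as the paper's: control type~I error for $D_B$ and $T_B$ via Chebyshev and the moment bounds, split the alternative according to whether $\int_B|f-f_0|$ or $\int_{\bar B}|f-f_0|$ is large, handle the tail via $T_B$, and for the bulk show that $D_\tau(f)\gtrsim 1/(n\alpha^2)$ suffices (again by Chebyshev with the mean/variance bounds of Proposition~\ref{Prop Test Stat Int}). All of that is correct and matches the paper.

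The gap is in the step where you lower-bound $D_\tau(f)$ in terms of $\int_B|f-f_0|$. Writing $a_j=|p(j)-p_0(j)|$, you invoke $\sum a_j^2\le (\max_j a_j)\sum_j a_j$, but this inequality goes the wrong way: you need a \emph{lower} bound on $\sum a_j^2$ (or on $D_\tau$) to drive the Chebyshev argument, and upper-bounding $\sum a_j^2$ does not help. Relatedly, $D_\tau(f)\ge \tau\sum_j a_j$ is false in general (it only holds when every $a_j\ge\tau$), so the term $\tau\rho$ in your $\min$ is not justified. Finally, the term $\rho^2/h$ has $h$ on the wrong side: with $N=|B|/(2h)$ one has $\sum a_j^2\ge(\sum a_j)^2/N\asymp h(\sum a_j)^2/|B|$, not $(\sum a_j)^2/h$. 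These slips are exactly why your first balance gave $h\asymp(|B|^2 n\alpha^2)^{-1/(2\beta+1)}$ instead of the correct $h\asymp(|B|\,n\alpha^2)^{-1/(2\beta+1)}$.

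The paper closes this gap with two clean inequalities. First, it proves $D_\tau(f)\ge\min\{\sum_j a_j^2,\ \tau\sqrt{\sum_j a_j^2}\}$ (this is the correct elementary lower bound on $\sum a_j\min\{a_j,\tau\}$). Second, it relates $\sum_j a_j^2$ directly to $\int_B|f-f_0|$ by Cauchy--Schwarz on the integral and a H\"older bias estimate:
\[
\Bigl(\int_B|f-f_0|\Bigr)^2 \le |B|\int_B|f-f_0|^2 \le C(L,L_0)\,|B|^2 h^{2\beta}+\frac{|B|}{2h}\sum_j a_j^2.
\]
Combining these yields the detectability condition $\rho\gtrsim \sqrt{|B|/(h\,n\alpha^2)}$ (both branches of the $\min$ give the same thing since $\tau=(n\alpha^2)^{-1/2}$), and balancing this against the bias constraint $|B|h^\beta\lesssim\rho$ produces the stated $h$ and rate. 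Your route via the $\ell_1$ sum $\sum_j a_j$ can be made to work if you replace the wrong-way inequality by Cauchy--Schwarz $\sum a_j^2\ge(\sum a_j)^2/N$ and replace $\tau\sum a_j$ by $\tau\sqrt{\sum a_j^2}$; you will then recover the same condition and the same balance as the paper.
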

This result indicates to choose the optimal set $B = B_{n,\alpha}$ as a level set
\begin{equation}\label{optBInt}
B_{n,\alpha} = \arg \inf_{B \text{ compact set}}
\left\{ \int_{\overline{B}}{ f_0} \geq\vert B\vert ^{\frac{\beta+1}{2\beta+1}}(n\alpha^2)^{-\frac{\beta}{2\beta+1}} + \frac 1{\sqrt{n\alpha^2}} \text{ and } \inf_B f_0 \geq \sup_{\overline{B}} f_0 
\right\}. 
\end{equation}

\subsection{Lower bound in the interactive scenario}

In this subsection we complement the study of $\Ec_{n,\alpha}(f_0,\gamma)$ with a lower bound.
This lower bound will turn out to match the upper bound for several $f_0$, proving the optimality of the test and privacy mechanism proposed in the previous subsection for several $f_0$. See Section \ref{Section Examples} for the optimality.

\begin{theorem}\label{Thrm Interactive Lower bound}
Let $\alpha\in(0,1)$. Assume that $\beta\leq 1$. Recall that $z_\alpha=e^{2\alpha}-e^{-2\alpha}$ and $C_0(B)=\min\{f_0(x) : x\in B\}$.
For all compact sets $B\subset \Rb$ we get
$$
\Ec_{n,\alpha}(f_0,\gamma)\geq C(\gamma,L,L_0) \min \left\{ |B| C_0(B), \vert B \vert^{\frac{\beta+1}{2\beta+1}}(nz_\alpha^2)^{-\frac{\beta}{2\beta+1}}\right\}.
$$
If, moreover, $B$ is satisfying 
\begin{equation}\label{Eq. Condition on B for Inter Lower bound}
\vert B \vert^{\beta/(2\beta+1)}C_0(B) \geq C(nz_\alpha^2)^{-\beta/(2\beta+1)}
\end{equation}
for some $C>0$, it holds
$$
\Ec_{n,\alpha}(f_0,\gamma)\geq C(\gamma,L,L_0)\vert B \vert^{\frac{\beta+1}{2\beta+1}}(nz_\alpha^2)^{-\frac{\beta}{2\beta+1}}.
$$
\end{theorem}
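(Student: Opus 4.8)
The plan is to follow the same reduction-to-a-mixture strategy used in the proof of Theorem \ref{Thrm Lower bound NI}, but with the key difference that we must argue against all sequentially interactive mechanisms $Q\in\Qc_\alpha$, not just non-interactive ones. First I would set up the same perturbation family $f_\nu = f_0 + \delta\sum_{j=1}^N \nu_j\tilde\lambda_j^{-1} v_j$, where $(v_j)$ is an orthonormal family supported in $B$ with $\int v_j = 0$, built from rescaled copies of a fixed wavelet-type bump $\psi$ on a partition $(B_j)_{j=1}^N$ of $B$ with bandwidth $h$, and $\tilde\lambda_j = \max\{\lambda_j/z_\alpha,\sqrt{2h}\}$. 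The eigenvalues $\lambda_j^2$ now come from the averaged information operator $K = \tfrac1n\sum_i K_i^\star K_i$ associated with the interactive kernels; the crucial structural point, available from the information-contraction bounds for interactive channels (e.g.\ Lemma B.3 and the operator-norm estimates in \cite{Butucea_Rohde_Steinberger_2020}), is that the relevant $\chi^2$-type expansion can still be controlled. Lemma \ref{LemmaBSNONI} continues to apply verbatim to certify that $f_\nu \in H(\beta,L)$, is a density, and satisfies $\|f_\nu - f_0\|_1 \gtrsim \delta N$, provided $\delta \lesssim h\min\{C_0(B)/\|\psi\|_\infty,\ h^\beta\}$ (up to the logarithmic factor, which in the interactive regime we expect to absorb since the relevant bound will be $\chi^2 \le 1 + \text{const}$ rather than needing a union bound over $N$ eigendirections — this is what lets the final statement avoid the $[\log(\cdots)]^{-1}$ present in the non-interactive case).

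The heart of the argument is bounding $\Eb_{Q_{f_0}^n}[(dQ_n/dQ_{f_0}^n)^2]$ where $Q_n = \Eb_\nu[\prod_i g_{\nu,i}]$. In the non-interactive case this factorised over $i$ and collapsed to $\prod_k\cosh(n\delta^2 z_\alpha^2) \le \exp(Nn^2\delta^4 z_\alpha^4/2)$, which forced $\delta \lesssim (Nn^2 z_\alpha^4)^{-1/4}$, i.e.\ $N^{3/4}/\sqrt{nz_\alpha^2}$ after optimising in $h$. For interactive mechanisms the likelihood ratio does not factorise, so I would instead bound the squared Hellinger/$\chi^2$ affinity by conditioning successively: writing the joint density as a product of conditionals $g_{\nu,i}(z_i\mid z_{1:i-1})$ and peeling off one coordinate at a time, using at each step that $e^{-\alpha}\le q_i \le e^\alpha$ and the definition of $\tilde\lambda_j$ to get the per-step contraction $\lambda_{k,i}^2/\tilde\lambda_k^2 \le z_\alpha^2$. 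Summing over $i$ replaces $\sum_i K_i^\star K_i$ by $nK$ as before, and the Rademacher structure of $\nu,\nu'$ again yields a bound of the form $\exp(c\,n^2\delta^4 z_\alpha^4 N)$ or, more carefully, $\exp(c\,n\delta^4 z_\alpha^4 \sum_k \lambda_k^2/\tilde\lambda_k^2 \cdot \text{(trace term)})$ — the precise form determines whether the final rate has exponent $\beta/(2\beta+1)$. The reason the interactive rate is \emph{faster} is that here the informative scaling is $\tilde\lambda_j \asymp \sqrt h$ coupled with $\lambda_j^2 \lesssim$ the per-cell mass probing scale, so the constraint becomes $\delta \lesssim (N n z_\alpha^2)^{-1/2}\cdot\sqrt h$-type rather than the non-interactive $N^{3/4}$; I would track the exponents carefully to land on $\delta N \asymp |B|^{(\beta+1)/(2\beta+1)}(nz_\alpha^2)^{-\beta/(2\beta+1)}$ after the optimal choice $h \asymp |B|^{-1/(2\beta+1)}(nz_\alpha^2)^{-1/(2\beta+1)}$.

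Finally, once $\Eb_{Q_{f_0}^n}[(dQ_n/dQ_{f_0}^n)^2] \le 1 + (1-\gamma-b)^2$ is established for all $Q\in\Qc_\alpha$ under the constraint $\delta \le \delta^\star$, the standard two-point-mixture argument (identical to the one spelled out after \eqref{Proof LB NI Cond1}: bound the minimax risk below by $1 - b - \mathrm{TV}(Q_n, Q_{f_0}^n) \ge 1 - b - \sqrt{\chi^2} \ge \gamma$) gives $\Ec_{n,\alpha}(f_0,\gamma) \ge \rho^\star \asymp \delta^\star N$. Taking $\delta^\star$ to be the minimum of the Lemma \ref{LemmaBSNONI} constraint and the $\chi^2$ constraint produces the stated $\min\{|B|C_0(B),\ |B|^{(\beta+1)/(2\beta+1)}(nz_\alpha^2)^{-\beta/(2\beta+1)}\}$, and condition \eqref{Eq. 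Condition on B for Inter Lower bound} is precisely what makes the second term the active one. The main obstacle I anticipate is the non-factorisation of the interactive likelihood ratio: carrying out the successive-conditioning bound while preserving the exact $z_\alpha^2$ contraction per step (rather than a lossy $e^{2\alpha}$) requires the same care as in \cite{Berrett_Butucea_2020_DiscreteDistribTesting}, and getting the trace/variance bookkeeping right is what pins down the improved exponent $2\beta+1$ in place of $4\beta+3$.
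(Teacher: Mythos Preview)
Your proposal has a genuine structural gap. The eigenfunction construction $v_j$ with weights $\tilde\lambda_j$ from the non-interactive proof is \emph{mechanism-dependent}: the operator $K=\tfrac1n\sum_i K_i^\star K_i$ is built from the channels $Q_i(\cdot\mid x)$, and its eigenbasis is chosen after fixing $Q$. For a sequentially interactive $Q\in\Qc_\alpha$, the conditional channel $Q_i(\cdot\mid x_i,z_{1:i-1})$ varies with the random history $z_{1:i-1}$, so there is no single operator $K_i$ to diagonalise, and hence no well-defined $(\lambda_j,v_j)$. Your ``successive conditioning'' idea does not rescue this: at each step the relevant operator changes with the conditioning, so the eigenvalues you want to track are not fixed quantities. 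Relatedly, the $\chi^2$ of the mixture $\bar Q^n$ against $Q_{f_0}^n$ does \emph{not} tensorise for interactive channels the way it does in the non-interactive case (where independence of the $Z_i$ let the product over $i$ factor through $\Eb_{\nu,\nu'}$); the cross terms between $\nu$ and $\nu'$ couple across time in a way that your sketch does not address.

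The paper takes a quite different and much simpler route, and it is instructive to see why it works. First, the perturbation is the \emph{mechanism-independent} family $f_\nu=f_0+\delta\sum_j\nu_j\psi_j$ (no eigenfunctions, no $\tilde\lambda_j$), so the constraints $f_\nu\ge0$, $f_\nu\in H(\beta,L)$ and $\|f_\nu-f_0\|_1=C_1\delta N\sqrt h$ hold for \emph{every} $\nu$ with no $\log N$ loss and no exceptional set $A_b$. Second, instead of the $\chi^2$ of the mixture, the paper bounds $\mathrm{KL}(Q_{f_0}^n,\bar Q^n)$: convexity of KL gives $\mathrm{KL}(Q_{f_0}^n,\bar Q^n)\le 2^{-N}\sum_\nu\mathrm{KL}(Q_{f_0}^n,Q_{f_\nu}^n)$, and then the \emph{chain rule} for KL tensorises each $\mathrm{KL}(Q_{f_0}^n,Q_{f_\nu}^n)$ into a sum over $i$ of conditional KL divergences, each of which depends on $z_{1:i-1}$ but can be bounded uniformly by the conditional $\chi^2$ and the $e^{-\alpha}\le q_i\le e^\alpha$ constraint. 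The Rademacher average then collapses $\tfrac{1}{2^N}\sum_\nu[\sum_k\nu_k\langle g,\psi_k\rangle]^2=\sum_k\langle g,\psi_k\rangle^2\le z_\alpha^2\sum_k\|\psi_k\|_1^2\asymp z_\alpha^2|B|$, giving $\mathrm{KL}\lesssim n\delta^2 z_\alpha^2|B|$ and hence the constraint $\delta\lesssim (nz_\alpha^2|B|)^{-1/2}$. Combining with the density constraint and optimising $h\asymp(|B|\,nz_\alpha^2)^{-1/(2\beta+1)}$ produces the stated bound. The absence of the $\log$ factor that appears in the non-interactive theorem is thus not an artifact to be ``absorbed'' but a direct consequence of using the simpler $\psi_j$-basis and Pinsker/KL rather than the mixture $\chi^2$.
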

The proof is deferred to Appendix \ref{App Proof Lower bound interactive scenario}.

Let us note that the same comment after Theorem~\ref{Thrm Lower bound NI} holds in this case. In all examples, we choose the set $B_{n,\alpha}$ as defined  in (\ref{optBInt}) and show that it checks the condition (\ref{Eq. Condition on B for Inter Lower bound}) giving thus minimax optimality of the testing rates.
\section{Examples}\label{Section Examples}

In this section, we investigate the optimality of our lower and upper bounds for some examples of densities $f_0$.  
For all the examples studied below, our bounds are optimal (up to a constant) in the interactive scenario, and optimal up to a logarithmic factor in the non-interactive scenario.

\begin{table}
\centering
\begin{tabular}{|c|c|p{4cm}|p{4cm}|}
\hline
 & Non-private separation rate & Private separation rate, non-interactive scenario (up to a log factor) & Private separation rate, interactive scenario\\
\hline
$\Uc([a,b])$ & $n^{-2/5}$ & $(n\alpha^2)^{-2/7}$ & $(n\alpha^2)^{-1/3}$   \\
\hline
$\Nc(0,1)$ & $n^{-2/5}$ & $\log(n\alpha^2)^{3/7}(n\alpha^2)^{-2/7}$ & $\log(n\alpha^2)^{1/3}(n\alpha^2)^{-1/3}$ \\
\hline
$\text{Beta}(a,b)$ & $n^{-2/5}$ & $(n\alpha^2)^{-2/7}$ & $(n\alpha^2)^{-1/3}$ \\
\hline
Spiky null & $n^{-2/5}$ & $(n\alpha^2)^{-2/7}$ & $(n\alpha^2)^{-1/3}$  \\
\hline
$\text{Cauchy}(0,a)$ & $(\log n)^{4/5}n^{-2/5}$ & $(n\alpha^2)^{-2/13}$ & $(n\alpha^2)^{-1/5}$\\
\hline
$\text{Pareto}(a,k)$ & $n^{-2k/(2+3k)}$ & $(n\alpha^2)^{-2k/(7k+6)}$ & $(n\alpha^2)^{-k/(3k+2)}$ \\
\hline
$\text{Exp}(\lambda)$ & $n^{-2/5}$ & $\log(n\alpha^2)^{6/7}(n\alpha^2)^{-2/7}$ & $\log(n\alpha^2)^{2/3}(n\alpha^2)^{-1/3}$ \\
\hline
\end{tabular}
\caption{Some examples of separation rates for different choices of densities $f_0$ and $\beta=1$. The non-private separation rates can be found in \cite{Balakrishnan_Wasserman_2019_hypothesisTesting}}
\label{Comparative table}
\end{table}

\bigskip

The densities considered in this section are Hölder continuous with exponent $\beta$ for all $\beta\in(0,1]$ unless otherwise specified. The results are stated for $n$ large enough and $\alpha\in(0,1)$ such that $n\alpha^2\rightarrow +\infty$ as $n\rightarrow \infty$.
They are summarised in Table \ref{Comparative table} for $\beta=1$ and compared to the non-private separation rates. The proofs can be found in Appendix \ref{App Proof examples}.

\begin{example}\label{Ex uniform}
Assume that $f_0$ is the density of the continuous uniform distribution on $[a,b]$ where $a$ and $b$ are two constants satisfying $a<b$, that is 
\begin{equation*}\label{Eq. density Uniform}
f_0(x)=\frac{1}{b-a}I(x\in[a,b]).
\end{equation*}
Taking $B=[a,b]$ in Theorems \ref{Thrm Lower bound NI}, \ref{Thrm Upper Bound NI}, \ref{Thrm Interactive Lower bound} and \ref{Thrm Interactive Upper bound} yields the following bounds on the minimax radius
$$
\left[\log\left(C (n\alpha^2)^\frac{2}{4\beta+3}  \right)\right]^{-1}(n\alpha^2)^{-\frac{2\beta}{4\beta+3}} \lesssim \Ec^{\text{NI}}_{n,\alpha}(f_0,\gamma) \lesssim(n\alpha^2)^{-\frac{2\beta}{4\beta+3}} ,
$$
and 
$$
 \Ec_{n,\alpha}(f_0,\gamma)\asymp (n\alpha^2)^{-\frac{\beta}{2\beta+1}} 
$$
\end{example}

\begin{example}\label{Ex Pareto}
Assume that $f_0$ is the density of the Pareto distribution with parameters $a>0$ and $k>0$, that is 
\begin{equation*}\label{Eq. density Pareto}
f_0(x)=\frac{ka^k}{x^{k+1}}I(x\geq a).
\end{equation*}
It holds
$$
 \left[\log\left(C (n\alpha^2)^{\frac{4\beta+4}{4\beta+3}\cdot \frac{2\beta}{k(4\beta+3)+3\beta+3}+\frac{2}{4\beta+3}}  \right)\right]^{-1}(n\alpha^2)^{-\frac{2k\beta}{k(4\beta+3)+3\beta+3}} \lesssim \Ec^{\text{NI}}_{n,\alpha}(f_0,\gamma) \lesssim (n\alpha^2)^{-\frac{2k\beta}{k(4\beta+3)+3\beta+3}} ,
$$
and 
$$
 \Ec_{n,\alpha}(f_0,\gamma)\asymp (n\alpha^2)^{-\frac{k\beta}{k(2\beta+1)+\beta+1}}.
$$
\end{example}

\begin{example}\label{Ex Exponential}
Assume that $f_0$ is the density of the exponential distribution with parameter $\lambda>0$, that is 
\begin{equation*}\label{Eq. density exponential}
f_0(x)=\lambda \exp(-\lambda x)I(x\geq 0).
\end{equation*}
It holds 
$$
\left[\log\left(C \log(n\alpha^2)^{\frac{4\beta+4}{4\beta+3}}(n\alpha^2)^\frac{2}{4\beta+3}  \right)\right]^{-1}\log(n\alpha^2)^{\frac{3\beta+3}{4\beta+3}}(n\alpha^2)^{-\frac{2\beta}{4\beta+3}}  \lesssim \Ec^{\text{NI}}_{n,\alpha}(f_0,\gamma) \lesssim \log(n\alpha^2)^{\frac{3\beta+3}{4\beta+3}}(n\alpha^2)^{-\frac{2\beta}{4\beta+3}},
$$
and 
$$
 \Ec_{n,\alpha}(f_0,\gamma)\asymp \log(n\alpha^2)^{\frac{\beta+1}{2\beta+1}}(n\alpha^2)^{-\frac{\beta}{2\beta+1}}
$$
\end{example}

\begin{example}\label{Ex Normal}
Assume that $f_0$ is the density of the normal distribution with parameters $0$ and $1$, that is 
\begin{equation*}\label{Eq. density normal}
f_0(x)=\frac{1}{\sqrt{2\pi}}\exp\left( -\frac{x^2}{2} \right).
\end{equation*}
It holds 
$$
 \left[\log\left(C \log(n\alpha^2)^{\frac{4\beta+4}{2(4\beta+3)}}(n\alpha^2)^\frac{2}{4\beta+3}  \right)\right]^{-1}\log(n\alpha^2)^{\frac{3\beta+3}{2(4\beta+3)}}(n\alpha^2)^{-\frac{2\beta}{4\beta+3}} \lesssim \Ec^{\text{NI}}_{n,\alpha}(f_0,\gamma) \lesssim  \log(n\alpha^2)^{\frac{3\beta+3}{2(4\beta+3)}}(n\alpha^2)^{-\frac{2\beta}{4\beta+3}},
$$
and 
$$
 \Ec_{n,\alpha}(f_0,\gamma)\asymp  \log(n\alpha^2)^{\frac{\beta+1}{2(2\beta+1)}}(n\alpha^2)^{-\frac{\beta}{2\beta+1}}
$$
\end{example}

\begin{example}\label{Ex Cauchy}
Assume that $f_0$ is the density of the Cauchy distribution with parameters $0$ and $a>0$, that is 
\begin{equation*}\label{Eq. density Cauchy}
f_0(x)=\frac{1}{\pi a}\frac{a^2}{x^2+a^2}.
\end{equation*}
It holds 
$$
\left[\log\left(C (n\alpha^2)^{\frac{4\beta+4}{4\beta+3}\cdot\frac{2\beta}{7\beta+6} + \frac{2}{4\beta+3} } \right)\right]^{-1}(n\alpha^2)^{-\frac{2\beta}{7\beta+6}}  \lesssim \Ec^{\text{NI}}_{n,\alpha}(f_0,\gamma) \lesssim (n\alpha^2)^{-\frac{2\beta}{7\beta+6}},
$$
and 
$$
 \Ec_{n,\alpha}(f_0,\gamma)\asymp (n\alpha^2)^{-\frac{\beta}{3\beta+2}}
$$
\end{example}

\begin{example}\label{Ex Spiky null}
Assume that the density $f_0$ is given by
\begin{equation*}\label{Eq. density spiky null}
f_0(x)=
\begin{cases}
L_0x & \text{if } 0\leq x\leq \frac{1}{\sqrt{L_0}}\\
2\sqrt{L_0}-L_0x &  \text{if } \frac{1}{\sqrt{L_0}}\leq x\leq \frac{2}{\sqrt{L_0}} \\
0 & \text{otherwise.}
\end{cases}
\end{equation*}
It holds 
$$
\left[\log\left(C (n\alpha^2)^\frac{2}{4\beta+3}  \right)\right]^{-1}(n\alpha^2)^{-\frac{2\beta}{4\beta+3}} \lesssim \Ec^{\text{NI}}_{n,\alpha}(f_0,\gamma) \lesssim  (n\alpha^2)^{-\frac{2\beta}{4\beta+3}},
$$
and 
$$
 \Ec_{n,\alpha}(f_0,\gamma)\asymp (n\alpha^2)^{-\frac{\beta}{2\beta+1}}
$$
\end{example}

\begin{example}\label{Ex Beta}
Assume that $f_0$ is the density of the Beta distribution with parameters $a\geq 1$ and $b\geq 1$, that is 
\begin{equation}\label{Eq. density Beta}
f_0(x)=\frac{1}{B(a,b)}x^{a-1}(1-x)^{b-1}I(0< x<1),
\end{equation}
where $B(\cdot,\cdot)$ is the Beta function. It holds
$$
\left[\log\left(C(n\alpha^2)^\frac{2}{4\beta+3}  \right)\right]^{-1}(n\alpha^2)^{-\frac{2\beta}{4\beta+3}}  \lesssim \Ec^{\text{NI}}_{n,\alpha}(f_0,\gamma) \lesssim (n\alpha^2)^{-\frac{2\beta}{4\beta+3}},
$$
and 
$$
 \Ec_{n,\alpha}(f_0,\gamma)\asymp (n\alpha^2)^{-\frac{\beta}{2\beta+1}}.
$$
Note that the density $f_0$ given by \eqref{Eq. density Beta} can be defined for all $a>0$ and $b>0$. However, $f_0$ is Hölder continuous for no exponent $\beta \in (0,1]$ if $a<1$ or $b<1$. Note also that if $a=1$ and $b=1$ then $f_0$ is the density of the continuous uniform distribution on $[0,1]$, and this case has already been tackled in Example \ref{Ex uniform}.
Now, if $a=1$ and $b>1$ (respectively $a>1$ and $b=1$), one can check that $f_0$ is Hölder continuous with exponent $\beta$ for all $\beta\in (0,\min\{b-1,1 \}]$ (respectively $\beta\in (0,\min\{a-1,1 \}])$. Finally, if $a>1$ and $b>1$ then $f_0$ is is Hölder continuous with exponent $\beta$ for all $\beta\in (0,\min\{a-1,b-1,1 \}]$.
\end{example}

\begin{example}\label{Ex Slow varying function}
Assume that the density $f_0$ is given by $$f_0(x)= \frac{A \log(2)^A}{(x+2) \log^{A+1}(x+2) }I(x\geq 0),$$ for some $A>0$ which can be arbitrarily small but fixed. It holds
$$\left[\log\left(C a_*^{\frac{4\beta+4}{4\beta+3}}(n\alpha^2)^\frac{2}{4\beta+3}  \right)\right]^{-1}\left[\log(a_*) \right]^{-1} a_*^{\frac{3\beta+3}{4\beta+3}}(n\alpha^2)^{-\frac{2\beta}{4\beta+3}} \lesssim \Ec_{n,\alpha}^\text{NI}(f_0,\gamma) \lesssim a_*^{\frac{3\beta+3}{4\beta+3}}(n\alpha^2)^{-\frac{2\beta}{4\beta+3}},$$
where 
$$
a_*=\sup\left\{ a \geq 0 : \frac{(\log 2)^{A}}{ \log^{A}(2+a) } \geq a^{\frac{3\beta+3}{4\beta +3}}(n\alpha^2)^{-\frac{2\beta}{4\beta +3}}+\frac{1}{\sqrt{n\alpha^2}}\right\}.
$$ It is easy to see that $a_* >1 $ is up to some log factors a polynomial of $n \alpha^2$: $a_*^{\frac{3\beta+3}{4\beta +3}} \asymp (n\alpha^2)^{\frac{2\beta}{4\beta +3}}/ \log^A(2+a_*) $ and therefore 
$$
a_*^{\frac{3\beta+3}{4\beta +3}}(n\alpha^2)^{-\frac{2\beta}{4\beta +3}} \asymp \frac 1{\log^A(n \alpha^2)}. 
$$
In the interactive case
$$\left[\log(b_*) \right]^{-1} b_*^{\frac{\beta+1}{2\beta+1}}(n\alpha^2)^{-\frac{\beta}{2\beta+1}}\lesssim \Ec_{n,\alpha}(f_0,\gamma)\lesssim b_*^{\frac{\beta+1}{2\beta+1}}(n\alpha^2)^{-\frac{\beta}{2\beta+1}}, $$
where
$$b_*=\sup\left\{ b \geq 0 : \frac{(\log 2)^{A}}{ \log^{A}(2+b) } \geq b^{\frac{\beta+1}{2\beta +1}}(n\alpha^2)^{-\frac{\beta}{2\beta +1}}+\frac{1}{\sqrt{n\alpha^2}}\right\}.$$ Similarly to the non-interactive case, $b_*$ is up to log factors a polynomial of $n \alpha^2$ and therefore 
$$
b_*^{\frac{\beta+1}{2\beta+1}}(n\alpha^2)^{-\frac{\beta}{2\beta+1}} \asymp  \frac 1{\log^A(n \alpha^2)}.
$$

\end{example}

\appendix
\section{Proofs of Section \ref{Section Non-interactive scenario}}

\subsection{Proof of Proposition \ref{Prop Privacy of the NI Mechanism}}\label{Section Proof Privacy of the NI mechanism}

Let $i\in\llbr 1,n\rrbr$.
Set $\sigma:=2\Vert \psi \Vert_\infty/(\alpha h)$.
The conditional density of $Z_i$ given $X_i=y$ can be written as 
$$
q^{Z_i\mid X_i=y}(z)=\prod_{j=1}^N \frac{1}{2\sigma}\exp\left(-\frac{\vert z_j-\psi_h(x_j-y)\vert}{\sigma}  \right).
$$
Thus, by the reverse and the ordinary triangle inequality,
\begin{align*}
\frac{q^{Z_i\mid X_i=y}(z)}{q^{Z_i\mid X_i=y'}(z)}&=\prod_{j=1}^N \exp\left(\frac{\vert z_j-\psi_h(x_j-y')\vert-\vert z_j-\psi_h(x_j-y)\vert}{\sigma}  \right)\\
&\leq \prod_{j=1}^N \exp\left(\frac{\vert \psi_h(x_j-y')-\psi_h(x_j-y)\vert}{\sigma}  \right)\\
&\leq \exp\left(\frac{1}{\sigma h}\sum_{j=1}^{N}\left\vert \psi\left(\frac{x_j-y'}{h}\right)-\psi\left(\frac{x_j-y}{h}\right)\right\vert  \right)\\
&\leq \exp\left(\frac{1}{\sigma h}\sum_{j=1}^{N}\left[ \left\vert \psi\left(\frac{x_j-y'}{h}\right)\right\vert+\left\vert\psi\left(\frac{x_j-y}{h}\right)\right\vert \right] \right)\\
&\leq \exp\left(\frac{2\Vert \psi\Vert_\infty}{\sigma h}  \right)\\
&\leq \exp(\alpha),
\end{align*}
where the second to last inequality follows from the fact that for a fixed $y$ the quantity $\psi((x_j-y)/h)$ is non-zero for at most one coefficient $j\in\llbr 1,N\rrbr$. This is a consequence of Assumption \ref{Assumption on the kernel}.
This proves that $Z_i$ is an $\alpha$-locally differentially private view of $X_i$ for all $i\in\llbr 1,n\rrbr $.\\
Consider now $i\in\llbr n+1,2n \rrbr$.
For all $j\in\llbr 1,N\rrbr$ it holds
$$
\frac{\Pb\left(Z_i=c_\alpha \mid X_i\notin B \right)}{\Pb\left(Z_i=c_\alpha \mid X_i \in B_j \right)}=1+\frac{1}{c_\alpha}= \frac{2e^\alpha}{e^\alpha+1}.
$$
Since $2 \leq e^\alpha+1\leq 2e^\alpha$, we obtain
$$
e^{-\alpha}\leq 1 \leq \frac{\Pb\left(Z_i=c_\alpha \mid X_i\notin B \right)}{\Pb\left(Z_i=c_\alpha \mid X_i \in B_j \right)}\leq e^\alpha.
$$
It also holds
$$
\frac{\Pb\left(Z_i=-c_\alpha \mid X_i\notin B \right)}{\Pb\left(Z_i=-c_\alpha \mid X_i \in B_j \right)}=1-\frac{1}{c_\alpha}= \frac{2}{e^\alpha+1} \in [e^{-\alpha}, e^{\alpha}].
$$
Now, for all $(j,k)\in\llbr 1, N\rrbr ^2$ it holds 
$$
\frac{\Pb\left(Z_i=c_\alpha \mid X_i \in B_k \right)}{\Pb\left(Z_i=c_\alpha \mid X_i \in B_j \right)}=\frac{\Pb\left(Z_i=-c_\alpha \mid X_i\in B_k \right)}{\Pb\left(Z_i=-c_\alpha \mid X_i \in B_j \right)}=1\in[e^{-\alpha}, e^\alpha].
$$
This proves that $Z_i$ is an $\alpha$-locally differentially private view of $X_i$ for all $i\in\llbr n+1,2n\rrbr$.

\subsection{Proof of Theorem \ref{Thrm Upper Bound NI}}\label{App Proof Upper Bound NI}

\begin{proof}[Proof of Proposition~\ref{Prop Test Stat NI}]
1. Equality \eqref{Eq Mean of the chi2stat NI} follows from the independance of $Z_i$ and $Z_k$ for $i\neq k$ and from $\Eb[Z_{ij}]=\psi_h\ast f(x_j)$.
We now prove \eqref{Eq Variance of the chi2stat NI}. Set $a_{h,j}:=\psi_h\ast f(x_j)$ and let us define
$$
\widehat{U}_B=\frac{1}{n(n-1)}\sum_{i\neq k}\sum_{j=1}^N\left( Z_{ij}-a_{h,j} \right)\left( Z_{kj}-a_{h,j} \right), 
$$
$$
\widehat V_B=\frac{2}{n}\sum_{i=1}^n\sum_{j=1}^N\left( a_{h,j}-f_0(x_j) \right)\left( Z_{ij}-a_{h,j} \right), 
$$ 
and observe that we have 
$$
S_B=\widehat{U}_B+\widehat{V}_B+\sum_{j=1}^N(a_{h,j}-f_0(x_j))^2.
$$
Note that $\Cov(\widehat{U}_B,\widehat{V}_B)=0$.
We thus have
$$
\Var (S_B)=\Var (\widehat{U}_B)+\Var (\widehat{V}_B),
$$
and we will bound from above $\Var (\widehat{U}_B)$ and $\Var (\widehat{V}_B)$ separately.
We begin with  $\Var (\widehat{V}_B)$.
Since $\widehat{V}_B$ is centered, it holds
\begin{align*}
\Var (\widehat{V}_B)&=\Eb[\widehat{V}_B^2]\\
&=\frac{4}{n^2}\sum_{i=1}^n\sum_{j=1}^N\sum_{t=1}^n\sum_{k=1}^N \left( a_{h,j}-f_0(x_j) \right)\left( a_{h,k}-f_0(x_k) \right)\Eb\left[\left( Z_{ij}-a_{h,j} \right)\left( Z_{tk}-a_{h,k} \right)\right].
\end{align*}
Note that if $t\neq i$, the independance of $Z_i$ and $Z_t$ yields
$$
\Eb\left[\left( Z_{ij}-a_{h,j} \right)\left( Z_{tk}-a_{h,k} \right)\right]=0.
$$
Moreover, since the $W_{ij}$, $j=1,\ldots, N$ are independent of $X_i$ and $\Eb[W_{ij}]=0$ we have
\begin{align*}
\Eb\left[\left( Z_{ij}-a_{h,j} \right)\left( Z_{ik}-a_{h,k} \right)\right]&=\Eb\left[ \left( \psi_h\left(x_j-X_i\right)+\frac{2\Vert \psi \Vert_\infty}{\alpha h} W_{ij}-a_{h,j}\right)\left( \psi_h\left(x_k-X_i\right)+\frac{2\Vert \psi \Vert_\infty}{\alpha h} W_{ik}-a_{h,k}\right)\right]\\
&=\Eb\left[\psi_h\left(x_j-X_i\right)\psi_h\left(x_k-X_i\right) \right]-a_{h,k}\Eb\left[ \psi_h\left(x_j-X_i\right) \right]+\frac{4\Vert \psi \Vert_\infty^2}{\alpha^2 h^2}\Eb\left[ W_{ij}W_{ik} \right]\\
&\hspace{2cm} -a_{h,j}\Eb\left[ \psi_h\left(x_k-X_i\right) \right]+a_{h,j}a_{h,k}\\
&=\left[\int \left(\psi_h\left(x_j-y\right)\right)^2f(y)\dd y +\frac{8\Vert \psi \Vert_\infty^2}{\alpha^2 h^2}\right]I(j=k)-a_{h,j}a_{h,k},
\end{align*}
where the last equality is a consequence of Assumption \ref{Assumption on the kernel}.
We thus obtain
\begin{align*}
\Var (\widehat{V}_B)&=\frac{4}{n}\sum_{j=1}^N\left( a_{h,j}-f_0(x_j) \right)^2\left[\int( \psi_h\left(x_j-y\right))^2f(y)\dd y +\frac{8\Vert \psi \Vert_\infty^2}{\alpha^2 h^2}\right]\\
&\hspace{2cm}-\frac{4}{n}\sum_{j=1}^N\sum_{k=1}^N\left( a_{h,j}-f_0(x_j) \right)\left( a_{h,k}-f_0(x_k) \right)a_{h,j}a_{h,k}\\
&=\frac{4}{n}\sum_{j=1}^N\left( a_{h,j}-f_0(x_j) \right)^2\left[\int (\psi_h\left(x_j-y\right))^2f(y)\dd y +\frac{8\Vert \psi \Vert_\infty^2}{\alpha^2 h^2}\right]
-\frac{4}{n}\left(\sum_{j=1}^N\left( a_{h,j}-f_0(x_j) \right)a_{h,j}\right)^2\\
&\leq \frac{4}{n}\sum_{j=1}^N\left( a_{h,j}-f_0(x_j) \right)^2\left[\int (\psi_h\left(x_j-y\right))^2f(y)\dd y +\frac{8\Vert \psi \Vert_\infty^2}{\alpha^2 h^2}\right].
\end{align*}
Now,  $\int (\psi_h\left(x_j-y\right))^2f(y)\dd y\leq \Vert \psi_h\Vert_\infty^2\leq \Vert \psi\Vert_\infty^2/h^2\leq \Vert \psi\Vert_\infty^2/(\alpha^2h^2)$ if $\alpha\in(0,1]$.
We finally obtain 
$$
\Var (\widehat{V}_B)\leq \frac{36\Vert \psi \Vert_\infty^2}{n\alpha^2 h^2}\sum_{j=1}^N\left( a_{h,j}-f_0(x_j) \right)^2.
$$
We now bound from above $\Var (\widehat{U}_B)$.
One can rewrite $\widehat{U}_B$ as
$$
\widehat{U}_B=\frac{1}{n(n-1)}\sum_{i\neq k}h(Z_i,Z_k),
$$
where 
$$
h(Z_i,Z_k)=\sum_{j=1}^N\left( Z_{ij}-a_{h,j} \right)\left( Z_{kj}-a_{h,j} \right).
$$
Using a result for the variance of a $U$-statistic (see for instance Lemma A, p.183 in \cite{Serfling_1980}), we have
$$
\binom{n}{2}\Var(\widehat{U}_B)=2(n-2)\zeta_1+\zeta_2,
$$
where 
$$
\zeta_1=\Var\left(\Eb\left[ h(Z_1,Z_2)\mid Z_1 \right] \right) , \text{ and } \zeta_2=\Var\left( h(Z_1,Z_2) \right).
$$
We have $\zeta_1=0$ since $\Eb\left[ h(Z_1,Z_2)\mid Z_1 \right] =0$ and thus 
$$
\Var (\widehat{U}_B)=\frac{2}{n(n-1)}\Var\left( h(Z_1,Z_2) \right).
$$
Write
\begin{align*}
h(Z_1,Z_2)&= \sum_{j=1}^N \left( \psi_h\left(x_j-X_1\right)+\frac{2\Vert \psi \Vert_\infty}{\alpha h} W_{1j}-a_{h,j}\right)\left( \psi_h\left(x_j-X_2\right)+\frac{2\Vert \psi \Vert_\infty}{\alpha h} W_{2j}-a_{h,j}\right)\\
&=\sum_{j=1}^N\left(\psi_h\left(x_j-X_1\right)-a_{h,j}\right)\left(\psi_h\left(x_j-X_2\right)-a_{h,j}\right) +\frac{4\Vert \psi \Vert_\infty^2}{\alpha^2 h^2}\sum_{j=1}^N W_{1j}W_{2j} \\
&\hspace{1cm} +\frac{2\Vert \psi \Vert_\infty}{\alpha h}\sum_{j=1}^N W_{1j}(\psi_h(x_j-X_2)-a_{h,j})+\frac{2\Vert \psi \Vert_\infty}{\alpha h}\sum_{j=1}^N W_{2j}(\psi_h(x_j-X_1)-a_{h,j})\\
&=: \tilde{T}_1+\tilde{T}_2+\tilde{T}_3+\tilde{T}_4.
\end{align*}
We thus have $\Var(h(Z_1,Z_2))=\sum_{i=1}^4 \Var(\tilde{T}_i)+2\sum_{i<j}\Cov(\tilde{T}_i,\tilde{T}_j)$. 
Observe that $\Cov(\tilde{T}_i,\tilde{T}_j)=0$ for $i<j$ and $\Var(\tilde{T}_3)=\Var(\tilde{T}_4)$.
We thus have
$$
\Var(h(Z_1,Z_2))=\Var(\tilde{T}_1)+\Var(\tilde{T}_2)+2\Var(\tilde{T}_3).
$$
The independence of the random variables $(W_{ij})_{i,j}$ yields 
$$
\Var(\tilde{T}_2)=\frac{64\Vert \psi \Vert_\infty^4N}{\alpha^4 h^4}.
$$
The independence of the random variables $(W_{ij})_{i,j}$ and  their independence with $X_2$ yield 
\begin{align*}
\Var(\tilde{T}_3)&=\Eb\left[\tilde{T}_3^2 \right]\\
&=\frac{4\Vert \psi \Vert_\infty^2}{\alpha^2 h^2}\Eb\left[ \sum_{j=1}^N W_{1j}(\psi_h(x_j-X_2)-a_{h,j})\sum_{k=1}^N W_{1k}(\psi_h(x_k-X_2)-a_{h,k}) \right]\\
&=\frac{4\Vert \psi \Vert_\infty^2}{\alpha^2 h^2}\sum_{j=1}^N\sum_{k=1}^N\Eb\left[ W_{1j}W_{1k}\right]\Eb\left[ (\psi_h(x_j-X_2)-a_{h,j})(\psi_h(x_k-X_2)-a_{h,k}) \right]\\
&=\frac{8\Vert \psi \Vert_\infty^2}{\alpha^2 h^2}\sum_{j=1}^N\Eb\left[ (\psi_h(x_j-X_2)-a_{h,j})^2 \right]\\
&\leq \frac{8\Vert \psi \Vert_\infty^2}{\alpha^2 h^2}\sum_{j=1}^N\Eb\left[ (\psi_h(x_j-X_2))^2 \right].
\end{align*}
Now, since $y\mapsto \psi_h(x_j-y)$ is null outside $B_j$ (consequence of Assumption \ref{Assumption on the kernel}), it holds
$$
\sum_{j=1}^N\Eb\left[ (\psi_h(x_j-X_2))^2 \right]=\sum_{j=1}^N\int_{B_j}\left(\psi_h(x_j-y) \right)^2f(y) dy\leq \Vert \psi_h\Vert_\infty^2\sum_{j=1}^N\int_{B_j}f\leq \Vert \psi_h\Vert_\infty^2,
$$
and thus 
$$
\Var(\tilde{T}_3)\leq \frac{8\Vert \psi \Vert_\infty^4}{\alpha^2 h^4}.
$$
By independence of $X_1$ and $X_2$, it  holds $\Eb[\tilde{T}_1]=0$, and
\begin{align*}
\Var(\tilde{T}_1)&=\Eb\left[  \tilde{T}_1^2\right]\\
&=\sum_{j=1}^N\sum_{k=1}^N\Eb\left[\left(\psi_h\left(x_j-X_1\right)-a_{h,j}\right)\left(\psi_h\left(x_j-X_2\right)-a_{h,j}\right)\left(\psi_h\left(x_k-X_1\right)-a_{h,k}\right)\left(\psi_h\left(x_k-X_2\right)-a_{h,k}\right) \right]\\
&=\sum_{j=1}^N\sum_{k=1}^N\Eb\left[\left(\psi_h\left(x_j-X_1\right)-a_{h,j}\right)\left(\psi_h\left(x_k-X_1\right)-a_{h,k}\right)\right]\Eb\left[\left(\psi_h\left(x_j-X_2\right)-a_{h,j}\right)\left(\psi_h\left(x_k-X_2\right)-a_{h,k}\right) \right]\\
&= \sum_{j=1}^N\sum_{k=1}^N\left[\int \psi_h(x_j-y)\psi_h(x_k-y)f(y)\dd y- a_{h,j}a_{h,k} \right]^2\\
&=\sum_{j=1}^N\sum_{k=1}^N \left( \int \psi_h(x_j-y)\psi_h(x_k-y)f(y)\dd y \right)^2-2\sum_{j=1}^N\sum_{k=1}^Na_{h,j}a_{h,k}\int \psi_h(x_j-y)\psi_h(x_k-y)f(y)\dd y\\
&\hspace{2cm}+\sum_{j=1}^N\sum_{k=1}^Na_{h,j}^2a_{h,k}^2.
\end{align*}
Assumption \ref{Assumption on the kernel} yields $\int \psi_h(x_j-y)\psi_h(x_k-y)f(y)\dd y=0$ if $j\neq k$. 
We thus obtain
\begin{align*}
\Var(\tilde{T}_1)&=\sum_{j=1}^N\left( \int (\psi_h(x_j-y))^2f(y)\dd y \right)^2-2\sum_{j=1}^Na_{h,j}^2\int \left(\psi_h(x_j-y)\right)^2f(y)\dd y+\left(\sum_{j=1}^Na_{h,j}^2\right)^2.
\end{align*}
Now, since $y\mapsto \psi_h(x_j-y)$ is  null outside $B_j$ (consequence of Assumption \ref{Assumption on the kernel}),  observe that 
$$
\sum_{j=1}^N\left( \int (\psi_h(x_j-y))^2f(y)\dd y \right)^2\leq \frac{\Vert \psi\Vert_\infty^4}{h^4}\sum_{j=1}^N\left(\int_{B_j}f \right)^2\leq \frac{\Vert \psi\Vert_\infty^4}{h^4}\sum_{j=1}^N\int_{B_j}f \leq \frac{\Vert \psi\Vert_\infty^4}{h^4},
$$
and 
$$
\left(\sum_{j=1}^Na_{h,j}^2\right)^2=\left(\sum_{j=1}^N \left( \int \psi_h(x_j-y)f(y)\dd y \right)^2\right)^2\leq \frac{\Vert \psi\Vert_\infty^4}{h^4}\left[\sum_{j=1}^N\left(\int_{B_j}f \right)^2\right]^2\leq \frac{\Vert \psi\Vert_\infty^4}{h^4},
$$
yielding $\Var(\tilde{T}_1)\leq 2\frac{\Vert \psi\Vert_\infty^4}{h^4}$.
We thus have
$$
\Var(\widehat{U}_B)\leq \frac{2}{n(n-1)}\left[2\frac{\Vert \psi\Vert_\infty^4}{h^4}+ \frac{64\Vert \psi \Vert_\infty^4N}{\alpha^4 h^4}+ \frac{16\Vert \psi \Vert_\infty^4}{\alpha^2 h^4} \right]\leq \frac{164\Vert \psi \Vert_\infty^4N}{n(n-1)\alpha^4 h^4}.
$$
Finally,
$$
\Var (S_B)\leq \frac{36\Vert \psi \Vert_\infty^2}{n\alpha^2 h^2}\sum_{j=1}^N\left( a_{h,j}-f_0(x_j) \right)^2+\frac{164\Vert \psi \Vert_\infty^4N}{n(n-1)\alpha^4 h^4}.
$$

2.  For all $i\in\llbr n+1,2n\rrbr$ it holds
\begin{align*}
\Eb_{Q_{f}^n}[Z_i]&=\Eb\left[Z_i\mid X_i\notin B \right]\Pb\left(X_i \notin B \right)+\sum_{j=1}^N\Eb\left[Z_i\mid X_i\in B_j \right]\Pb\left(X_i\in B_j \right)\\
&=\left[ c_\alpha\cdot \frac{1}{2}\left(1+\frac{1}{c_\alpha} \right)-c_\alpha\cdot \frac{1}{2}\left(1-\frac{1}{c_\alpha} \right)\right]\Pb\left(X_i \notin B \right)+\sum_{j=1}^N\left[c_\alpha\cdot\frac{1}{2}-c_\alpha\cdot\frac{1}{2} \right]\Pb\left(X_i\in B_j \right)\\
&=\Pb\left(X_i \notin B \right).
\end{align*}
This yields $\Eb_{Q_{f}^n}[T_B] =\int_{\overline B} (f-f_0)$, and using the independence of the $Z_i$, $i=n+1,\ldots,2n$ we obtain
$$
\Var_{Q_{f}^n}[T_B]=\frac{1}{n^2}\sum_{i=n+1}^{2n}\Var(Z_i)=\frac{1}{n^2}\sum_{i=n+1}^{2n}\left[\Eb[Z_i^2]-\Eb[Z_i]^2 \right]=\frac{1}{n}\left(c_\alpha^2-\left(\int_{\overline B} f \right)^2  \right).
$$
\end{proof}

We can now proove Theorem \ref{Thrm Upper Bound NI}.
We first prove that the choice of $t_1$ and $t_2$ in \eqref{Eq Non Interactive t_1 and t_2} gives $\Pb_{Q_{f_0}^n}(\Phi=1)\leq \gamma/2$.
Since $\Eb_{Q_{f_0}^n}[T_B]=0$, Chebyshev's inequality and Proposition~\ref{Prop Test Stat NI} yield for $\alpha\in(0,1]$
$$
 \Pb_{Q_{f_0}^n}(T_B \geq t_2)\leq \Pb_{Q_{f_0}^n}(\vert T_B \vert \geq t_2)\leq \frac{\Var_{Q_{f_0}^n}(T_B)}{t_2^2}\leq \frac{c_\alpha^2 }{n t_2^2}\leq \frac{5 }{n \alpha^2 t_2^2}= \frac{\gamma}{4}.
 $$
  If $t_1>\Eb_{Q_{f_0}^n}[S_B]= \sum_{j=1}^N\left([\psi_h\ast f_0](x_j)-f_0(x_j) \right)^2$, then Chebychev's inequality and Proposition~\ref{Prop Test Stat NI} yield
 \begin{align*}
 \Pb_{Q_{f_0}^n}(S_B \geq t_1)&\leq \Pb_{Q_{f_0}^n}(\vert S_B -\Eb_{Q_{f_0}^n}[S_B]\vert \geq t_1-\Eb_{Q_{f_0}^n}[S_B])\\
 &\leq \frac{\Var_{Q_{f_0}^n}(S_B)}{(t_1-\Eb_{Q_{f_0}^n}[S_B])^2}\\
 &\leq \frac{\frac{36\Vert \psi\Vert_\infty^2}{n \alpha^2 h^2}\sum_{j=1}^N\left([\psi_h\ast f_0](x_j)-f_0(x_j) \right)^2}{\left(t_1-\sum_{j=1}^N\left([\psi_h\ast f_0](x_j)-f_0(x_j) \right)^2\right)^2}+\frac{\frac{164\Vert \psi\Vert_\infty^4N}{n(n-1)\alpha^4h^4}}{\left(t_1-\sum_{j=1}^N\left([\psi_h\ast f_0](x_j)-f_0(x_j) \right)^2\right)^2}.
 \end{align*}
 Observe that 
 $$
 t_1\geq \sum_{j=1}^N\left([\psi_h\ast f_0](x_j)-f_0(x_j) \right)^2+\max\left\{\sqrt{ \frac{288\Vert \psi\Vert_\infty^2}{\gamma n\alpha^2 h^2} \sum_{j=1}^N\left([\psi_h\ast f_0](x_j)-f_0(x_j) \right)^2}, \sqrt{\frac{1312\Vert \psi\Vert_\infty^4N}{\gamma n(n-1)\alpha^4 h^4} }\right\}.
 $$
 Indeed for $f\in H(\beta,L)$ with $\beta\leq 1$ it holds $\left\vert[\psi_h\ast f](x_j)-f(x_j) \right\vert \leq LC_\beta h^{\beta}$ for all $j\in\llbr 1,N\rrbr$ where $C_\beta=\int_{-1}^1\vert u\vert^\beta \vert \psi(u)\vert \dd u$, and thus using $ab\leq a^2/2+b^2 /2$ we obtain
 \begin{align*}
 &\sum_{j=1}^N\left([\psi_h\ast f_0](x_j)-f_0(x_j) \right)^2+\max\left\{\sqrt{ \frac{288\Vert \psi\Vert_\infty^2}{\gamma n\alpha^2 h^2} \sum_{j=1}^N\left([\psi_h\ast f_0](x_j)-f_0(x_j) \right)^2}, \sqrt{\frac{1312\Vert \psi\Vert_\infty^4N}{\gamma n(n-1)\alpha^4 h^4} }\right\}\\
 &\hspace{1cm}\leq L_0^2C_\beta^2Nh^{2\beta}+\max\left\{\frac{1}{2}L_0^2C_\beta^2Nh^{2\beta}+\frac{144\Vert \psi\Vert_\infty^2}{\gamma n\alpha^2 h^2}, \sqrt{\frac{1312\Vert \psi\Vert_\infty^4N}{\gamma n(n-1)\alpha^4 h^4} }\right\}\\
 &\hspace{1cm}\leq \frac{3}{2}L_0^2C_\beta^2Nh^{2\beta}+\frac{144\Vert \psi\Vert_\infty^2}{\gamma n\alpha^2 h^2}+ \frac{52\Vert\psi\Vert_\infty^2\sqrt{N}}{\sqrt{\gamma}n\alpha^2h^2}\\
&\hspace{1cm}\leq \frac{3}{2}L_0^2C_\beta^2Nh^{2\beta}+ \frac{196\Vert\psi\Vert_\infty^2\sqrt{N}}{\gamma n\alpha^2h^2}=t_1.
\end{align*}
Then it holds
\begin{align*}
  \Pb_{Q_{f_0}^n}(S_B \geq t_1)&\leq \frac{\frac{36\Vert \psi\Vert_\infty^2}{n \alpha^2 h^2}\sum_{j=1}^N\left([\psi_h\ast f_0](x_j)-f_0(x_j) \right)^2}{(t_1-\sum_{j=1}^N\left([\psi_h\ast f_0](x_j)-f_0(x_j) \right)^2)^2}+\frac{\frac{164\Vert \psi\Vert_\infty^4N}{n(n-1)\alpha^4h^4}}{(t_1-\sum_{j=1}^N\left([\psi_h\ast f_0](x_j)-f_0(x_j) \right)^2)^2}\\
  &\leq \frac{\gamma}{8}+\frac{\gamma}{8}\leq \frac{\gamma}{4},
\end{align*}
and thus
$$
\Pb_{Q_{f_0}^n}(\Phi=1)\leq  \Pb_{Q_{f_0}^n}(T_B \geq t_2) +\Pb_{Q_{f_0}^n}(S_B \geq t_1)\leq \frac{\gamma}{2}.
$$
We now exhibit $\rho_1, \rho_2>0$ such that
$$
\begin{cases}
\int_{B}\vert f-f_0\vert\geq \rho_1 \Rightarrow \Pb_{Q_f^n}(S_B< t_1)\leq \gamma/2\\
\int_{\bar{B}}\vert f-f_0\vert\geq \rho_2 \Rightarrow \Pb_{Q_f^n}(T_B< t_2)\leq \gamma/2.
\end{cases}
$$
In this case, for all $f\in H(\beta,L)$ satisfying $\Vert f-f_0\Vert_1\geq \rho_1+\rho_2$ it holds
$$
\Pb_{Q_{f_0}^n}(\Phi=1)+\Pb_{Q_f^n}(\Phi=0)\leq \frac{\gamma}{2}+ \min\left\{ \Pb_{Q_f^n}( S_B< t_1),  \Pb_{Q_f^n}(T_B< t_2)\right\}\leq \frac{\gamma}{2}+\frac{\gamma}{2}=\gamma,
$$
since $\int_B\vert f-f_0\vert+\int_{\bar{B}}\vert f-f_0\vert=\Vert f-f_0\Vert_1\geq \rho_1+\rho_2$ implies $\int_{B}\vert f-f_0\vert\geq \rho_1$ or $ \int_{\bar{B}}\vert f-f_0\vert\geq \rho_2$.
Consequently, $\rho_1+\rho_2$ will provide an upper bound on $\Ec_{n,\alpha}^{\text{NI}}(f_0,\gamma)$.

If $\int_{\overline B} (f-f_0)=\Eb_{Q_{f}^n}[T_B]>t_2$ then Chebychev's inequality yields
\begin{align*}
\Pb_{Q_{f}^n}(T_B < t_2)&=\Pb_{Q_{f}^n}\left(\Eb_{Q_{f}^n}[T_B]-T_B > \Eb_{Q_{f}^n}[T_B]-t_2\right)\\
&\leq \Pb_{Q_{f}^n}\left(\left\vert \Eb_{Q_{f}^n}[T_B]- T_B \right\vert > \Eb_{Q_{f}^n}[ T_B]-t_2\right)\\
&\leq \frac{\Var_{Q_{f}^n}(T_B)}{\left(\Eb_{Q_{f}^n}[ T_B]-t_2\right)^2}\\
&\leq \frac{c_\alpha^2 }{n \left(\int_{\overline B} (f-f_0)-t_2 \right)^2}.
\end{align*}
Now, observe that
$$
\int_{\bar{B}}(f-f_0)\geq \int_{\bar{B}}\vert f-f_0\vert-2\int_{\bar{B}}f_0.
$$
Thus, setting
$$
\rho_2=2\int_{\bar{B}}f_0+\left( 1+\frac{1}{\sqrt{2}}\right)t_2,
$$
we obtain that $\int_{\bar{B}}\vert f-f_0\vert\geq \rho_2$ implies 
$$
\Pb_{Q_{f}^n}(T_B < t_2)\leq \frac{2c_\alpha^2}{n t_2^2}\leq \frac{10}{n\alpha^2 t_2^2}=\frac{\gamma}{2}.
$$
We now exhibit $\rho_1$ such that $\int_B\vert f-f_0\vert\geq \rho_1$ implies $\Pb_{Q_{f}^n}(S_B < t_1)\leq \gamma/2.$
First note that if the following relation holds
\begin{equation}\label{Proof NI Upper bound Relation 1}
\Eb_{Q_{f}^n}[S_B]= \sum_{j=1}^N\left\vert[\psi_h\ast f](x_j)-f_0(x_j) \right\vert^2 \geq t_1 + \sqrt{\frac{2\Var_{Q_{f}^n}(S_B)}{\gamma}},   
\end{equation}
then Chebychev's inequality yields
$$
\Pb_{Q_{f}^n}(S_B < t_1)\leq \Pb_{Q_{f}^n}\left( S_B\leq \Eb_{Q_{f}^n}[S_B]-\sqrt{\frac{2\Var_{Q_{f}^n}(S_B)}{\gamma}} \right)\leq \frac{\gamma}{2}.
$$
Using $\sqrt{a+b}\leq \sqrt{a}+\sqrt{b}$ for all $a,b>0$ and $ab\leq a^2/2+b^2/2$  we have
\begin{align*}
\sqrt{\frac{2\Var_{Q_{f}^n}(S_B)}{\gamma}}&\leq \sqrt{\frac{72\Vert \psi\Vert_\infty^2}{\gamma n \alpha^2 h^2}\sum_{j=1}^N\left([\psi_h\ast f](x_j)-f_0(x_j) \right)^2+\frac{328\Vert \psi\Vert_\infty^4N}{\gamma n(n-1)\alpha^4h^4}}\\
&\leq \sqrt{\frac{72\Vert \psi\Vert_\infty^2}{\gamma n \alpha^2 h^2}\sum_{j=1}^N\left([\psi_h\ast f](x_j)-f_0(x_j) \right)^2}+\sqrt{\frac{656\Vert \psi\Vert_\infty^4N}{\gamma n^2\alpha^4h^4}}\\
&\leq \frac{1}{2}\sum_{j=1}^N\left([\psi_h\ast f](x_j)-f_0(x_j) \right)^2+\frac{36\Vert \psi\Vert_\infty^2}{\gamma n \alpha^2 h^2}+\frac{26\Vert \psi\Vert_\infty^2\sqrt{N}}{\sqrt{\gamma}n\alpha^2h^2}\\
&\leq \frac{1}{2}\sum_{j=1}^N\left([\psi_h\ast f](x_j)-f_0(x_j) \right)^2+\frac{62\Vert \psi\Vert_\infty^2\sqrt{N}}{\gamma n\alpha^2h^2}.
\end{align*}
Thus, if 
\begin{equation}\label{Proof NI Upper bound Relation 2}
\sum_{j=1}^N\left\vert[\psi_h\ast f](x_j)-f_0(x_j) \right\vert^2 \geq 2\left[t_1 + \frac{62\Vert \psi\Vert_\infty^2\sqrt{N}}{\gamma n\alpha^2h^2}\right]
\end{equation}
then \eqref{Proof NI Upper bound Relation 1} holds and we have $\Pb_{Q_{f}^n}(S_B < t_1)\leq \gamma/2$.
We now link $\sum_{j=1}^N\left\vert[\psi_h\ast f](x_j)-f_0(x_j) \right\vert^2$ to $\int_B\vert f-f_0\vert$.
According to Cauchy-Schwarz inequality we have
$$
\left( \sum_{j=1}^N\left\vert[\psi_h\ast f](x_j)-f_0(x_j) \right\vert \right)^2 \leq N \sum_{j=1}^N\left\vert[\psi_h\ast f](x_j)-f_0(x_j) \right\vert^2.
$$
We also have
\begin{align*}
\left\vert \int_B \vert f-f_0\vert-\sum_{j=1}^N2h\vert \psi_h\ast f(x_j)-f_0(x_j)\vert  \right\vert &=
\left\vert \sum_{j=1}^N \int_{B_j} \vert f-f_0\vert-\sum_{j=1}^N2h\vert \psi_h\ast f(x_j)-f_0(x_j)\vert  \right\vert \\
&= \left\vert \sum_{j=1}^N \int_{B_j}\left( \vert f(x)-f_0(x)\vert-\vert \psi_h\ast f(x_j)-f_0(x_j)\vert \right)  \dd x\right\vert \\
&\leq \sum_{j=1}^N \int_{B_j}\left\vert  f(x)-f_0(x)- \psi_h\ast f(x_j)+f_0(x_j)  \right\vert \dd x\\
&\leq \sum_{j=1}^N \int_{B_j}\left(\vert f(x)-f(x_j)\vert +\vert f(x_j) - \psi_h\ast f(x_j)\vert+ \vert f_0(x_j)-f_0(x) \vert \right) \dd x\\
&\leq \left[1 + C_\beta  +\frac{L_0}{L} \right]L h^\beta \vert B\vert.
\end{align*}
We thus have 
$$
\sum_{j=1}^N\left\vert[\psi_h\ast f](x_j)-f_0(x_j) \right\vert^2\geq \frac{1}{4Nh^2}\left( \int_B \vert f-f_0\vert- \left[1 + C_\beta  +\frac{L_0}{L} \right]L h^\beta \vert B\vert\right)^2.
$$
Thus, if 
$$
\int_B \vert f-f_0\vert\geq \left[1 + C_\beta  +\frac{L_0}{L} \right]L h^\beta \vert B\vert + 2h\sqrt{N}\sqrt{ 2t_1 + \frac{124\Vert \psi\Vert_\infty^2\sqrt{N}}{\gamma n\alpha^2h^2} } =:\rho_1
$$
then \eqref{Proof NI Upper bound Relation 2} holds and we have $\Pb_{Q_{f}^n}(S_B < t_1)\leq \gamma/2$.
Consequently
\begin{align*}
    \Ec_{n,\alpha}^{\text{NI}}(f_0,\gamma) & \leq \rho_1+\rho_2 \\
    &\leq \left[1 + C_\beta  +\frac{L_0}{L} \right]L h^\beta \vert B\vert + 2h\sqrt{N}\sqrt{ 2t_1 + \frac{124\Vert \psi\Vert_\infty^2\sqrt{N}}{\gamma n\alpha^2h^2} }+ 2\int_{\bar{B}}f_0+\left( 1+\frac{1}{\sqrt{2}}\right)t_2\\
    & \leq C(L,L_0,\beta,\gamma,\psi)\left[ h^\beta \vert B\vert + Nh^{\beta+1}+ \frac{N^{3/4}}{\sqrt{n\alpha^2}} + \int_{\bar{B}}f_0+ \frac{1}{\sqrt{n\alpha^2}}\right]\\
    &\leq C(L,L_0,\beta,\gamma,\psi)\left[ h^\beta \vert B\vert + \frac{\vert B\vert^{3/4}}{h^{3/4} \sqrt{n\alpha^2}} + \int_{\bar{B}}f_0 + \frac{1}{\sqrt{n\alpha^2}}\right]
\end{align*}
where we have used $\sqrt{a+b}\leq \sqrt{a}+\sqrt{b}$ for $a,b>0$ to obtain the second to last inequality.
Taking $h\asymp \vert B \vert^{-1/(4\beta+3)}(n\alpha^2)^{-2/(4\beta+3)}$ yields
$$
\Ec_{n,\alpha}^{\text{NI}}(f_0,\gamma)\leq C(L,L_0,\beta,\gamma,\psi)\left[\vert B \vert^{\frac{3\beta+3}{4\beta+3}}(n\alpha^2)^{-\frac{2\beta}{4\beta+3}} + \int_{\overline B} f_0+ \frac{1}{\sqrt{n\alpha^2}}\right].
$$

\subsection{Proof of Lemma~\ref{LemmaBSNONI}}
For $j=1,\ldots,N$, write 
$$
v_j=\sum_{k=1}^N a_{kj}\psi_k.
$$
Note that since $(\psi_1,\dots,\psi_N)$ and $(v_1,\ldots,v_N)$ are two orthonormal bases of $W_N$, the  matrix $(a_{kj})_{kj}$ is orthogonal.
We can write
$$
f_\nu(x)= f_0(x)+\delta\sum_{j=1}^N\sum_{k=1}^N\frac{\nu_ja_{kj}}{\tilde{\lambda}_j} \psi_k(x), \quad x\in\Rb.
$$
Define
$$
A_b=\left\{\nu\in \{-1,1\}^N : \left\vert \sum_{j=1}^N\frac{\nu_ja_{kj}}{\tilde{\lambda}_j} \right\vert\leq \frac{1}{\sqrt{h}}\sqrt{\log\left( \frac{2N}{b} \right)}\text{ for all } 1\leq k\leq N\right\}.
$$
The union bound and Hoeffding inequality yield
\begin{align*}
\Pb_\nu(A_b^c)&\leq \sum_{k=1}^N\Pb\left( \left\vert \sum_{j=1}^N\frac{\nu_ja_{kj}}{\tilde{\lambda}_j} \right\vert > \frac{1}{\sqrt{h}}\sqrt{\log\left( \frac{2N}{b} \right)} \right)\\
& \leq \sum_{k=1}^N 2\exp\left(-\frac{2\log\left( \frac{2N}{b} \right)}{h\sum_{j=1}^N4\frac{a_{kj}^2}{\tilde{\lambda}_j^2}}  \right)\\
&\leq b,
\end{align*}
 where the last inequality follows from $\tilde{\lambda}_j^2\geq 2h$ for all $j$ and $\sum_{j=1}^Na_{kj}^2=1$. 
We thus have $\Pb_\nu(A_b)\geq 1-b$.

We now prove $i)$. 
Since $\int \psi_k= 0$ for all $k=1,\ldots,n$, it holds $\int f_\nu=\int f_0=1$ for all $\nu$.
Since $\text{Supp}(\psi_k)=B_k$ for all $k=1,\ldots,N$, it holds $f_\nu \equiv f_0$ on $B^c$ and thus $f_\nu$ is non-negative on $B^c$.
Now, for $x\in B_k$ it holds
$$
f_\nu(x)= f_0(x)+\delta\sum_{j=1}^N\frac{\nu_ja_{kj}}{\tilde{\lambda}_j} \psi_k(x)\geq C_0(B)-\frac{\delta\Vert \psi\Vert_\infty}{\sqrt{h}}\left\vert \sum_{j=1}^N\frac{\nu_ja_{kj}}{\tilde{\lambda}_j} \right\vert.
$$ 
Moreover, for any $\nu\in A_b$, we have 
$$
\frac{\delta\Vert \psi\Vert_\infty}{\sqrt{h}}\left\vert \sum_{j=1}^N\frac{\nu_ja_{kj}}{\tilde{\lambda}_j} \right\vert\leq  \frac{\delta\Vert \psi\Vert_\infty}{h} \sqrt{\log\left( \frac{2N}{b} \right)}\leq C_0(B) 
$$
since $\delta$ is assumed to satisfy $\delta\leq \frac{h}{\sqrt{\log(2N/b)}}\min\left\{ \frac{C_0(B)}{\Vert\psi\Vert_\infty} , \frac{1}{2}\left(1-\frac{L_0}{L} \right)h^\beta\right\}$. 
Thus, $f_\nu$ is non-negative on $\Rb$ for all $\nu\in A_b$.

To prove $ii)$, we have to show that $\vert f_\nu(x)-f_\nu(y)\vert \leq L \vert x-y\vert^\beta$, for all $\nu\in A_b$, for all $x,y\in\Rb$.
Since $f_\nu\equiv f_0$ on $B^c$ and $f_0\in H(\beta,L_0)$, this result is trivial for $x,y\in B^c$.
If $x\in B_l$ and $y\in B_k$ it holds 
\begin{align*}
\vert f_\nu(x)-f_\nu(y)\vert &\leq \vert f_0(x)-f_0(y)\vert + \left\vert \delta\sum_{j=1}^N\frac{\nu_ja_{lj}}{\tilde{\lambda}_j} \psi_l(x) - \delta\sum_{j=1}^N\frac{\nu_ja_{kj}}{\tilde{\lambda}_j} \psi_k(y)  \right\vert\\
&\leq L_0\vert x-y\vert^\beta+ \left\vert \delta\sum_{j=1}^N\frac{\nu_ja_{lj}}{\tilde{\lambda}_j} \psi_l(x) - \delta\sum_{j=1}^N\frac{\nu_ja_{lj}}{\tilde{\lambda}_j} \psi_l(y)  \right\vert\\
&\hspace{4cm}+\left\vert \delta\sum_{j=1}^N\frac{\nu_ja_{kj}}{\tilde{\lambda}_j} \psi_k(x) - \delta\sum_{j=1}^N\frac{\nu_ja_{kj}}{\tilde{\lambda}_j} \psi_k(y)  \right\vert\\
&\leq L_0\vert x-y\vert^\beta+\frac{\delta}{\sqrt{h}} \left\vert \sum_{j=1}^N\frac{\nu_ja_{lj}}{\tilde{\lambda}_j}  \right\vert\left\vert \psi\left(\frac{x-x_l}{h}\right)-\psi\left(\frac{y-x_l}{h}\right)\right\vert\\
&\hspace{4cm}+\frac{\delta}{\sqrt{h}} \left\vert \sum_{j=1}^N\frac{\nu_ja_{kj}}{\tilde{\lambda}_j}  \right\vert\left\vert \psi\left(\frac{x-x_k}{h}\right)-\psi\left(\frac{y-x_k}{h}\right)\right\vert\\
&\leq L_0\vert x-y\vert^\beta + \frac{\delta}{h^{\beta+1/2}} \left\vert \sum_{j=1}^N\frac{\nu_ja_{lj}}{\tilde{\lambda}_j}\right\vert\cdot  L\vert x-y\vert^\beta+ \frac{\delta}{h^{\beta+1/2}} \left\vert \sum_{j=1}^N\frac{\nu_ja_{kj}}{\tilde{\lambda}_j}\right\vert\cdot  L\vert x-y\vert^\beta\\
&=\left(\frac{L_0}{L}+ \frac{\delta}{h^{\beta+1/2}} \left\vert \sum_{j=1}^N\frac{\nu_ja_{lj}}{\tilde{\lambda}_j}\right\vert+\frac{\delta}{h^{\beta+1/2}} \left\vert \sum_{j=1}^N\frac{\nu_ja_{kj}}{\tilde{\lambda}_j}\right\vert\right)L\vert x-y\vert^\beta,
\end{align*}
where we have used $\psi\in H(\beta,L)$.
Observe that for all $k=1,\ldots,n$ and for all $\nu\in A_b$ it holds
$$
\frac{\delta}{h^{\beta+1/2}} \left\vert \sum_{j=1}^N\frac{\nu_ja_{kj}}{\tilde{\lambda}_j}\right\vert\leq  \frac{\delta}{h^{\beta+1}}\cdot \sqrt{\log\left( \frac{2N}{b} \right)}\leq \frac{1}{2}\left(1-\frac{L_0}{L} \right),
$$
since $\delta$ is assumed to satisfy $\delta\leq \frac{h}{\sqrt{\log(2N/b)}}\min\left\{ \frac{C_0(B)}{\Vert\psi\Vert_\infty} , \frac{1}{2}\left(1-\frac{L_0}{L} \right)h^\beta\right\}$.
Thus,  it holds $\vert f_\nu (x)-f_\nu(y)\vert\leq L\vert x-y\vert^\beta$ for all $\nu\in A_b$, $x\in B_l$ and $y\in B_k$.
The case $x\in B^c$ and $y\in B_k$ can be handled in a similar way, which ends the proof of $ii)$.

We now prove $iii)$.
It holds
\begin{align*}
\int_\Rb\vert f_\nu-f_0\vert &=\int_\Rb\left\vert \delta\sum_{j=1}^N\frac{\nu_j}{\tilde{\lambda_j}}v_j(x) \right\vert\dd x = \delta\sum_{k=1}^N\int_{B_k}\left\vert \sum_{j=1}^N\frac{\nu_j}{\tilde{\lambda_j}}v_j(x) \right\vert\dd x\\
&= \delta\sum_{k=1}^N\int_{B_k}\left\vert \sum_{j=1}^N\frac{\nu_ja_{kj}}{\tilde{\lambda_j}}\psi_k(x) \right\vert\dd x\\
&= \delta\sum_{k=1}^N\left\vert \sum_{j=1}^N\frac{\nu_ja_{kj}}{\tilde{\lambda_j}} \right\vert\int_{B_k}\left\vert \psi_k(x) \right\vert\dd x\\
&= C_1\delta \sqrt{h}\sum_{k=1}^N\left\vert \sum_{j=1}^N\frac{\nu_ja_{kj}}{\tilde{\lambda_j}} \right\vert,
\end{align*}
where $C_1=\int_{-1}^1\vert \psi\vert$.
For all $\nu\in A_b$ it thus holds
$$
\int_\Rb\vert f_\nu-f_0\vert\geq  C_1\frac{\delta h}{\sqrt{\log\left( \frac{2N}{b} \right)}}\sum_{k=1}^N\left\vert \sum_{j=1}^N\frac{\nu_ja_{kj}}{\tilde{\lambda_j}} \right\vert^2.
$$
Moreover,
\begin{align*}
\sum_{k=1}^N\left\vert \sum_{j=1}^N\frac{\nu_ja_{kj}}{\tilde{\lambda_j}} \right\vert^2&= \sum_{k=1}^N\left( \sum_{j=1}^N\left(\frac{\nu_ja_{kj}}{\tilde{\lambda_j}}\right)^2+\sum_{j\neq l}\frac{\nu_ja_{kj}}{\tilde{\lambda_j}}\frac{\nu_la_{kl}}{\tilde{\lambda_l}}  \right)\\
&=\sum_{j=1}^N\frac{1}{\tilde{\lambda}_j^2}\sum_{k=1}^Na_{kj}^2+\sum_{j\neq l}\frac{\nu_j\nu_l}{\tilde{\lambda}_j\tilde{\lambda}_l}\sum_{k=1}^Na_{kj}a_{kl}\\
&=\sum_{j=1}^N\frac{1}{\tilde{\lambda}_j^2},
\end{align*}
since the matrix $(a_{kj})_{k,j}$ is orthogonal.
Thus, for all $\nu\in A_b$ it holds 
$$
\Vert f_\nu-f_0\Vert_1\geq  C_1\frac{\delta h}{\sqrt{\log\left( \frac{2N}{b} \right)}}\sum_{j=1}^N\frac{1}{\tilde{\lambda}_j^2}.
$$
Set $\Jc=\{j\in\llbr 1,N\rrbr : z_\alpha^{-1}\lambda_j\geq \sqrt{2h} \}$, we have for all $\nu\in A_b$
\begin{align*}
\Vert f_\nu-f_0\Vert_1&\geq C_1\frac{\delta h}{\sqrt{\log\left( \frac{2N}{b} \right)}} \sum_{j=1}^N\left(\frac{1}{2h} I(z_\alpha^{-1}\lambda_j<\sqrt{2h})+\frac{z_\alpha^2}{\lambda_j^2}I(z_\alpha^{-1}\lambda_j\geq \sqrt{2h}) \right)\\
&=C_1\frac{\delta h}{\sqrt{\log\left( \frac{2N}{b} \right)}}\left(\frac{1}{2h} (N-\vert \Jc\vert) +\sum_{j\in\Jc}\frac{z_\alpha^2}{\lambda_j^2}\right) \\
&\geq C_1\frac{\delta h}{\sqrt{\log\left( \frac{2N}{b} \right)}} \left(\frac{N}{2h}-\frac{\vert \Jc\vert}{2h}+z_\alpha^2\vert \Jc\vert^2\left(\sum_{j\in\Jc}\lambda_j^2\right)^{-1}\right)  \\
&= C_1\frac{\delta N}{2\sqrt{\log\left( \frac{2N}{b} \right)}}\left( 1-\frac{\vert \Jc\vert}{N}+\left(\frac{\vert \Jc\vert}{N}\right)^2 \vert B\vert z_\alpha^2\left(\sum_{j\in\Jc}\lambda_j^2\right)^{-1} \right),
\end{align*}
where the second to last inequality follows from the inequality between harmonic and artithmetic means.
Now,
\begin{align*}
\sum_{j\in\Jc}\lambda_j^2\leq \sum_{j=1}^N\lambda_j^2&=\sum_{j=1}^N \langle Kv_j,v_j\rangle\\
&=\sum_{j=1}^N\left\langle \frac{1}{n}\sum_{i=1}^n\int_\Rb\left(\int_{\Zc_i} \frac{q_i(z_i\mid y)q_i(z_i\mid \cdot)\1_B(y)\1_B(\cdot)}{g_{0,i}(z_i)}\dd \mu_i(z_i) \right)v_j(y)\dd y , v_j \right\rangle \\
&=\frac{1}{n}\sum_{i=1}^n\int_{\Zc_i}\sum_{j=1}^N\left(\int_\Rb \int_\Rb\frac{q_i(z_i\mid y)q_i(z_i\mid x)\1_B(y)\1_B(x)}{g_{0,i}(z_i)}v_j(x)v_j(y)\dd x\dd y\right)\dd \mu_i(z_i) \\
&=\frac{1}{n}\sum_{i=1}^n\int_{\Zc_i}\sum_{j=1}^N\left(\int_\Rb \frac{q_i(z_i\mid x)\1_B(x)}{g_{0,i}(z_i)}v_j(x)\dd x\right)^2g_{0,i}(z_i)\dd \mu_i(z_i) \\
&=\frac{1}{n}\sum_{i=1}^n\int_{\Zc_i}\sum_{j=1}^N\left(\int_\Rb \left(\frac{q_i(z_i\mid x)}{g_{0,i}(z_i)}-e^{-2\alpha}\right)\1_B(x)v_j(x)\dd x\right)^2g_{0,i}(z_i)\dd \mu_i(z_i),
\end{align*}
since $\int \1_B(x) v_j(x)dx=0$.
Recall that  $q_i$ satisfies $e^{-\alpha }\leq q_i(z_i\mid x)\leq e^\alpha$ for all $z_i\in\Zc_i$ and all $x\in \Rb$.
This implies $e^{-\alpha}\leq g_{0,i}(z_i)\leq e^\alpha$, and therefore $0\leq f_{i,z_i}(x):=\frac{q_i(z_i\mid x)}{g_{0,i}(z_i)}-e^{-2\alpha}\leq z_\alpha$.
Writing $f_{i,z_i,B}=\1_B\cdot f_{i,z_i}$, we have
\begin{align*}
\sum_{j=1}^N\left(\int_\Rb \left(\frac{q_i(z_i\mid x)}{g_{0,i}(z_i)}-e^{-2\alpha}\right)\1_B(x) v_j(x)\dd x\right)^2&=\sum_{j=1}^N \langle f_{i,z_i,B}, v_j\rangle^2=\left\Vert \sum_{j=1}^N \langle f_{i,z_i,B}, v_j\rangle v_j \right\Vert_2^2  \\
&=\left\Vert \text{Proj}_{\text{Vect}(v_1,\ldots,v_N)}( f_{i,z_i,B}) \right\Vert_2^2 \\
&\leq \left\Vert  f_{i,z_i,B}\right\Vert_2^2\leq z_\alpha^2\vert B\vert.
\end{align*}
Moreover, $\int_{\Zc_i} g_{0,i}(z_i)\dd \mu_i(z_i)=\int_\Rb(\int_{\Zc_i}q_i(z_i\mid x)\dd \mu_i(z_i))f_0(x)\dd x=\int_\Rb f_0=1$.
This gives $\sum_{j\in\Jc}\lambda_j^2\leq z_\alpha^2 \vert B\vert$ and for all $\nu\in A_b$
$$
\Vert f_\nu-f_0\Vert_1\geq  C_1\frac{\delta N}{2\sqrt{\log\left( \frac{2N}{b} \right)}}\left( 1-\frac{\vert \Jc\vert}{N}+\left(\frac{\vert \Jc\vert}{N}\right)^2 \right)\geq \frac{3C_1}{8}\frac{\delta  N }{\sqrt{\log\left( \frac{2N}{b} \right)}}.
$$


\section{Proofs of Section \ref{Section Intercative scenario}}

\subsection{Proof of Proposition \ref{Prop Privacy Interactive mechanism}}\label{App Proof privacy interactive mechanism}

Let $i\in\llbr 1,n\rrbr $.
Since $Z_i$ depends only on $X_i$, condition \eqref{eq alphaLDPconstraint} reduces to 
\begin{equation}\label{Eq First part proof privacy Intercative mechanism}
 \frac{q^{Z_i\mid X_i=y}(z)}{q^{Z_i\mid X_i=y'}(z)}\leq e^\alpha, \quad \forall y,y'\in\Rb, \, \forall z\in\Rb^N,   
\end{equation}
where $q^{Z_i\mid X_i=y}$ denotes the conditional density of $Z_i$ given $X_i=y$.
It holds
$$
q^{Z_i\mid X_i=y}(z)=\prod_{j=1}^N \frac{\alpha}{4}\exp\left(-\frac{\alpha \vert z_j-I(y\in B_j)\vert}{2}  \right).
$$
Thus, by the reverse and the ordinary triangle inequality,
\begin{align*}
\frac{q^{Z_i\mid X_i=y}(z)}{q^{Z_i\mid X_i=y'}(z)}&=\prod_{j=1}^N \exp\left(\frac{\alpha\left[\vert z_j-I(y'\in B_j)\vert-\vert z_j-I(y\in B_j)\vert\right]}{2}  \right)\\
&\leq \prod_{j=1}^N \exp\left( \frac{\alpha\vert I(y\in B_j)-I(y'\in B_j)\vert}{2}  \right)\\
&= \exp\left(\frac{\alpha}{2}\sum_{j=1}^{N}\vert I(y\in B_j)-I(y'\in B_j)\vert \right)\\
&\leq \exp(\alpha),
\end{align*}
which proves \eqref{Eq First part proof privacy Intercative mechanism}.\\
Consider now $i\in \llbr n+1,2n\rrbr $. Since $Z_i$ depends only on $X_i$ and on $Z_1,\ldots, Z_n$, condition \eqref{eq alphaLDPconstraint} reduces for $i\in\llbr n+1, 2n\rrbr $ to 
\begin{equation}\label{Eq Second part proof privacy Intercative mechanism}
   \frac{\Pb\left(Z_i=z  \mid X_i\in A , Z_1=z_1,\ldots, Z_n=z_n \right)}{\Pb\left(Z_i=z \mid X_i\in F , Z_1=z_1,\ldots, Z_n=z_n \right)}\in [e^{-\alpha},e^\alpha] 
\end{equation}
for all $z\in\{-c_\alpha\tau,c_\alpha \tau\}$, $A,F\in \{\overline{B},B_1,\ldots,B_N \}$ and $z_1,\ldots,z_n\in \Rb^N$.
For all $j,k\in\llbr 1,N\rrbr$, for all $z_1,\ldots,z_n$ it holds
$$
\frac{\Pb\left(Z_i=c_\alpha\tau  \mid X_i\in B_j , Z_1=z_1,\ldots, Z_n=z_n \right)}{\Pb\left(Z_i=c_\alpha \tau \mid X_i\in B_k , Z_1=z_1,\ldots, Z_n=z_n \right)}=\frac{1+\frac{[\hat{p}_j-p_0(j)]_{-\tau}^\tau}{c_\alpha \tau}}{1+\frac{[\hat{p}_k-p_0(k)]_{-\tau}^\tau}{c_\alpha \tau}}\in \left[\frac{c_\alpha-1}{c_\alpha +1}, \frac{c_\alpha+1}{c_\alpha -1} \right]=[e^{-\alpha},e^{\alpha}],
$$
and a similar result holds for $z=-c_\alpha \tau$.
For all $j\in\llbr 1,N\rrbr$, for all $z_1,\ldots,z_n$ it holds
$$
\frac{\Pb\left(Z_i=c_\alpha\tau  \mid X_i\in B_j , Z_1=z_1,\ldots, Z_n=z_n \right)}{\Pb\left(Z_i=c_\alpha \tau \mid X_i\in \overline{B} , Z_1=z_1,\ldots, Z_n=z_n \right)}=1+\frac{[\hat{p}_j-p_0(j)]_{-\tau}^\tau}{c_\alpha \tau}\in \left[1-\frac{1}{c_\alpha},1+\frac{1}{c_\alpha} \right]\subset[e^{-\alpha},e^{\alpha}],
$$
and a similar result holds for $z=-c_\alpha \tau$.
This ends the proof of \eqref{Eq Second part proof privacy Intercative mechanism}.\\
Consider now $i\in\llbr 2n+1, 3n\rrbr$. Since $Z_i$ depends only on $X_i$, condition \eqref{eq alphaLDPconstraint} reduces for $i\in\llbr 2n+1, 3n\rrbr $ to 
\begin{equation*}
   \frac{\Pb\left(Z_i=z  \mid X_i\in A \right)}{\Pb\left(Z_i=z \mid X_i\in F \right)}\in [e^{-\alpha},e^\alpha] , \quad \forall A,F\in \{\overline{B},B_1,\ldots,B_N\}, \, \forall z\in\{-c_\alpha,c_\alpha\}.
\end{equation*}
We have already proved this in the proof of Proposition \ref{Prop Privacy of the NI Mechanism}.

\subsection{Analysis of the mean and variance of the statistic $D_B$} \label{App Analysis D_B}

\begin{proof}[Proof of Proposition~\ref{Prop Test Stat Int}]
1. For all $i\in\llbr n+1,2n\rrbr$ it holds
\begin{align*}
&\Pb\left(Z_i=\pm c_\alpha\tau \mid Z_1,\ldots, Z_n \right)\\
&\hspace{1cm}=\sum_{j=1}^N\Pb\left(Z_i= \pm c_\alpha\tau \mid X_i\in B_j \right)\Pb(X_i\in B_j)+\Pb\left(Z_i= \pm c_\alpha\tau \mid X_i\in \bar{B} \right)\Pb(X_i\in \bar{B})\\
&\hspace{1cm}=\sum_{j=1}^N\frac{1}{2}\left( 1 \pm \frac{[\widehat{p}_j-p_0(j)]_{-\tau}^\tau}{c_\alpha\tau} \right)p(j)+\frac{1}{2}\int_{\bar{B}}f.
\end{align*}
For $i\in\llbr n+1,2n\rrbr$ we thus have 
\begin{align*}
  \Eb[Z_i\mid Z_1,\ldots,Z_n]&=c_\alpha\tau \Pb(Z_i=c_\alpha\tau\mid Z_1,\ldots,Z_n)-c_\alpha\tau \Pb(Z_i=-c_\alpha\tau\mid Z_1,\ldots,Z_n) \\
  & =\sum_{j=1}^Np(j)[\widehat{p}_j-p_0(j)]_{-\tau}^\tau.
\end{align*}
Thus,
$$
\Eb[D_B]=\Eb\left[ \Eb[D_B\mid Z_1,\ldots, Z_n] \right]=\sum_{j=1}^N\{p(j)-p_0(j)\}\Eb\left[[\widehat{p}_j-p_0(j)]_{-\tau}^\tau\right].
$$
The proof of \eqref{Eq Lower bound on the Mean of the Interactive test statistic} is similar to the proof of Theorem 3 in \cite{Berrett_Butucea_2020_DiscreteDistribTesting}.

2. Write
$$
\Var(D_B)=\Eb\left[\Var\left(D_B \mid Z_1,\ldots,Z_n \right) \right] + \Var\left(\Eb\left[D_B \mid Z_1,\ldots, Z_n \right] \right).
$$
It holds
$$
\Eb\left[D_B \mid Z_1,\ldots, Z_n \right]=\sum_{j=1}^N\{p(j)-p_0(j)\}[\widehat{p}_j-p_0(j)]_{-\tau}^\tau,
$$
and
\begin{align*}
   \Var\left(D_B \mid Z_1,\ldots,Z_n \right)&=\Var\left(\frac{1}{n}\sum_{i=n+1}^{2n}Z_i-\sum_{j=1}^Np_0(j)[\widehat{p}_j-p_0(j)]_{-\tau}^\tau \mid Z_1,\ldots,Z_n \right) \\
   &=\Var\left(\frac{1}{n}\sum_{i=n+1}^{2n}Z_i \mid Z_1,\ldots,Z_n \right)\\
   &=\frac{1}{n^2}\sum_{i=n+1}^{2n}\Var\left(Z_i \mid Z_1,\ldots,Z_n \right)\\
   &\leq \frac{1}{n^2}\sum_{i=n+1}^{2n}\Eb\left[Z_i^2 \mid Z_1,\ldots,Z_n \right]\\
   &\leq \frac{c_\alpha^2\tau^2}{n},
\end{align*}
where we have used the independence of the random variables $(Z_i)_{i=n+1,\ldots,2n}$ conditionnally on $Z_1,\ldots,Z_n$.
This gives
\begin{align*}
    \Var(D_B)&\leq \frac{c_\alpha^2\tau^2}{n}+ \sum_{j=1}^N\{p(j)-p_0(j)\}^2\Var\left([\widehat{p}_j-p_0(j)]_{-\tau}^\tau\right)\\
    &+\sum_{j_1\neq j_2}\{p(j_1)-p_0(j_1)\}\{p(j_2)-p_0(j_2)\}\Cov([\widehat{p}_{j_1}-p_0(j_1)]_{-\tau}^\tau,[\widehat{p}_{j_2}-p_0(j_2)]_{-\tau}^\tau).
\end{align*}
Set $P_j=[\widehat{p}_j-p_0(j)]_{-\tau}^\tau$.
We will prove that 
\begin{equation}\label{Proof Var of D_B preliminary result 1}
    \Var(P_j)\leq \frac{10}{n\alpha^2}\exp\left(-\frac{n\alpha^2(p(j)-p_0(j))^2}{168} \right), \quad \forall j\in\llbr 1,N\rrbr, 
\end{equation}
and
\begin{equation}\label{Proof Var of D_B preliminary result 2}
    \left\vert \Cov(P_{j_1},P_{j_2})\right\vert \leq \frac{2p(j_1)p(j_2)}{n}\exp\left(-\frac{n\alpha^2\left[(p(j_1)-p_0(j_1))^2+(p(j_2)-p_0(j_2))^2\right]}{336} \right)
\end{equation}
for all $j_1,j_2\in \llbr 1, N\rrbr$, $j_1\neq j_2$.
We admit these results for the moment and finish the proof of Proposition~\ref{Prop Test Stat Int}.
Using \eqref{Proof Var of D_B preliminary result 1} and \eqref{Proof Var of D_B preliminary result 2} we obtain
\begin{align*}
    \Var(D_B)&\leq \frac{c_\alpha^2\tau^2}{n}+\frac{10}{n\alpha^2}\sum_{j=1}^N\{p(j)-p_0(j)\}^2\exp\left(-\frac{n\alpha^2(p(j)-p_0(j))^2}{168} \right)\\
    &\hspace{1.5cm}+\frac{2}{n}\left[\sum_{j=1}^N\vert p(j)-p_0(j)\vert p(j)\exp\left(-\frac{n\alpha^2(p(j)-p_0(j))^2}{336} \right)\right]^2\\
    &\leq \frac{c_\alpha^2\tau^2}{n}+\frac{10}{n\alpha^2}\sum_{j=1}^N\{p(j)-p_0(j)\}^2\exp\left(-\frac{n\alpha^2(p(j)-p_0(j))^2}{168} \right)\\
    &\hspace{1.5cm}+\frac{2}{n}\left[\sum_{j=1}^Np(j)^2 \right]\left[\sum_{j=1}^N\vert p(j)-p_0(j)\vert^2 \exp\left(-\frac{n\alpha^2(p(j)-p_0(j))^2}{168} \right)\right]\\
    &\leq \frac{c_\alpha^2\tau^2}{n}+\frac{12}{n\alpha^2}\sum_{j=1}^N\vert p(j)-p_0(j)\vert^2\exp\left(-\frac{n\alpha^2(p(j)-p_0(j))^2}{168} \right),
\end{align*}
where the second to last inequality follows from Cauchy Schwarz inequality.
Now, observe that if $a_j:=\vert p(j)-p_0(j) \vert \neq 0$, then we can write 
$$
\vert p(j)-p_0(j)\vert\exp\left(-\frac{n\alpha^2(p(j)-p_0(j))^2}{168} \right)=\min\{\tau, a_j\}\cdot \frac{a_j/\tau}{\min\{1, a_j/\tau\}}\exp\left( -\frac{1}{168}\left(\frac{a_j}{\tau} \right)^2\right),
$$
where we recall that $\tau=1/\sqrt{n\alpha^2}$.
The study of the function $g:x\mapsto [x/\min\{ 1,x\}]\exp(-x^2/168)$ gives $g(x)\leq \sqrt{84}e^{-1/2}$ for all $x\geq 0$.
We thus have 
$$
\Var(D_B)\leq \frac{c_\alpha^2\tau^2}{n}+\frac{12e^{-1/2}\sqrt{84}}{n\alpha^2}\sum_{j=1}^N\vert p(j)-p_0(j)\vert\min\left\{\tau, \vert p(j)-p_0(j)\vert \right\}.
$$
Using that $\alpha^2c_\alpha^2\leq 5$ for all $\alpha\in(0,1)$, we finally obtain the claim of Proposition~\ref{Prop Test Stat Int},
$$
\Var(D_B)\leq \frac{5}{(n\alpha^2)^2}+\frac{67}{n\alpha^2}D_\tau(f).
$$
It remains now to prove \eqref{Proof Var of D_B preliminary result 1} and \eqref{Proof Var of D_B preliminary result 2}.
We will use the following concentration inequality which is an application of Bernstein's inequality (see for instance Corollary 2.11 in \cite{boucheron2013concentration})
\begin{equation}\label{Eq Concentration inequality proof variance of the stat in the interactive scenario}
\Pb\left(\vert \widehat{p}_j-p(j)\vert \geq x\right)\leq 2\exp\left( -\frac{n\alpha^2x^2}{42}\right), \quad \text{for all } 0<x\leq \frac{1}{\alpha}.
\end{equation}
Let us prove \eqref{Proof Var of D_B preliminary result 1}.
Let $j\in\llbr 1,N\rrbr$.
We first deal with the case where $p(j)-p_0(j)\geq 2\tau $. 
We have
\begin{align*}
    \Var\left([\widehat{p}_j-p_0(j)]_{-\tau}^{\tau} \right)&= \Var\left([\widehat{p}_j-p_0(j)]_{-\tau}^{\tau}-\tau \right)\\
    &\leq \Eb\left[\left( [\widehat{p}_j-p_0(j)]_{-\tau}^{\tau}-\tau\right)^2 \right]\\
    &=\Eb\left[(-2\tau)^2\1\left(\widehat{p}_j-p_0(j)\leq -\tau \right) + (\widehat{p}_j-p_0(j)-\tau)^2\1\left(\widehat{p}_j-p_0(j)\in[-\tau,\tau] \right)\right]\\
    &\leq 4\tau^2 \Pb\left( \widehat{p}_j-p_0(j)\leq \tau \right)\\
    &=4\tau^2 \Pb\left(p(j)-\widehat{p}_j\geq p(j)-p_0(j)-\tau \right) \\
    &\leq 4\tau^2 \Pb\left(\vert p(j)-\widehat{p}_j\vert \geq p(j)-p_0(j)-\tau \right)
\end{align*}
Now, if $p(j)-p_0(j)\geq 2\tau $ then we have $0<p(j)-p_0(j)-\tau\leq p(j)\leq 1\leq 1/\alpha$ and \eqref{Eq Concentration inequality proof variance of the stat in the interactive scenario} gives
\begin{align*}
    \Var\left([\widehat{p}_j-p_0(j)]_{-\tau}^{\tau} \right)& \leq  8\tau^2\exp\left(-\frac{n\alpha^2\left\{p(j)-p_0(j)-\tau \right\}^2}{42} \right)\\
    &\leq \frac{8}{n\alpha^2}\exp\left(-\frac{n\alpha^2\left\{p(j)-p_0(j)\right\}^2}{168} \right),
\end{align*}
which ends the proof of \eqref{Proof Var of D_B preliminary result 1} for the elements $j\in\llbr 1,N\rrbr$ such that $p(j)-p_0(j)\geq 2\tau$.
Starting from $\Var\left([\widehat{p}_j-p_0(j)]_{-\tau}^{\tau} \right)= \Var\left([\widehat{p}_j-p_0(j)]_{-\tau}^{\tau}+\tau \right)$, a similar proof gives \eqref{Proof Var of D_B preliminary result 1} for the elements $j\in\llbr 1,N\rrbr$ such that $p(j)-p_0(j)\leq -2\tau$.
It remains to deal with the case $\vert p(j)-p_0(j) \vert <2\tau$.
In this case, using that $[\cdot]_{-\tau}^{\tau}$ is Lipschitz continuous with Lipschitz constant $1$ we have
\begin{align*}
  \Var\left([\widehat{p}_j-p_0(j)]_{-\tau}^{\tau} \right)&=  \Var\left([\widehat{p}_j-p_0(j)]_{-\tau}^{\tau}- [p_j-p_0(j)]_{-\tau}^{\tau}\right)\\
  &\leq \Eb\left[ \left( [\widehat{p}_j-p_0(j)]_{-\tau}^{\tau}- [p_j-p_0(j)]_{-\tau}^{\tau} \right)^2 \right]\\
  &\leq \Eb\left[ \vert \widehat{p}_j-p(j)\vert^2 \right]\\
  &=\Var(\widehat{p}_j)\\
  &=\frac{1}{n^2}\sum_{i=1}^n\Var\left(I(X_i\in B_j)\right)+\frac{4}{n^2\alpha^2}\sum_{i=1}^n\Var(W_{ij})\\
  &\leq \frac{9}{n\alpha^2}\\
  &=\frac{9}{n\alpha^2}\exp\left(\frac{n\alpha^2\left\{p(j)-p_0(j)\right\}^2}{168}\right)\exp\left(-\frac{n\alpha^2\left\{p(j)-p_0(j)\right\}^2}{168}\right)\\
  &\leq \frac{9\exp(1/42)}{n\alpha^2}\exp\left(-\frac{n\alpha^2\left\{p(j)-p_0(j)\right\}^2}{168}\right),
\end{align*}
where the last inequality follows from the assumption $\vert p(j)-p_0(j)\vert \leq 2\tau=2/\sqrt{n\alpha^2}$.
This ends the proof of \eqref{Proof Var of D_B preliminary result 1}.
We now prove \eqref{Proof Var of D_B preliminary result 2}.
For all $i\in \llbr 1,n+1\rrbr$, we will write
\begin{align*}
    &\Eb_i\left[\cdot \right]=\Eb\left[\cdot \mid X_1,\ldots,X_{i-1} \right],\\
    &\Eb_i^j\left[\cdot \right]=\frac{1}{p(j)}\Eb\left[\cdot \1(X_i\in B_j) \mid X_1,\ldots,X_{i-1} \right],\\
    &\Eb_i^{comp}\left[\cdot \right]=\frac{1}{p\left(\overline{B}\right)}\Eb\left[\cdot \1(X_i\in\overline{B}) \mid X_1,\ldots,X_{i-1} \right].
\end{align*}
Observe that 
\begin{equation}\label{Proof Var Inter eq a.s.1}
    \Eb_i^j\left[P_{j_1} \right] \overset{a.s.}{=} \Eb_i^{j_2}\left[P_{j_1} \right], \quad \forall j, j_2\neq j_1,
\end{equation}
and
\begin{equation}\label{Proof Var Inter eq a.s.2}
    \Eb_i^{comp}\left[P_{j_1} \right] \overset{a.s.}{=} \Eb_i^{j_2}\left[P_{j_1} \right], \quad \forall  j_2\neq j_1,
\end{equation}
where we recall that $P_j=[\widehat{p}_j-p_0(j)]_{-\tau}^\tau$.
Let $j_1,j_2\in\llbr 1,N\rrbr$, $j_1\neq j_2$.
We have
\begin{align*}
    \Cov\left(P_{j_1},P_{j_2} \right)&=\Cov\left(\Eb_{n+1}\left[P_{j_1}\right],\Eb_{n+1}\left[P_{j_2}\right] \right)\\
    &=\Eb\left[\Eb_{n+1}\left[P_{j_1}\right] \Eb_{n+1}\left[P_{j_2}\right]\right]-\Eb\left[P_{j_1} \right]\Eb\left[P_{j_2} \right]\\
    &=\Eb\left[\sum_{i=1}^n\left(\Eb_{i+1}\left[P_{j_1}\right] \Eb_{i+1}\left[P_{j_2}\right]-\Eb_{i}\left[P_{j_1}\right] \Eb_{i}\left[P_{j_2}\right] \right) \right],
\end{align*}
where the sum in the last line is a telescoping sum.
We thus have 
\begin{equation}\label{Proof var Inter eq Cov 1}
  \Cov\left(P_{j_1},P_{j_2} \right)=\sum_{i=1}^n \Eb\left[ \Eb_{i+1}\left[P_{j_1}\right] \Eb_{i+1}\left[P_{j_2}\right]-\Eb_{i}\left[P_{j_1}\right] \Eb_{i}\left[P_{j_2}\right] \right] . 
\end{equation}
Now, it holds
\begin{align*}
   \Eb_{i}\left[P_{j_1}\right]&=\Eb\left[P_{j_1}\mid X_1,\ldots,X_{i-1} \right]\\ 
   &=\Eb\left[P_{j_1}\cdot\left(\sum_{j=1}^N \1(X_i\in B_j)+\1(X_i\in\overline{B}) \right)\mid X_1,\ldots,X_{i-1} \right]\\
   &=\sum_{j=1}^N p(j)\Eb_i^j\left[ P_{j_1}\right]+p\left(\overline{B} \right)\Eb_i^{comp}\left[ P_{j_1}\right]\\
   &=p(j_1)\Eb_i^{j_1}\left[ P_{j_1}\right]+\displaystyle\sum_{\substack{j=1\ \\j\neq j_1}}^N p(j)\Eb_i^{j_2}\left[ P_{j_1}\right]+p\left(\overline{B} \right)\Eb_i^{j_2}\left[ P_{j_1}\right],
\end{align*}
where the last equality follows from \eqref{Proof Var Inter eq a.s.1} and \eqref{Proof Var Inter eq a.s.2}.
We thus obtain
\begin{equation}\label{Proof var Inter eq Cov 2}
    \Eb_{i}\left[P_{j_1}\right]=p(j_1)\Eb_i^{j_1}\left[ P_{j_1}\right]+(1-p(j_1))\Eb_i^{j_2}\left[ P_{j_1}\right].
\end{equation}
Similarly, it holds
\begin{equation}\label{Proof var Inter eq Cov 3}
    \Eb_{i}\left[P_{j_2}\right]=p(j_2)\Eb_i^{j_2}\left[ P_{j_2}\right]+(1-p(j_2))\Eb_i^{j_1}\left[ P_{j_2}\right].
\end{equation}
We now compute $\Eb_{X_i}\left[\Eb_{i+1}\left[P_{j_1}\right] \Eb_{i+1}\left[P_{j_2}\right] \right]$.
We have
\begin{align*}
    &\Eb_{X_i}\left[\Eb_{i+1}\left[P_{j_1}\right] \Eb_{i+1}\left[P_{j_2}\right] \right]\\
    &=\int_{\Rb}f(y_i)\left[\int_{\Rb^{n-i}}P_{j_1}(X_1,\ldots,X_{i-1},y_i,y_{i+1},\ldots y_n)f(y_{i+1})\cdots f(y_n) \dd y_{i+1}\cdots \dd y_{n} \right.\\
    &\hspace{2cm}\left.\cdot\int_{\Rb^{n-i}}P_{j_2}(X_1,\ldots,X_{i-1},y_i,y'_{i+1},\ldots y'_n)f(y'_{i+1})\cdots f(y'_n) \dd y'_{i+1}\cdots \dd y'_{n} \right]\dd y_i\\
    &=\sum_{j=1}^N\int_{\Rb}f(y_i)\1(y_i\in B_j)\left[\int_{\Rb^{n-i}}P_{j_1}(X_1,\ldots,X_{i-1},y_i,y_{i+1},\ldots y_n)f(y_{i+1})\cdots f(y_n) \dd y_{i+1}\cdots \dd y_{n} \right.\\
    &\hspace{3.5cm}\left.\cdot\int_{\Rb^{n-i}}P_{j_2}(X_1,\ldots,X_{i-1},y_i,y'_{i+1},\ldots y'_n)f(y'_{i+1})\cdots f(y'_n) \dd y'_{i+1}\cdots \dd y'_{n} \right]\dd y_i\\
    &\hspace{1cm}+\int_{\Rb}f(y_i)\1(y_i\in \overline{B})\left[\int_{\Rb^{n-i}}P_{j_1}(X_1,\ldots,X_{i-1},y_i,y_{i+1},\ldots y_n)f(y_{i+1})\cdots f(y_n) \dd y_{i+1}\cdots \dd y_{n} \right.\\
    &\hspace{3.5cm}\left.\cdot\int_{\Rb^{n-i}}P_{j_2}(X_1,\ldots,X_{i-1},y_i,y'_{i+1},\ldots y'_n)f(y'_{i+1})\cdots f(y'_n) \dd y'_{i+1}\cdots \dd y'_{n} \right]\dd y_i
\end{align*}
For $j=1,\ldots,N$, let $x_j$ be such that $B_j=[x_j-h, x_j+h]$. 
Observe that if $y_i\in \mathring{B}_j $ then it holds $\1(y_i\in B_k)=\delta_{j,k}=\1(x_j\in B_k)$ where $\delta$ is the Kronecker delta.
Observe also that if $y_i\in \overline{B}$ then it holds $\1(y_i\in B_k)=0=\1(z\in B_k)$ for some $z\in\overline{B}$.
This gives
\begin{equation}\label{Proof var Inter eq Cov 4}
    P_k\left(X_1,\ldots,X_{i-1},y_i,y_{i+1},\ldots,y_n \right)\1(y_i\in\mathring{B}_j)=P_k\left(X_1,\ldots,X_{i-1},x_j,y_{i+1},\ldots,y_n \right)\1(y_i\in\mathring{B}_j),
\end{equation}
and
\begin{equation}\label{Proof var Inter eq Cov 5}
    P_k\left(X_1,\ldots,X_{i-1},y_i,y_{i+1},\ldots,y_n \right)\1(y_i\in\overline{B})=P_k\left(X_1,\ldots,X_{i-1},z,y_{i+1},\ldots,y_n \right)\1(y_i\in\overline{B}).
\end{equation}
We thus have
\begin{align*}
    &\Eb_{X_i}\left[\Eb_{i+1}\left[P_{j_1}\right] \Eb_{i+1}\left[P_{j_2}\right] \right]\\
    &=\sum_{j=1}^Np(j)\left[\int_{\Rb^{n-i}}P_{j_1}(X_1,\ldots,X_{i-1},x_j,y_{i+1},\ldots y_n)f(y_{i+1})\cdots f(y_n) \dd y_{i+1}\cdots \dd y_{n} \right.\\
    &\hspace{3.5cm}\left.\cdot\int_{\Rb^{n-i}}P_{j_2}(X_1,\ldots,X_{i-1},x_j,y'_{i+1},\ldots y'_n)f(y'_{i+1})\cdots f(y'_n) \dd y'_{i+1}\cdots \dd y'_{n} \right]\\
    &\hspace{1cm}+p(\overline{B})\left[\int_{\Rb^{n-i}}P_{j_1}(X_1,\ldots,X_{i-1},z,y_{i+1},\ldots y_n)f(y_{i+1})\cdots f(y_n) \dd y_{i+1}\cdots \dd y_{n} \right.\\
    &\hspace{3.5cm}\left.\cdot\int_{\Rb^{n-i}}P_{j_2}(X_1,\ldots,X_{i-1},z,y'_{i+1},\ldots y'_n)f(y'_{i+1})\cdots f(y'_n) \dd y'_{i+1}\cdots \dd y'_{n} \right].
\end{align*}
Now, observe that 
\begin{equation}\label{Proof var Inter eq Cov 6}
   \int_{\Rb^{n-i}}P_{k}(X_1,\ldots,X_{i-1},x_j,y_{i+1},\ldots y_n)f(y_{i+1})\cdots f(y_n) \dd y_{i+1}\cdots \dd y_{n}=\Eb_i^j[P_k]. 
\end{equation}
Indeed, it holds
\begin{align*}
    \Eb_i^j[P_k]&= \frac{1}{p(j)}\Eb\left[P_k\1(X_i\in B_j)\mid X_1,\ldots,X_{i-1} \right]  \\
    &=\frac{1}{p(j)}\int_{\Rb^{n-i+1}}P_k(X_1,\ldots,X_{i-1},y_i,y_{i+1},\ldots,y_n)\1(y_i\in B_j)f(y_i)f(y_{i+1})\cdots f(y_{n})\dd y_i\dd y_{i+1}\dd y_n \\
    &=\int_{\Rb^{n-i}}P_k(X_1,\ldots,X_{i-1},x_j,y_{i+1},\ldots,y_n)f(y_{i+1})\cdots f(y_{n})\dd y_{i+1}\dd y_n,
\end{align*}
where the last equality follows from \eqref{Proof var Inter eq Cov 4}.
Similarly, using \eqref{Proof var Inter eq Cov 5} one can prove that for $z\in\overline{B}$ it holds
$$
\int_{\Rb^{n-i}}P_{k}(X_1,\ldots,X_{i-1},z,y_{i+1},\ldots y_n)f(y_{i+1})\cdots f(y_n) \dd y_{i+1}\cdots \dd y_{n} =\Eb_i^{comp}[P_k].
$$
We thus have
\begin{align*}
    &\Eb_{X_i}\left[\Eb_{i+1}\left[P_{j_1}\right] \Eb_{i+1}\left[P_{j_2}\right] \right]=\sum_{j=1}^Np(j)\Eb_i^j[P_{j_1}]\Eb_i^j[P_{j_2}] +p(\overline{B})\Eb_i^{comp}[P_{j_1}]\Eb_i^{comp}[P_{j_2}],
\end{align*}
and, using \eqref{Proof Var Inter eq a.s.1} and \eqref{Proof Var Inter eq a.s.2} we finally obtain
\begin{multline}\label{Proof var Inter eq Cov 7}
    \Eb_{X_i}\left[\Eb_{i+1}\left[P_{j_1}\right] \Eb_{i+1}\left[P_{j_2}\right] \right]=p(j_1)\Eb_i^{j_1}\left[ P_{j_1}\right]\Eb_i^{j_1}\left[ P_{j_2}\right]+p(j_2)\Eb_i^{j_2}\left[ P_{j_1}\right]\Eb_i^{j_2}\left[ P_{j_2}\right]\\
    +\left(1-p(j_1)-p(j_2) \right)\Eb_i^{j_2}\left[ P_{j_1}\right]\Eb_i^{j_1}\left[ P_{j_2}\right].
\end{multline}
Putting \eqref{Proof var Inter eq Cov 2}, \eqref{Proof var Inter eq Cov 3} and \eqref{Proof var Inter eq Cov 7} in \eqref{Proof var Inter eq Cov 1}, we obtain
\begin{align*}
    &\Cov\left(P_{j_1},P_{j_2} \right)\\
    &=\sum_{i=1}^n\Eb\left[p(j_1)\Eb_i^{j_1}\left[ P_{j_1}\right]\Eb_i^{j_1}\left[ P_{j_2}\right]+p(j_2)\Eb_i^{j_2}\left[ P_{j_1}\right]\Eb_i^{j_2}\left[ P_{j_2}\right]  +\left(1-p(j_1)-p(j_2) \right)\Eb_i^{j_2}\left[ P_{j_1}\right]\Eb_i^{j_1}\left[ P_{j_2}\right]\right.\\
    &\hspace{2cm}+\left. \left\{p(j_1)\Eb_i^{j_1}\left[ P_{j_1}\right]+(1-p(j_1))\Eb_i^{j_2}\left[ P_{j_1}\right] \right\}\left\{p(j_2)\Eb_i^{j_2}\left[ P_{j_2}\right]+(1-p(j_2))\Eb_i^{j_1}\left[ P_{j_2}\right] \right\} \right]\\
    &=\sum_{i=1}^np(j_1)p(j_2) \Eb\left[\left(\Eb_i^{j_1}\left[ P_{j_1}\right]-\Eb_i^{j_2}\left[ P_{j_1}\right] \right)\left(\Eb_i^{j_1}\left[ P_{j_2}\right]-\Eb_i^{j_2}\left[ P_{j_2}\right] \right) \right],
\end{align*}
and Cauchy-Schwarz inequality gives 
\begin{equation}\label{Proof var inter eq Cauchy Schwarz}
    \left\vert \Cov(P_{j_1},P_{j_2}) \right\vert \leq \sum_{i=1}^np(j_1)p(j_2)\sqrt{\Eb\left[\left(\Eb_i^{j_1}\left[ P_{j_1}\right]-\Eb_i^{j_2}\left[ P_{j_1}\right] \right)^2 \right]}\sqrt{\Eb\left[\left(\Eb_i^{j_1}\left[ P_{j_2}\right]-\Eb_i^{j_2}\left[ P_{j_2}\right] \right)^2 \right]}.
\end{equation}
Now, using \eqref{Proof var Inter eq Cov 6} and Jensen's inequality we have
\begin{align*}
   &\Eb\left[\left(\Eb_i^{j_1}\left[ P_{j_1}\right]-\Eb_i^{j_2}\left[ P_{j_1}\right] \right)^2 \right] \\
   & =\Eb\left[\left\{\int_{\Rb^{n-i}}\left( P_{j_1}(X_1,\ldots,X_{i-1},x_{j_1},y_{i+1},\ldots,y_n)-P_{j_1}(X_1,\ldots,X_{i-1},x_{j_2},y_{i+1},\ldots,y_n) \right)\right.\right.\\
   &\hspace{9cm}\left.\left. f(y_{i+1})\cdots f(y_n) \dd y_{i+1}\cdots \dd y_n \right\}^2\right]\\
   &\leq \Eb\left[\int_{\Rb^{n-i}}\left\{ P_{j_1}(X_1,\ldots,X_{i-1},x_{j_1},y_{i+1},\ldots,y_n)-P_{j_1}(X_1,\ldots,X_{i-1},x_{j_2},y_{i+1},\ldots,y_n)\right\}^2\right.\\
   &\hspace{9.4cm}\left. f(y_{i+1})\cdots f(y_n) \dd y_{i+1}\cdots \dd y_n \right]\\
   &=\Eb\left[\left\{P_{j_1}(X_1,\ldots,X_{i-1},x_{j_1},X_{i+1},\ldots,X_n)-P_{j_1}(X_1,\ldots,X_{i-1},x_{j_2},X_{i+1},\ldots,X_n) \right\}^2\right]\\
   &=\Eb\left[\left(\left[\frac{1}{n}+Y\right]_{-\tau}^\tau -\left[Y\right]_{-\tau}^\tau \right)^2 \right],
\end{align*}
where 
$$
Y=\frac{1}{n}\displaystyle\sum_{\substack{k=1 \\ k\neq i}}^n \1(X_k\in B_{j_1})+\frac{2}{n\alpha}\sum_{k=1}^nW_{k j_1}-p_0(j_1).
$$
Note that since $[\cdot]_{-\tau}^\tau$ is continuous Lipschitz with Lipschitz constant $1$, it holds
$$ 
\Eb\left[\left(\Eb_i^{j_1}\left[ P_{j_1}\right]-\Eb_i^{j_2}\left[ P_{j_1}\right] \right)^2 \right]\leq \frac{1}{n^2}.
$$
However, we can provide another bound when $\vert p(j_1)-p_0(j_1)\vert \geq 2(\tau+1/n)$.
Assume that $ p(j_1)-p_0(j_1) \geq 2(\tau+1/n)$.
We have
\begin{align*}
  &\Eb\left[\left(\Eb_i^{j_1}\left[ P_{j_1}\right]-\Eb_i^{j_2}\left[ P_{j_1}\right] \right)^2 \right]\\
  &\leq \Eb\left[\left(\left[\frac{1}{n}+Y\right]_{-\tau}^\tau -\left[Y\right]_{-\tau}^\tau \right)^2\1(Y\leq \tau) \right]+\Eb\left[\left(\left[\frac{1}{n}+Y\right]_{-\tau}^\tau -\left[Y\right]_{-\tau}^\tau \right)^2\1(Y> \tau) \right] \\
  &\leq \frac{1}{n^2}\Pb(Y\leq \tau)\\
  &=\frac{1}{n^2}\Pb\left(\frac{1}{n}\displaystyle\sum_{\substack{k=1 \\ k\neq i}}^n \1(X_k\in B_{j_1})+\frac{2}{n\alpha}\sum_{k=1}^nW_{k j_1}-p_0(j_1) \leq \tau \right)\\
  &\leq \frac{1}{n^2}\Pb\left(\frac{1}{n}\sum_{k=1}^n \1(X_k\in B_{j_1})-\frac{1}{n}+\frac{2}{n\alpha}\sum_{k=1}^nW_{k j_1}-p_0(j_1) \leq \tau \right)\\
  &=\frac{1}{n^2}\Pb\left(\widehat{p}_{j_1}\leq \tau+\frac{1}{n}+p_0(j_1) \right)\\
  &\leq \frac{1}{n^2}\Pb\left(\vert \widehat{p}_{j_1}-p(j_1)\vert \geq p(j_1)-p_0(j_1)-\tau-\frac{1}{n} \right)
\end{align*}
Now, if $ p(j_1)-p_0(j_1) \geq 2(\tau+1/n)$ then we have $0<p(j_1)-p_0(j_1)-\tau-\frac{1}{n} \leq p(j_1)\leq 1\leq \frac{1}{\alpha}$ and \eqref{Eq Concentration inequality proof variance of the stat in the interactive scenario} gives
\begin{align*}
    \Eb\left[\left(\Eb_i^{j_1}\left[ P_{j_1}\right]-\Eb_i^{j_2}\left[ P_{j_1}\right] \right)^2 \right]&\leq \frac{2}{n^2}\exp\left(-\frac{n\alpha^2\left(p(j_1)-p_0(j_1)-\tau-1/n \right)^2}{42} \right)\\
    &\leq \frac{2}{n^2}\exp\left(-\frac{n\alpha^2\left(p(j_1)-p_0(j_1)\right)^2}{168} \right).
\end{align*}
One can prove the same result if $ p(j_1)-p_0(j_1) \leq -2(\tau+1/n)$, and similar bounds with $j_1$ replaced by $j_2$ hold for $\Eb\left[\left(\Eb_i^{j_1}\left[ P_{j_2}\right]-\Eb_i^{j_2}\left[ P_{j_2}\right] \right)^2\right]$.
We can now conclude.\\
If $j_1\neq j_2$ are such that $\vert p(j_1)-p_0(j_1)\vert \geq 2(\tau+1/n)$ and $\vert p(j_2)-p_0(j_2)\vert \geq 2(\tau+1/n)$ then \eqref{Proof var inter eq Cauchy Schwarz} gives
$$
\left\vert \Cov\left(P_{j_1},P_{j_2} \right)\right\vert \leq \frac{2p(j_1)p(j_2)}{n}\exp\left(-\frac{n\alpha^2\left[(p(j_1)-p_0(j_1))^2+(p(j_2)-p_0(j_2))^2\right]}{336} \right).
$$
If $j_1\neq j_2$ are such that $\vert p(j_1)-p_0(j_1)\vert < 2(\tau+1/n)$ and $\vert p(j_2)-p_0(j_2)\vert \geq 2(\tau+1/n)$ then \eqref{Proof var inter eq Cauchy Schwarz} gives
\begin{align*}
    &\left\vert \Cov\left(P_{j_1},P_{j_2} \right)\right\vert\\
    &\leq \frac{\sqrt{2}p(j_1)p(j_2)}{n}\exp\left(-\frac{n\alpha^2(p(j_2)-p_0(j_2))^2}{336} \right)\\
    &=\frac{\sqrt{2}p(j_1)p(j_2)}{n}\exp\left(-\frac{n\alpha^2\left[(p(j_1)-p_0(j_1))^2+(p(j_2)-p_0(j_2))^2\right]}{336} \right)\exp\left(\frac{n\alpha^2(p(j_1)-p_0(j_1))^2}{336} \right)\\
    &\leq \frac{\sqrt{2}\exp(1/21)p(j_1)p(j_2)}{n}\exp\left(-\frac{n\alpha^2\left[(p(j_1)-p_0(j_1))^2+(p(j_2)-p_0(j_2))^2\right]}{336} \right),
\end{align*}
since $\vert p(j_1)-p_0(j_1)\vert < 2(\tau+1/n)\leq 4/\sqrt{n\alpha^2}$.
The same result holds if $j_1\neq j_2$ are such that $\vert p(j_1)-p_0(j_1)\vert \geq  2(\tau+1/n)$ and $\vert p(j_2)-p_0(j_2)\vert < 2(\tau+1/n)$.
Finally, if $j_1\neq j_2$ are such that $\vert p(j_1)-p_0(j_1)\vert <  2(\tau+1/n)$ and $\vert p(j_2)-p_0(j_2)\vert < 2(\tau+1/n)$, then \eqref{Proof var inter eq Cauchy Schwarz} gives
\begin{align*}
    \left\vert \Cov\left(P_{j_1},P_{j_2} \right)\right\vert &\leq \frac{p(j_1)p(j_2)}{n}\\
    &\leq \frac{p(j_1)p(j_2)}{n}\exp\left(\frac{2}{21}\right)\exp\left(-\frac{n\alpha^2\left[(p(j_1)-p_0(j_1))^2+(p(j_2)-p_0(j_2))^2\right]}{336} \right),
\end{align*}
which ends the proof of \eqref{Proof Var of D_B preliminary result 2}.
\end{proof}

\subsection{Proof of Theorem \ref{Thrm Interactive Upper bound}}\label{App Proof upper bound interactive scenario}

The outline of the proof is similar to that of Theorem \ref{Thrm Upper Bound NI} : we first prove that the choice of $t_1$ and $t_2$ in \eqref{Eq interactive t1 and t2} yields $\Pb_{Q_{f_0}^n}(\Phi=1)\leq \gamma/2$ and we then exhibit $\rho_1,\rho_2>0$ such that
$$
\begin{cases}
\int_{B}\vert f-f_0\vert\geq \rho_1 \Rightarrow \Pb_{Q_f^n}(D_B< t_1)\leq \gamma/2\\
\int_{\bar{B}}\vert f-f_0\vert\geq \rho_2 \Rightarrow \Pb_{Q_f^n}(T_B< t_2)\leq \gamma/2.
\end{cases}
$$
The quantity $\rho_1+\rho_2$ will then provide an upper bound on  $\Ec_{n,\alpha}(f_0,\gamma)$.

We have already seen in the proof of the upper bound in the non-interactive scenario that the choice $t_2=\sqrt{20/(n\alpha^2\gamma)}$ gives $\Pb_{Q_{f_0}^n}(T_B\geq t_2)\leq \gamma/4$.
Moreover, Chebychev's inequality and Proposition~\ref{Prop Test Stat Int} yield
\begin{align*}
    \Pb_{Q_{f_0}^n}( D_B\geq t_1)=\Pb_{Q_{f_0}^n}( D_B-\Eb_{Q_{f_0}^n}[D_B]\geq t_1)&\leq \Pb_{Q_{f_0}^n}\left(\vert D_B-\Eb_{Q_{f_0}^n}[D_B]\vert \geq t_1\right)\\
    &\leq \frac{\Var_{Qf_0^n}(D_B)}{t_1^2}\\
    &\leq \frac{5}{(n\alpha^2)^2t_1^2}
    \leq \frac{\gamma}{4}
\end{align*}
for $t_1= 2\sqrt{5}/(n\alpha^2\sqrt{\gamma})$.
We thus have 
$$
\Pb_{Q_{f_0}^n}(\Phi=1)\leq \Pb_{Q_{f_0}^n}( D_B\geq t_1)+  \Pb_{Q_{f_0}^n}(T_B\geq t_2)\leq \frac{\gamma}{2}.
$$
We have seen in the proof of Theorem \ref{Thrm Upper Bound NI} (upper bound in the non-interactive scenario) that if we set 
$$
\rho_2=2\int_{\bar{B}}f_0+\left( 1+\frac{1}{\sqrt{2}}\right)t_2,
$$
then we have 
$$
\int_{\bar{B}}\vert f-f_0\vert\geq \rho_2\implies\Pb_{Q_{f}^n}(T_B < t_2)\leq\frac{\gamma}{2}.
$$
It remains now to exhibit $\rho_1$ such that $\int_B\vert f-f_0\vert\geq \rho_1$ implies $\Pb_{Q_{f}^n}(D_B < t_1)\leq \gamma/2.$
Chebychev's inequality gives
\begin{align*}
\Pb_{Q_f^n}(D_B< t_1)
&=\Pb_{Q_f^n}\left(\Eb_{Q_f^n}[D_B]-D_B>\Eb_{Q_f^n}[D_B]- t_1\right)\\
&\leq \frac{\Var_{Qf^n}(D_B)}{\left( \Eb_{Q_f^n}[D_B]- t_1\right)^2}\\
&\leq \frac{\frac{5}{(n\alpha^2)^2}}{\left( \Eb_{Q_f^n}[D_B]- t_1\right)^2}+\frac{\frac{67D_\tau(f)}{n\alpha^2}}{\left( \Eb_{Q_f^n}[D_B]- t_1\right)^2},
\end{align*}
if $\Eb_{Q_f^n}[D_B]- t_1>0$.
Now, observe that if $D_\tau(f)\geq 12(t_1+6\tau/\sqrt{n})$, Proposition~\ref{Prop Test Stat Int} implies 
$$
\Eb_{Q_f^n}[D_B]-t_1\geq \frac{1}{6}D_\tau(f)-\frac{6\tau}{\sqrt{n}}-t_1\geq t_1+\frac{6\tau}{\sqrt{n}}\geq t_1,
$$
and 
$$
\Eb_{Q_f^n}[D_B]-t_1\geq \frac{1}{6}D_\tau(f)-\left(\frac{6\tau}{\sqrt{n}}+t_1\right)\geq \frac{1}{6}D_\tau(f)-\frac{1}{12}D_\tau(f)=\frac{1}{12}D_\tau(f).
$$
Thus, if $D_\tau(f)\geq 12(t_1+6\tau/\sqrt{n})$ we obtain 
$$
\Pb_{Q_f^n}(D_B< t_1)\leq \frac{5}{(n\alpha^2)^2t_1^2}+\frac{144\times 67}{n\alpha^2D_\tau(f)}
=\frac{\gamma}{4}+\frac{9648}{n\alpha^2D_\tau(f)}.
$$
Thus, if $D_\tau(f)$ satisfies
\begin{equation*}
D_\tau(f)\geq\frac{C_\gamma}{n\alpha^2}, \quad \text{with } C_\gamma=  \max\left\{ \frac{24\sqrt{5}+72}{\sqrt{\gamma}},\frac{9648\times 4}{\gamma} \right\}
\end{equation*}
then we have $\Pb_{Q_f^n}(D_B< t_1)\leq \gamma/2$.
We now exhibit $\rho_1$ such that $\int_B \vert f-f_0\vert \geq \rho_1$ implies $D_\tau(f)\geq C_\gamma/(n\alpha^2)$.
To this aim, we will use the following facts
\begin{enumerate}[label=\roman*)]
\item $D_\tau(f)\geq \min\left\{ \sum_{j=1}^N\vert p(j)-p_0(j)\vert^2, \tau\sqrt{\sum_{j=1}^N\vert p(j)-p_0(j)\vert^2} \right\}$,
\item  $\sum_{j=1}^N\vert p(j)-p_0(j)\vert^2\geq C_\gamma^2/(n\alpha^2)\Rightarrow \min\left\{ \sum_{j=1}^N\vert p(j)-p_0(j)\vert^2, \tau\sqrt{\sum_{j=1}^N\vert p(j)-p_0(j)\vert^2} \right\}\geq C_\gamma/(n\alpha^2)$,
\item $\left(\int_B\vert f-f_0\vert \right)^2\leq 4(L+L_0)^2\vert B\vert^2h^{2\beta}+\vert B\vert/(2h)\sum_{j=1}^N\vert p(j)-p_0(j)\vert^2$.
\end{enumerate}
We admit for now these three facts  and conclude the proof of our upper bound.
If we have
$$
\left(\int_B\vert f-f_0\vert \right)^2\geq 4(L+L_0)^2\vert B\vert^2h^{2\beta}+\frac{\vert B \vert}{2h}\frac{C_\gamma^2}{n\alpha^2}
$$
then iii) implies
$$
\sum_{j=1}^N\vert p(j)-p_0(j)\vert^2\geq \frac{C_\gamma^2}{n\alpha^2},
$$
and ii) combined with i) yield $D_\tau(f)\geq C_\gamma/(n\alpha^2)$ and thus $\Pb_{Q_f^n}(D_B< t_1)\leq \gamma/2$.
We can then take 
$$
\rho_1= \sqrt{4(L+L_0)^2\vert B\vert^2h^{2\beta}+\frac{\vert B \vert}{2h}\frac{C_\gamma^2}{n\alpha^2}}.
$$
For all $f\in H(\beta,L)$ satisfying $\Vert f-f_0\Vert_1\geq \rho_1+\rho_2$ it holds
$$
\Pb_{Q_{f_0}^n}(\Phi=1)+\Pb_{Q_f^n}(\Phi=0)\leq \frac{\gamma}{2}+ \min\left\{ \Pb_{Q_f^n}( D_B< t_1),  \Pb_{Q_f^n}(T_B< t_2)\right\}\leq \frac{\gamma}{2}+\frac{\gamma}{2}=\gamma,
$$
since $\Vert f-f_0\Vert_1\geq \rho_1+\rho_2$ implies $\int_{B}\vert f-f_0\vert\geq \rho_1$ or $ \int_{\bar{B}}\vert f-f_0\vert\geq \rho_2$.
Consequently, we have
\begin{align*}
   \Ec_{n,\alpha}(f_0,\gamma)&\leq \rho_1+\rho_2= \sqrt{4(L+L_0)^2\vert B\vert^2h^{2\beta}+\frac{\vert B \vert}{2h}\frac{C_\gamma^2}{n\alpha^2}}+2\int_{\bar{B}}f_0+\left( 1+\frac{1}{\sqrt{2}}\right)t_2\\
   &\leq C(L,L_0,\gamma) \left[\vert B\vert h^\beta + \sqrt{\frac{\vert B\vert}{hn\alpha^2}} +\int_{\bar{B}}f_0 + \frac{1}{\sqrt{n\alpha^2}} \right].
\end{align*}
The choice 
$
h\asymp \vert B\vert ^{-\frac{1}{2\beta+1}}(n\alpha^2)^{-\frac{1}{2\beta+1}}
$
 yields 
$$
 \Ec_{n,\alpha}(f_0,\gamma) \leq C\left[ \vert B\vert ^{\frac{\beta+1}{2\beta+1}}(n\alpha^2)^{-\frac{\beta}{2\beta+1}} +\int_{\overline B} f_0+ \frac{1}{\sqrt{n\alpha^2}} \right],
$$
which ends the proof of Theorem \ref{Thrm Interactive Upper bound}.
It remains to prove i), ii) and iii).
Let's start with the proof of i).
If $\tau\geq \sqrt{\sum_{j=1}^N\vert p(j)-p_0(j)\vert^2}$, then $\tau\geq \vert p(j)-p_0(j)\vert$ for all $j$, and we thus have 
$$
D_\tau(f)= \sum_{j=1}^N\vert p(j)-p_0(j)\vert^2 = \min\left\{ \sum_{j=1}^N\vert p(j)-p_0(j)\vert^2, \tau\sqrt{\sum_{j=1}^N\vert p(j)-p_0(j)\vert^2} \right\}.
$$
We now deal with the case $\tau< \sqrt{\sum_{j=1}^N\vert p(j)-p_0(j)\vert^2}$.
In this case, we can write
\begin{align*}
D_\tau(f)-\tau\sqrt{\sum_{j=1}^N\vert p(j)-p_0(j)\vert^2}&=\sum_{j=1}^N\vert p(j)-p_0(j)\vert \min\left\{ \vert p(j)-p_0(j)\vert, \tau  \right\}-\tau\frac{\sum_{j=1}^N\vert p(j)-p_0(j)\vert^2}{\sqrt{\sum_{k=1}^N\vert p(k)-p_0(k)\vert^2}}\\
&=\sum_{j=1}^N\vert p(j)-p_0(j)\vert\underbrace{\left[\min\left\{ \vert p(j)-p_0(j)\vert, \tau  \right\}- \frac{\tau\vert p(j)-p_0(j)\vert}{\sqrt{\sum_{k=1}^N\vert p(k)-p_0(k)\vert^2}}  \right]}_{=:A_j},
\end{align*}
and $A_j\geq 0$ for all $j$.
Indeed, if $j$ is such that $\vert p(j)-p_0(j)\vert<\tau$ it holds
$$
A_j=\vert p(j)-p_0(j)\vert\left[1-\frac{\tau}{\sqrt{\sum_{k=1}^N\vert p(k)-p_0(k)\vert^2}}  \right]\geq 0,
$$
and if $j$ is such that $\vert p(j)-p_0(j)\vert\geq \tau$ it holds
$$
A_j=\tau\left[1-\frac{\vert p(j)-p_0(j)\vert}{\sqrt{\sum_{k=1}^N\vert p(k)-p_0(k)\vert^2}}  \right]\geq 0.
$$
Thus, if $\tau< \sqrt{\sum_{j=1}^N\vert p(j)-p_0(j)\vert^2}$ we have
$$
D_\tau(f)\geq \tau\sqrt{\sum_{j=1}^N\vert p(j)-p_0(j)\vert^2}= \min\left\{ \sum_{j=1}^N\vert p(j)-p_0(j)\vert^2, \tau\sqrt{\sum_{j=1}^N\vert p(j)-p_0(j)\vert^2} \right\},
$$
which end the proof of i).
We now prove ii). 
Assume that $\sum_{j=1}^N\vert p(j)-p_0(j)\vert^2\geq C_\gamma^2/(n\alpha^2)$.
It holds $C_\gamma^2\geq C_\gamma$ since $C_\gamma\geq 1$ and we thus have $\sum_{j=1}^N\vert p(j)-p_0(j)\vert^2\geq C_\gamma/(n\alpha^2)$. 
It also holds 
$$
\tau\sqrt{\sum_{j=1}^N\vert p(j)-p_0(j)\vert^2} \geq \tau\cdot\frac{C_\gamma}{\sqrt{n\alpha^2}}=\frac{C_\gamma}{n\alpha^2},
$$
yielding ii).
Finally, Cauchy-Schwarz inequality yields 
\begin{align*}
\left(\int_B\vert f-f_0\vert\right)^2&\leq \vert B\vert \int_B\vert f-f_0\vert^2\\
&\leq \vert B\vert\cdot \left\vert \int_B\vert f-f_0\vert^2-\frac{1}{2h}\sum_{j=1}^N\left( p(j)-p_0(j)\right)^2  \right\vert+\frac{\vert B \vert}{2h}\sum_{j=1}^N\left( p(j)-p_0(j)\right)^2.
\end{align*}
Now, observe that 
$$
 \left\vert \int_B\vert f-f_0\vert^2-\frac{1}{2h}\sum_{j=1}^N\left( p(j)-p_0(j)\right)^2  \right\vert= \left\vert \sum_{j=1}^N\int_{B_j}\left[(f-f_0)(x)- \frac{p(j)-p_0(j)}{2h}\right]^2\dd x \right\vert, 
$$
and observe also that for $x\in B_j$ it holds
\begin{align*}
\left\vert(f-f_0)(x)- \frac{p(j)-p_0(j)}{2h}\right\vert &= \left\vert \frac{1}{2h}\int_{B_j}[(f-f_0)(x)-(f-f_0)(u)] \dd u \right\vert\\
&\leq \frac{1}{2h}\int_{B_j}\left[\vert f(x)-f(u) \vert +\vert f_0(x)-f_0(u) \vert \right]\dd u\\
&\leq \frac{L+L_0}{2h}\int_{B_j}\vert x-u\vert^\beta\dd u\\
&\leq  \frac{L+L_0}{2h}\int_{B_j}(2h)^\beta\dd u\\
&\leq 2(L+L_0)h^\beta.
\end{align*}
This gives 
$$
\left\vert \int_B\vert f-f_0\vert^2-\frac{1}{2h}\sum_{j=1}^N\left( p(j)-p_0(j)\right)^2  \right\vert \leq \sum_{j=1}^N\int_{B_j} 4(L+L_0)^2h^{2\beta}=4(L+L_0)^2\vert B \vert h^{2\beta},
$$
which yields iii).

\subsection{Proof of Theorem \ref{Thrm Interactive Lower bound}}\label{App Proof Lower bound interactive scenario}

Let  $B\subset \Rb$ be a non-empty  compact set, and let $(B_j)_{j=1,\ldots,N}$ be a partition of $B$, $h>0$ be the bandwidth and $(x_1,\ldots,x_N)$ be the centering points, that is $B_j=[x_j-h,x_j+h]$ for all $j\in \llbr 1,N\rrbr$.
Let $\psi : [-1,1]\rightarrow \Rb$ be such that $\psi\in H(\beta,L)$, $\int \psi=0$ and $\int \psi^2=1$.
For $j\in\llbr 1,N\rrbr$, define
$$
\psi_j:t\in\Rb \mapsto \frac{1}{\sqrt{h}}\psi\left(\frac{t-x_j}{h} \right) .
$$
Note that the support of $\psi_j$ is $B_j$, $\int \psi_j=0$ and $(\psi_j)_{j=1,\ldots,N}$ is an orthonormal family.

For $\delta>0$ and $\nu\in\Vc_N=\{-1,1\}^N$, define the functions 
$$f_\nu:x\in\Rb\mapsto f_0(x)+\delta\sum_{j=1}^N\nu_j\psi_j(x),$$
The following lemma shows that for $\delta$ properly chosen, for all $\nu\in\Vc_N$, $f_\nu$ is a density belonging to $H(\beta,L)$ and $f_\nu$ is sufficiently far away from $f_0$ in a $L_1$ sense.
\begin{lemma}
If the parameter $\delta$  appearing in the definition of $f_\nu$ satisfies
$$
\delta\leq \sqrt{h}\cdot\min\left\{ \frac{C_0(B)}{\Vert\psi\Vert_\infty} , \frac{1}{2}\left(1-\frac{L_0}{L} \right)h^\beta\right\},
$$
where $C_0(B):=\min\{f_0(x) : x\in B \}$,  then we have
\begin{enumerate}[label=\roman*)]
\item $f_\nu\geq 0$ and $\int f_\nu=1$, for all $\nu\in \Vc_N$,
\item $f_\nu\in H(\beta,L)$, for all $\nu\in \Vc_N$,
\item $\Vert f_\nu-f_0\Vert_1=   C_1\delta N\sqrt{h}$, for all $\nu\in \Vc_N$, with $C_1=\int_{-1}^1\vert \psi\vert$.
\end{enumerate}
\end{lemma}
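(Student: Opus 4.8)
The plan is to follow the proof of Lemma~\ref{LemmaBSNONI}, which this statement mirrors but simplifies: here the perturbation of $f_0$ is built directly from the bump functions $\psi_j$ rather than from eigenfunctions, so no high-probability event is needed and the three properties hold for every $\nu\in\Vc_N$. The one structural fact I would set up at the outset is that, since $\psi$ is continuous and supported in $[-1,1]$, it vanishes at $\pm1$; hence its zero-extension to $\Rb$ still lies in $H(\beta,L)$, and rescaling gives $\psi_j\in H(\beta,Lh^{-\beta-1/2})$ on all of $\Rb$, with $\psi_j\equiv0$ off the open interval $\mathring B_j$ and $\Vert\psi_j\Vert_\infty=\Vert\psi\Vert_\infty/\sqrt h$. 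I would also note that the $\mathring B_j$ are pairwise disjoint because $(B_j)_j$ is a partition of $B$.

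For \emph{i)}, the identity $\int\psi_j=0$ for all $j$ gives $\int f_\nu=\int f_0=1$ at once. For nonnegativity, $f_\nu\equiv f_0\ge0$ on $\overline B$, while on each $B_j$ one has $f_\nu\ge f_0-\delta\Vert\psi_j\Vert_\infty\ge C_0(B)-\delta\Vert\psi\Vert_\infty/\sqrt h\ge0$, using the hypothesis $\delta\le\sqrt h\,C_0(B)/\Vert\psi\Vert_\infty$.

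For \emph{ii)}, I would show $g_\nu:=f_\nu-f_0=\delta\sum_j\nu_j\psi_j$ belongs to $H(\beta,L-L_0)$, which together with $f_0\in H(\beta,L_0)$ yields $f_\nu\in H(\beta,L)$ by the triangle inequality. Fixing $x,y\in\Rb$, I would distinguish three cases: both $x,y$ outside every $\mathring B_j$, so $g_\nu(x)=g_\nu(y)=0$; $x,y$ in the same $\mathring B_j$, so $|g_\nu(x)-g_\nu(y)|=\delta|\psi_j(x)-\psi_j(y)|\le\delta Lh^{-\beta-1/2}|x-y|^\beta$; and $x\in\mathring B_j$, $y\notin\mathring B_j$, in which case $\psi_j(y)=0$ and $y$ lies either outside all blocks or in some $\mathring B_k$ with $\psi_k(x)=0$, so $|g_\nu(x)-g_\nu(y)|\le\delta|\psi_j(x)-\psi_j(y)|+\delta|\psi_k(x)-\psi_k(y)|\le2\delta Lh^{-\beta-1/2}|x-y|^\beta$. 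In every case $|g_\nu(x)-g_\nu(y)|\le2\delta Lh^{-\beta-1/2}|x-y|^\beta$, and $\delta\le\frac12\sqrt h(1-L_0/L)h^\beta$ makes this constant at most $L-L_0$. This cross-block estimate — where one has to cash in the vanishing of $\psi_j$ at $\partial B_j$ to bound $|\psi_j(x)|$ by a multiple of $|x-y|^\beta$ — is the only point that needs care, and it is exactly the mechanism used in the proof of Lemma~\ref{LemmaBSNONI}.

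For \emph{iii)}, disjointness of the supports gives $|g_\nu(x)|=\delta\sum_j|\psi_j(x)|$ pointwise, hence $\Vert f_\nu-f_0\Vert_1=\delta\sum_{j=1}^N\int_{B_j}|\psi_j|$, and the change of variables $u=(x-x_j)/h$ evaluates each integral to $\sqrt h\int_{-1}^1|\psi|=\sqrt h\,C_1$, giving $\Vert f_\nu-f_0\Vert_1=C_1\delta N\sqrt h$. Overall the lemma is essentially routine; the main (mild) obstacle is just the case bookkeeping in \emph{ii)}, and in fact one could simply invoke the corresponding argument in the proof of Lemma~\ref{LemmaBSNONI} specialized to the choice $v_j=\psi_j$.
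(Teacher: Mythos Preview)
Your proof is correct and follows essentially the same approach as the paper's own proof. The only cosmetic difference is that in \emph{ii)} you first show the perturbation $g_\nu=f_\nu-f_0$ lies in $H(\beta,L-L_0)$ and then add back $f_0\in H(\beta,L_0)$, whereas the paper bounds $|f_\nu(x)-f_\nu(y)|$ directly; the case analysis and the resulting constant $2\delta Lh^{-\beta-1/2}$ are identical, and your explicit remark that $\psi(\pm1)=0$ (so the zero-extension stays in $H(\beta,L)$) is a point the paper uses implicitly.
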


\begin{proof}
We first prove $i)$. 
Since $\int \psi_j= 0$ for all $j=1,\ldots,n$, it holds $\int f_\nu=\int f_0=1$ for all $\nu$.
Since $\text{Supp}(\psi_k)=B_k$ for all $k=1,\ldots,N$, it holds $f_\nu \equiv f_0$ on $B^c$ and thus $f_\nu$ is non-negative on $B^c$.
Now, for $x\in B_j$ it holds for all $\nu\in\Vc_N$
$$
f_\nu(x)= f_0(x)+\delta\nu_j \psi_j(x)\geq C_0(B)-\delta\Vert \psi_j \Vert_\infty \geq C_0(B)-\frac{\delta\Vert \psi\Vert_\infty}{\sqrt{h}}\geq 0,
$$ 
since $\delta\leq C_0(B)\sqrt{h}/\Vert \psi \Vert_\infty$
Thus, $f_\nu$ is non-negative on $\Rb$ for all $\nu\in \Vc_N$.

To prove $ii)$, we have to show that $\vert f_\nu(x)-f_\nu(y)\vert \leq L \vert x-y\vert^\beta$, for all $\nu\in \Vc_N$, for all $x,y\in\Rb$.
Since $f_\nu\equiv f_0$ on $B^c$ and $f_0\in H(\beta,L_0)$, this result is trivial for $x,y\in B^c$.
If $x\in B_l$ and $y\in B_k$ it holds 
\begin{align*}
\vert f_\nu(x)-f_\nu(y)\vert &\leq \vert f_0(x)-f_0(y)\vert + \left\vert \delta\nu_l \psi_l(x) - \delta\nu_k \psi_k(y)  \right\vert\\
&\leq L_0\vert x-y\vert^\beta+ \left\vert \delta\nu_l \psi_l(x) - \delta\nu_l \psi_l(y)  \right\vert+\left\vert \delta\nu_k \psi_k(x) - \delta\nu_k \psi_k(y)  \right\vert\\
&\leq L_0\vert x-y\vert^\beta+\frac{\delta}{\sqrt{h}} \left\vert \psi\left(\frac{x-x_l}{h}\right)-\psi\left(\frac{y-x_l}{h}\right)\right\vert+\frac{\delta}{\sqrt{h}} \left\vert \psi\left(\frac{x-x_k}{h}\right)-\psi\left(\frac{y-x_k}{h}\right)\right\vert\\
&\leq L_0\vert x-y\vert^\beta + \frac{\delta}{h^{\beta+1/2}} \cdot  L\vert x-y\vert^\beta+ \frac{\delta}{h^{\beta+1/2}}\cdot  L\vert x-y\vert^\beta\\
&=\left(\frac{L_0}{L}+ \frac{2\delta}{h^{\beta+1/2}} \right)L\vert x-y\vert^\beta\\
&\leq L\vert x-y\vert^\beta
\end{align*}
where we have used $\psi\in H(\beta,L)$ and $\delta\leq \frac{h^{\beta+1/2}}{2}\left(1-\frac{L_0}{L}\right)$.
Thus,  it holds $\vert f_\nu (x)-f_\nu(y)\vert\leq L\vert x-y\vert^\beta$ for all $\nu\in \Vc_N$, $x\in B_l$ and $y\in B_k$.
The case $x\in B^c$ and $y\in B_k$ can be handled in a similar way, which ends the proof of $ii)$.

We now prove $iii)$.
It holds
$$
\int_\Rb\vert f_\nu-f_0\vert =\int_\Rb\left\vert \delta\sum_{j=1}^N\nu_j\psi_j(x) \right\vert\dd x = \sum_{k=1}^N\int_{B_k}\left\vert \delta\nu_k \psi_k(x) \right\vert\dd x = \delta N\sqrt{h} \int_{-1}^1\vert \psi\vert.
$$
\end{proof}

For a privacy mechanism $Q\in \Qc_\alpha$, we denote by $Q_{f_0}^n$ (respectively $Q_{f_\nu}^n$)  the distribution of $(Z_1,\ldots,Z_n)$ when the $X_i$'s are distributed according to $f_0$ (respectively to $f_\nu$). We set $\bar{Q}^n=1/2^N\sum_{\nu\in\Vc_N} Q_{f_\nu}^n$.
If $\delta$ is chosen such that $\delta\leq \sqrt{h}\cdot\min\left\{ \frac{C_0(B)}{\Vert\psi\Vert_\infty} , \frac{1}{2}\left(1-\frac{L_0}{L} \right)h^\beta\right\}$, setting  $\rho^\star =C_1\delta N\sqrt{h}$, we deduce from the above lemma that if
\begin{equation}\label{Proof interactive LB Cond1}
\text{KL}(Q_{f_0}^n,\bar{Q}^n) \leq 2(1-\gamma)^2 \text{ for all } Q\in \Qc_\alpha,
\end{equation}
 then it holds
$$
 \inf_{Q\in \Qc_\alpha}\inf_{\phi\in \Phi_Q} \sup_{f\in H_1(\rho^\star)}\left\{\Pb_{Q_{f_0}^n}(\phi=1)+ \Pb_{Q_{f}^n}(\phi=0) \right\}\geq \gamma,
$$
where $H_1(\rho^\star):= \{ f \in H(\beta, L) : f\geq 0, \int f=1, \Vert f-f_0\Vert_1 \geq \rho^\star\}$, and consequently $\Ec_{n,\alpha}(f_0,\gamma)\geq \rho^\star$.
Indeed, if \eqref{Proof interactive LB Cond1} holds, then we have
\begin{align*}
\inf_{Q\in \Qc_\alpha}\inf_{\phi\in \Phi_Q} \sup_{f\in H_1(\rho^\star)}\left\{\Pb_{Q_{f_0}^n}(\phi=1)+ \Pb_{Q_{f}^n}(\phi=0) \right\}&\geq \inf_{Q\in \Qc_\alpha}\inf_{\phi\in \Phi_Q} \left( \Pb_{Q_{f_0}^n}(\phi=1)+ \frac{1}{2^N}\sum_{\nu\in\Vc_N}\Pb_{Q_{f_\nu}^n}(\phi=0) \right)\\
&= \inf_{Q\in \Qc_\alpha}\inf_{\phi\in \Phi_Q} \left( 1-\left[\Pb_{Q_{f_0}^n}(\phi=0)-\Pb_{\bar{Q}^n}(\phi=0)  \right] \right)\\
&\geq \inf_{Q\in \Qc_\alpha}\left[ 1-\text{TV}(Q_{f_0}^n, \bar{Q}^n)\right]\\
&\geq \inf_{Q\in \Qc_\alpha}\left[ 1-\sqrt{\frac{\text{KL}(Q_{f_0}^n, \bar{Q}^n)}{2}}\right]\\
&\geq \gamma,
\end{align*}
where the second to last inequality follows from Pinsker's inequality.
We now prove that \eqref{Proof interactive LB Cond1} holds under an extra assumption on $\delta$.
Fix a privacy mechanism $Q\in \Qc_\alpha$.
The conditionnal distribution of $Z_i$ given $Z_1,\ldots,Z_{i-1}$ when $X_i$ is distributed according to $f_0$ or $f_\nu$ will be denoted by $\Lc^{(0)}_{Z_i\mid z_{1:(i-1)}}(dz_i)=\int_\Rb Q_i(dz_i\mid x_i,z_{1:(i-1)})f_0(x_i)dx_i$ and $\Lc^{(\nu)}_{Z_i\mid z_{1:(i-1)}}(dz_i)=\int_\Rb Q_i(dz_i\mid x_i,z_{1:(i-1)})f_\nu(x_i)dx_i$ respectively.
The joint distribution of $Z_1,\ldots,Z_i$ when $X_1,\ldots,X_i$ are i.i.d. from $f_0$  will be denoted by
$$
\Lc^{(0)}_{Z_1,\ldots,Z_i}(dz_{1:i})=\Lc^{(0)}_{Z_i\mid z_{1:(i-1)}}(dz_i)\cdots\Lc^{(0)}_{Z_2\mid z_1}(dz_2)\Lc^{(0)}_{Z_1}(dz_1).
$$
The convexity and tensorization of the Kullback-Leibler divergence give
\begin{align*}
\text{KL}(Q_{f_0}^n,\bar{Q}^n)&\leq \frac{1}{2^N}\sum_{\nu\in\Vc}\text{KL}(Q_{f_0}^n,Q_{f_\nu}^n) \\
&= \frac{1}{2^N}\sum_{\nu\in\Vc}\sum_{i=1}^n \int_{\Zc^{i-1}}\text{KL}\left(\Lc^{(0)}_{Z_i\mid z_{1:(i-1)}} ,\Lc^{(\nu)}_{Z_i\mid z_{1:(i-1)}} \right)\Lc^{(0)}_{Z_1,\ldots,Z_{i-1}}(d z_{1:(i-1)})  .
\end{align*}
According to lemma B.3 in \cite{Butucea_Rohde_Steinberger_2020},there exists  a probability measure $\mu_{z_{1:(i-1)}}$ on $\Zc$ and a family of $\mu_{z_{1:(i-1)}}$-densities $z_i\mapsto q_i(\cdot \mid x_i,z_{1:(i-1)})$ of $Q_i(\cdot \mid x_i,z_{1:(i-1)})$, $x_i\in\Rb$  such that 
$$
e^{-\alpha}\leq q_i(z_i \mid x_i,z_{1:(i-1)})\leq e^{\alpha}, \quad \forall z_i\in\Zc, \forall x_i\in\Rb.
$$
We can thus write $\Lc^{(0)}_{Z_i\mid z_{1:(i-1)}}(dz_i)=m^{(0)}_i(z_i\mid z_{1:(i-1)}) d\mu_{z_{1:(i-1)}}(z_i)$, and $\Lc^{(\nu)}_{Z_i\mid z_{1:(i-1)}}(dz_i)=m^{(\nu)}_i(z_i\mid z_{1:(i-1)}) d\mu_{z_{1:(i-1)}}(z_i)$ with $m^{(0)}_i(z_i\mid z_{1:(i-1)})=\int_\Rb q_i(z_i\mid x_i,z_{1:(i-1)})f_0(x_i)\dd x_i$ and $m^{(\nu)}_i(z_i\mid z_{1:(i-1)})=\int_\Rb q_i(z_i\mid x_i,z_{1:(i-1)})f_\nu(x_i)\dd x_i$. 
Bounding the Kullback-Leibler divergence by the $\chi^2$-divergence, we have
\begin{align*}
&\text{KL}\left(\Lc^{(0)}_{Z_i\mid z_{1:(i-1)}} ,\Lc^{(\nu)}_{Z_i\mid z_{1:(i-1)}} \right)\\
&\leq  \int_{\Zc} \left( \frac{d\Lc^{(0)}_{Z_i\mid z_{1:(i-1)}}}{d\Lc^{(\nu)}_{Z_i\mid z_{1:(i-1)}}}-1  \right)^2\Lc^{(\nu)}_{Z_i\mid z_{1:(i-1)}}(dz_i) \\
&=\int_{\Zc}\left( \frac{m_{i}^{(0)}(z_i\mid z_{1:i-1})-m_{i}^{(\nu)}(z_i\mid z_{1:i-1})  }{m_{i}^{(\nu)}(z_i\mid z_{1:i-1})} \right)^2 m_{i}^{(\nu)}(z_i\mid z_{1:i-1})d\mu_{z_{1:(i-1)}}(z_i)   \\
&= \int_{\Zc} \left(\frac{\int_\Rb q_{i}(z_i\mid x, z_{1:i-1})\left(f_0(x)-f_\nu(x)  \right)  dx}{m_{i}^{(\nu)}(z_i\mid z_{1:i-1})}\right)^2  m_{i}^{(\nu)}(z_i\mid z_{1:i-1})d\mu_{z_{1:(i-1)}}(z_i)   \\
&=\int_{\Zc}\left[\int_{\Rb}\left(\frac{q_{i}(z_i\mid x, z_{1:i-1}) }{m_{i}^{(\nu)}(z_i\mid z_{1:i-1})}-e^{-2\alpha}\right) \left(f_0(x)-f_\nu(x)  \right)dx \right]^2 m_{i}^{(\nu)}(z_i\mid z_{1:i-1})d\mu_{z_{1:(i-1)}}(z_i), \\
\end{align*}
since $\int_\Rb(f_0-f_\nu)=0$. 
Recall that  $q_i$ satisfies $e^{-\alpha}\leq q_i(z_i \mid x,z_{1:(i-1)})\leq e^{\alpha}$.
Thus, we have $e^\alpha=\int e^{\alpha} f_\nu\geq m_i^{(\nu)}(z_i\mid z_{1:(i-1)})\geq e^{-\alpha}\int f_\nu=e^{-\alpha}$, and therefore 
$$
0\leq g_{i,z_{1:i}}(x):=\frac{q_i(z_i\mid x,z_{1:(i-1)} )}{m^{(\nu)}_i(z_i\mid z_{1:(i-1)})}-e^{-2\alpha}\leq z_\alpha=e^{2\alpha}-e^{-2\alpha}.
$$
Thus,
\begin{align*}
&\frac{1}{2^N}\sum_{\nu\in\Vc_N}\left[\int_{\Rb}\left(\frac{q_{i}(z_i\mid x, z_{1:i-1}) }{m_{i}^{(\nu)}(z_i\mid z_{1:i-1})}-e^{-2\alpha}\right) \left(f_0(x)-f_\nu(x)  \right)dx \right]^2 m_{i}^{(\nu)}(z_i\mid z_{1:i-1})\\
&\leq e^\alpha\delta^2\frac{1}{2^N}\sum_{\nu\in\Vc_N}\left[\sum_{k=1}^N\nu_k\int_{\Rb}g_{i,z_{1:i}}(x) \psi_k(x)dx \right]^2\\
&= e\delta^2\sum_{k=1}^N\left[\int_{\Rb}g_{i,z_{1:i}}(x) \psi_k(x)dx \right]^2\\
&\leq e\delta^2z_\alpha^2\sum_{k=1}^N \Vert \psi_k\Vert_1^2
\leq e\delta^2z_\alpha^2NhC_1^2 = \frac{e}{2}C_1^2\delta^2z_\alpha^2\vert B\vert,
\end{align*}
where we recall that $C_1=\int \vert \psi\vert$.
We thus obtain
$$
\text{KL}(Q_{f_0}^n,\bar{Q}^n)\leq \frac{e}{2}C_1^2\delta^2nz_\alpha^2\vert B\vert,
$$
and \eqref{Proof interactive LB Cond1} holds as soon as 
$$
\delta\leq\sqrt{\frac{4(1-\gamma)^2}{eC_1^2nz_\alpha^2\vert B\vert}}.
$$
Finally, taking $\delta=\min\left\{\sqrt{h}\cdot\min\left\{ \frac{C_0(B)}{\Vert\psi\Vert_\infty} , \frac{1}{2}\left(1-\frac{L_0}{L} \right)h^\beta\right\}, \sqrt{\frac{4(1-\gamma)^2}{eC_1^2nz_\alpha^2\vert B\vert}} \right\}$, we obtain
$$
\Ec_{n,\alpha}(f_0,\gamma)\geq C(\psi,\gamma)\min\left\{\vert B\vert\min\left\{ \frac{C_0(B)}{\Vert\psi\Vert_\infty} , \frac{1}{2}\left(1-\frac{L_0}{L} \right)h^\beta\right\}, \frac{\sqrt{\vert B\vert}}{\sqrt{h}\sqrt{n z_\alpha^2}}\right\}.
$$
If $B$ is chosen such that $C_0(B)=\min\{ f_0(x), x\in B\}\geq Ch^\beta$, then the bound becomes
$$
\Ec_{n,\alpha}(f_0,\gamma)\geq C(\psi,\gamma,L,L_0)\min\left\{\vert B\vert h^\beta, \frac{\sqrt{\vert B\vert}}{\sqrt{h}\sqrt{n z_\alpha^2}}\right\},
$$
and the choice $h\asymp \vert B \vert^{-1/(2\beta+1)}(nz_\alpha^2)^{-1/(2\beta+1)}$ yields
$$
\Ec_{n,\alpha}(f_0,\gamma)\geq C(\psi,\gamma,L,L_0)\vert B \vert^{\frac{\beta+1}{2\beta+1}}(nz_\alpha^2)^{-\frac{\beta}{2\beta+1}}.
$$
Note that with this choice of $h$, the condition $C_0(B)\geq Ch^\beta$ becomes $\vert B \vert^{\beta/(2\beta+1)}C_0(B) \geq C(nz_\alpha^2)^{-\beta/(2\beta+1)}$.

\section{Proofs of Section \ref{Section Examples}}\label{App Proof examples}

\subsection{Example \ref{Ex Pareto}}

We first prove the result for the non-interactive case.
Take 
$$
B=[a,T], \quad \text{with} \quad T=(n\alpha^2)^{\frac{2\beta}{k(4\beta+3)+3\beta+3}}.
$$
Note that $T>a$ for $n$ large enough.
Theorem \ref{Thrm Upper Bound NI} gives
\begin{align*}
\Ec_{n,\alpha}^\text{NI}(f_0,\gamma)&\lesssim(T-a)^{\frac{3\beta+3}{4\beta+3}}(n\alpha^2)^{-\frac{2\beta}{4\beta+3}} + \left(\frac{a}{T}\right)^k\\
&\lesssim  T^{\frac{3\beta+3}{4\beta+3}}(n\alpha^2)^{-\frac{2\beta}{4\beta+3}} + T^{-k} \\
&= (n\alpha^2)^{-\frac{2k\beta}{k(4\beta+3)+3\beta+3}}.
\end{align*}
To obtain the lower bound, we first check that condition \eqref{Eq. Condition on B for NI Lower bound} in Theorem \ref{Thrm Lower bound NI} is satisfied.
Since $T\rightarrow +\infty$ as $n\rightarrow \infty$, it holds for $n$ large enough
\begin{align*}
\vert B \vert^\frac{\beta}{4\beta+3}C_0(B) &=(T-a)^\frac{\beta}{4\beta+3}\frac{ka^k}{T^{k+1}} \\
&=ka^kT^{\frac{\beta}{4\beta+3}-(k+1)}\left(1-\frac{a}{T}\right)^\frac{\beta}{4\beta+3} \\
&\gtrsim T^\frac{\beta-(k+1)(4\beta+3)}{4\beta+3}\\
&\gtrsim C(n\alpha^2)^{-\frac{2\beta}{4\beta+3}}.
\end{align*}
Condition \eqref{Eq. Condition on B for NI Lower bound} is thus satisfied and Theorem \ref{Thrm Lower bound NI} thus yields for $n$ large enough
\begin{align*}
\Ec^{\text{NI}}_{n,\alpha}(f_0,\gamma)&\gtrsim \left[\log\left(C (T-a)^{\frac{4\beta+4}{4\beta+3}}(n\alpha^2)^\frac{2}{4\beta+3}  \right)\right]^{-1}(T-a)^{\frac{3\beta+3}{4\beta+3}}(n\alpha^2)^{-\frac{2\beta}{4\beta+3}}\\
&\gtrsim \left[\log\left(C T^{\frac{4\beta+4}{4\beta+3}}(n\alpha^2)^\frac{2}{4\beta+3}  \right)\right]^{-1}T^{\frac{3\beta+3}{4\beta+3}}(n\alpha^2)^{-\frac{2\beta}{4\beta+3}}\\
&\gtrsim  \left[\log\left(C (n\alpha^2)^{\frac{4\beta+4}{4\beta+3}\cdot \frac{2\beta}{k(4\beta+3)+3\beta+3}+\frac{2}{4\beta+3}}  \right)\right]^{-1}(n\alpha^2)^{-\frac{2k\beta}{k(4\beta+3)+3\beta+3}}.
\end{align*}
The proof in the interactive scenario follows the same lines at the exception of the choice of $T$ which should be taken as 
$$
T= (n\alpha^2)^{\frac{\beta}{k(2\beta+1)+\beta+1}}.
$$

\subsection{Example \ref{Ex Exponential}}

We first prove the result for the non-interactive case.
Take 
$$
B=[0,T], \quad \text{with} \quad T=\frac{1}{\lambda}\cdot\frac{2\beta}{4\beta+3}\log(n\alpha^2).
$$
Theorem \ref{Thrm Upper Bound NI} gives
\begin{align*}
\Ec_{n,\alpha}^\text{NI}(f_0,\gamma)&\lesssim T^{\frac{3\beta+3}{4\beta+3}}(n\alpha^2)^{-\frac{2\beta}{4\beta+3}} + \exp(-\lambda T)\\
&\lesssim  \log(n\alpha^2)^{\frac{3\beta+3}{4\beta+3}}(n\alpha^2)^{-\frac{2\beta}{4\beta+3}} + (n\alpha^2)^{-\frac{2\beta}{4\beta+3}} \\
&\lesssim \log(n\alpha^2)^{\frac{3\beta+3}{4\beta+3}}(n\alpha^2)^{-\frac{2\beta}{4\beta+3}}.
\end{align*}
Now, observe that 
$$ 
\vert B \vert^\frac{\beta}{4\beta+3}C_0(B) =T^\frac{\beta}{4\beta+3}\cdot \lambda\exp(-\lambda T) =\lambda T^\frac{\beta}{4\beta+3}(n\alpha^2)^{-\frac{2\beta}{4\beta+3}}\gtrsim (n\alpha^2)^{-\frac{2\beta}{4\beta+3}}.
$$
Thus, condition \eqref{Eq. Condition on B for NI Lower bound} is satisfied and Theorem \ref{Thrm Lower bound NI} yields 
\begin{align*}
\Ec^{\text{NI}}_{n,\alpha}(f_0,\gamma)&\gtrsim \left[\log\left(C T^{\frac{4\beta+4}{4\beta+3}}(n\alpha^2)^\frac{2}{4\beta+3}  \right)\right]^{-1}T^{\frac{3\beta+3}{4\beta+3}}(n\alpha^2)^{-\frac{2\beta}{4\beta+3}}\\
&\gtrsim \left[\log\left(C \log(n\alpha^2)^{\frac{4\beta+4}{4\beta+3}}(n\alpha^2)^\frac{2}{4\beta+3}  \right)\right]^{-1}\log(n\alpha^2)^{\frac{3\beta+3}{4\beta+3}}(n\alpha^2)^{-\frac{2\beta}{4\beta+3}}.
\end{align*}
The proof in the interactive scenario follows the same lines at the exception of the choice of $T$ which should be taken as 
$$
T=\frac{1}{\lambda}\cdot\frac{\beta}{2\beta+1}\log(n\alpha^2).
$$

\subsection{Example \ref{Ex Normal}}

We first prove the result for the non-interactive case.
Take 
$$
B=[-T,T], \quad \text{with} \quad T= \sqrt{\frac{4\beta}{4\beta+3}\log(n\alpha^2)}.
$$
Theorem \ref{Thrm Upper Bound NI} gives
\begin{align*}
\Ec_{n,\alpha}^\text{NI}(f_0,\gamma)&\lesssim (2T)^{\frac{3\beta+3}{4\beta+3}}(n\alpha^2)^{-\frac{2\beta}{4\beta+3}} + \frac{2}{\sqrt{2\pi}}\int_T^{+\infty}e^{-x^2/2}dx\\
&\lesssim  T^{\frac{3\beta+3}{4\beta+3}}(n\alpha^2)^{-\frac{2\beta}{4\beta+3}} + \frac{1}{T}\exp\left( -\frac{T^2}{2} \right) \\
&\lesssim  \log(n\alpha^2)^{\frac{3\beta+3}{2(4\beta+3)}}(n\alpha^2)^{-\frac{2\beta}{4\beta+3}} + (n\alpha^2)^{-\frac{2\beta}{4\beta+3}} \\
&\lesssim \log(n\alpha^2)^{\frac{3\beta+3}{2(4\beta+3)}}(n\alpha^2)^{-\frac{2\beta}{4\beta+3}}.
\end{align*}
Now, observe that 
$$ 
\vert B \vert^\frac{\beta}{4\beta+3}C_0(B) =(2T)^\frac{\beta}{4\beta+3}\cdot \frac{1}{\sqrt{2\pi}}\exp\left(-\frac{T^2}{2}  \right)  \gtrsim (n\alpha^2)^{-\frac{2\beta}{4\beta+3}}.
$$
Thus, condition \eqref{Eq. Condition on B for NI Lower bound} is satisfied and Theorem \ref{Thrm Lower bound NI} yields 
\begin{align*}
\Ec^{\text{NI}}_{n,\alpha}(f_0,\gamma)&\gtrsim \left[\log\left(C (2T)^{\frac{4\beta+4}{4\beta+3}}(n\alpha^2)^\frac{2}{4\beta+3}  \right)\right]^{-1}(2T)^{\frac{3\beta+3}{4\beta+3}}(n\alpha^2)^{-\frac{2\beta}{4\beta+3}}\\
&\gtrsim \left[\log\left(C \log(n\alpha^2)^{\frac{4\beta+4}{2(4\beta+3)}}(n\alpha^2)^\frac{2}{4\beta+3}  \right)\right]^{-1}\log(n\alpha^2)^{\frac{3\beta+3}{2(4\beta+3)}}(n\alpha^2)^{-\frac{2\beta}{4\beta+3}}
\end{align*}
The proof in the interactive scenario follows the same lines at the exception of the choice of $T$ which should be taken as 
$$
T= \sqrt{\frac{2\beta}{2\beta+1}\log(n\alpha^2)}.
$$

\subsection{Example \ref{Ex Cauchy}}

We first prove the result for the non-interactive case.
Take 
$$
B=[-T,T], \quad \text{with} \quad T= (n\alpha^2)^\frac{2\beta}{7\beta+6}.
$$
Theorem \ref{Thrm Upper Bound NI} gives
\begin{align*}
\Ec_{n,\alpha}^\text{NI}(f_0,\gamma)&\lesssim (2T)^{\frac{3\beta+3}{4\beta+3}}(n\alpha^2)^{-\frac{2\beta}{4\beta+3}} + \frac{2}{\pi a}\int_{T}^{+\infty}\frac{a^2}{a^2+x^2}dx\\
&\lesssim  T^{\frac{3\beta+3}{4\beta+3}}(n\alpha^2)^{-\frac{2\beta}{4\beta+3}} + \arctan\left( \frac{a}{T}\right) .
\end{align*}
Since $T\rightarrow \infty$ as $n\rightarrow \infty$, we have $\arctan(a/T)\sim_{n\rightarrow \infty}  a/T$ and thus $\arctan(a/T)\leq 2(a/T)$ for $n$ large enough.
This gives for $n$ large enough
$$
\Ec_{n,\alpha}^\text{NI}(f_0,\gamma)\lesssim   T^{\frac{3\beta+3}{4\beta+3}}(n\alpha^2)^{-\frac{2\beta}{4\beta+3}} + \frac{1}{T}= (n\alpha^2)^{-\frac{2\beta}{7\beta+6}}
$$
Now, observe that for $n$ large enough it holds
$$ 
\vert B \vert^\frac{\beta}{4\beta+3}C_0(B) =(2T)^\frac{\beta}{4\beta+3}\cdot \frac{1}{\pi a}\frac{a^2}{T^2+a^2} \gtrsim T^\frac{\beta}{4\beta+3}\cdot \frac{1}{T^2} = (n\alpha^2)^{-\frac{2\beta}{4\beta+3}}.
$$
Thus, condition \eqref{Eq. Condition on B for NI Lower bound} is satisfied and Theorem \ref{Thrm Lower bound NI} yields 
\begin{align*}
\Ec^{\text{NI}}_{n,\alpha}(f_0,\gamma)&\gtrsim \left[\log\left(C (2T)^{\frac{4\beta+4}{4\beta+3}}(n\alpha^2)^\frac{2}{4\beta+3}  \right)\right]^{-1}(2T)^{\frac{3\beta+3}{4\beta+3}}(n\alpha^2)^{-\frac{2\beta}{4\beta+3}}\\
&\gtrsim \left[\log\left(C (n\alpha^2)^{\frac{4\beta+4}{4\beta+3}\cdot\frac{2\beta}{7\beta+6} + \frac{2}{4\beta+3} } \right)\right]^{-1}(n\alpha^2)^{-\frac{2\beta}{7\beta+6}}.
\end{align*}
The proof in the interactive scenario follows the same lines at the exception of the choice of $T$ which should be taken as 
$$
T=(n\alpha^2)^\frac{\beta}{3\beta+2}.
$$

\subsection{Example \ref{Ex Spiky null}}

We first prove the result for the non-interactive case.
The upper bound is straightforward taking $B=[0, 2/\sqrt{L_0}]$.
For the lower bound, take 
$$
B=\left[ T , \frac{2}{\sqrt{L_0}}-T\right ], \quad \text{with} \quad T= (n\alpha^2)^{-\frac{2\beta}{4\beta+3}}.
$$
Note that for $n$ large enough it holds $T<1/(2\sqrt{L_0})$ and we thus have
$$ 
\vert B \vert^\frac{\beta}{4\beta+3}C_0(B) =\left( \frac{2}{\sqrt{L_0}}-2T \right)^\frac{\beta}{4\beta+3}\cdot L_0T \gtrsim T= (n\alpha^2)^{-\frac{2\beta}{4\beta+3}}.
$$
Thus, condition \eqref{Eq. Condition on B for NI Lower bound} is satisfied and Theorem \ref{Thrm Lower bound NI} yields 
\begin{align*}
\Ec^{\text{NI}}_{n,\alpha}(f_0,\gamma)&\gtrsim \left[\log\left(C \left( \frac{2}{\sqrt{L_0}}-2T \right)^{\frac{4\beta+4}{4\beta+3}}(n\alpha^2)^\frac{2}{4\beta+3}  \right)\right]^{-1}\left( \frac{2}{\sqrt{L_0}}-2T \right)^{\frac{3\beta+3}{4\beta+3}}(n\alpha^2)^{-\frac{2\beta}{4\beta+3}}\\
&\gtrsim \left[\log\left(C (n\alpha^2)^\frac{2}{4\beta+3}  \right)\right]^{-1}(n\alpha^2)^{-\frac{2\beta}{4\beta+3}}
\end{align*}
The proof in the interactive scenario follows the same lines at the exception of the choice of $T$ for the lower bound which should be taken as 
$$
T=(n\alpha^2)^{-\frac{\beta}{2\beta+1}}.
$$

\subsection{Example \ref{Ex Beta}}

Let $a\geq 1$, $b\geq 1$ with $a>1$ or $b>1$.
We first prove the result for the non-interactive case.
The upper bound is straightforward taking $B=[0, 1]$.
For the lower bound, we need to distinguish different cases.

\underline{\textit{Case 1 : $a > 1, b=1$.}} In this case  $f_0$ is strictly non-decreasing on $[0,1]$ and $f_0(0)=0$. In order that $f_0$ is bounded from below by a strictly positive quantity, we thus take $B$ of the form $B=[T_1,1]$ with $0<T_1<1$. 
We choose 
$$
T_1= (n\alpha^2)^{-\frac{2\beta}{(a-1)(4\beta+3)}}.
$$
Observe that that for $n$ large enough we have
$$
\vert B \vert^\frac{\beta}{4\beta+3}C_0(B) =\left[1-T_1  \right]^\frac{\beta}{4\beta+3}\cdot \frac{1}{B(a,1)} T_1^{a-1} \gtrsim T_1^{a-1}=(n\alpha^2)^{-\frac{2\beta}{4\beta+3}}
$$
Thus, condition \eqref{Eq. Condition on B for NI Lower bound} is satisfied and Theorem \ref{Thrm Lower bound NI} yields for $n$ large enough
\begin{align*}
\Ec^{\text{NI}}_{n,\alpha}(f_0,\gamma)&\gtrsim \left[\log\left(C \left[1-T_1  \right]^{\frac{4\beta+4}{4\beta+3}}(n\alpha^2)^\frac{2}{4\beta+3}  \right)\right]^{-1}\left[1-T_1  \right]^{\frac{3\beta+3}{4\beta+3}}(n\alpha^2)^{-\frac{2\beta}{4\beta+3}}\\
&\gtrsim \left[\log\left(C(n\alpha^2)^\frac{2}{4\beta+3}  \right)\right]^{-1}(n\alpha^2)^{-\frac{2\beta}{4\beta+3}}.
\end{align*}

\underline{\textit{Case 2 : $a= 1, b> 1$.}} In this case $f_0$ is strictly non-increasing on $[0,1]$ and $f_0(1)=0$. In order that $f_0$ is bounded from below by a strictly positive quantity, we thus take $B$ of the form $B=[0,1-T_2]$ with $0<T_2<1$. 
We choose 
$$
T_2= (n\alpha^2)^{-\frac{2\beta}{(b-1)(4\beta+3)}}.
$$
Observe that that for $n$ large enough we have
$$
\vert B \vert^\frac{\beta}{4\beta+3}C_0(B) =\left[1-T_2  \right]^\frac{\beta}{4\beta+3}\cdot \frac{1}{B(1,b)} T_2^{b-1} \gtrsim T_2^{b-1}=(n\alpha^2)^{-\frac{2\beta}{4\beta+3}}
$$
Thus, condition \eqref{Eq. Condition on B for NI Lower bound} is satisfied and Theorem \ref{Thrm Lower bound NI} yields for $n$ large enough
\begin{align*}
\Ec^{\text{NI}}_{n,\alpha}(f_0,\gamma)&\gtrsim \left[\log\left(C \left[1-T_2  \right]^{\frac{4\beta+4}{4\beta+3}}(n\alpha^2)^\frac{2}{4\beta+3}  \right)\right]^{-1}\left[1-T_2  \right]^{\frac{3\beta+3}{4\beta+3}}(n\alpha^2)^{-\frac{2\beta}{4\beta+3}}\\
&\gtrsim \left[\log\left(C(n\alpha^2)^\frac{2}{4\beta+3}  \right)\right]^{-1}(n\alpha^2)^{-\frac{2\beta}{4\beta+3}}.
\end{align*}

\underline{\textit{Case 3 : $a> 1, b> 1$.}}
In this case,  $f_0$ is non-decreasing on $[0,(a-1)/(a+b-2)]$, non-increasing on $[(a-1)/(a+b-2),1]$ and $f_0(0)=f_0(1)=0$. In order that $f_0$ is bounded from below by a strictly positive quantity, we thus take $B$ of the form $B=[T_3,1-T_4]$ and we choose 
$$
T_3= (n\alpha^2)^{-\frac{2\beta}{(a-1)(4\beta+3)}}, \quad T_4= (n\alpha^2)^{-\frac{2\beta}{(b-1)(4\beta+3)}}.
$$
Observe that for $n$ large enough it holds
$$
0<T_3<\frac{a-1}{a+b-2}<1-T_4<1.
$$
Observe that for $n$ large enough we have
\begin{align*}
\vert B \vert^\frac{\beta}{4\beta+3}C_0(B) &=\left[1-(T_3+T_4)  \right]^\frac{\beta}{4\beta+3}\cdot \frac{1}{B(a,b)}\min\left\{ T_3^{a-1}(1-T_3)^{b-1}, (1-T_4)^{a-1}T_4^{b-1} \right\}\\
&\gtrsim \min\left\{T_3^{a-1},T_4^{b-1}  \right\}\\
&\gtrsim (n\alpha^2)^{-\frac{2\beta}{4\beta+3}}.
\end{align*}
Thus, condition \eqref{Eq. Condition on B for NI Lower bound} is satisfied and Theorem \ref{Thrm Lower bound NI} yields for $n$ large enough
\begin{align*}
\Ec^{\text{NI}}_{n,\alpha}(f_0,\gamma)&\gtrsim \left[\log\left(C \left[1-(T_3+T_4)  \right]^{\frac{4\beta+4}{4\beta+3}}(n\alpha^2)^\frac{2}{4\beta+3}  \right)\right]^{-1}\left[1-(T_3+T_4)  \right]^{\frac{3\beta+3}{4\beta+3}}(n\alpha^2)^{-\frac{2\beta}{4\beta+3}}\\
&\gtrsim \left[\log\left(C(n\alpha^2)^\frac{2}{4\beta+3}  \right)\right]^{-1}(n\alpha^2)^{-\frac{2\beta}{4\beta+3}}
\end{align*}
The proof in the interactive scenario follows the same lines at the exception of the choice of $T_1$ and $T_2$ which should be taken as 
$$
T_1= T_3=(n\alpha^2)^{-\frac{\beta}{(a-1)(2\beta+1)}}, \quad T_2=T_4= (n\alpha^2)^{-\frac{\beta}{(b-1)(2\beta+1)}}.
$$

\subsection{Example \ref{Ex Slow varying function}}

We prove the result for the non interactive case. Take
$$B=B_{n,\alpha}\in \arg \inf_{B \text{ compact set}}
\left\{ \int_{\overline{B}}{ f_0} \geq \vert B \vert^{\frac{3\beta+3}{4\beta+3}}(n\alpha^2)^{-\frac{2\beta}{4\beta+3}} + \frac 1{\sqrt{n\alpha^2}}  \text{ and } \inf_B f_0 \geq \sup_{\overline{B}} f_0  \right\},
$$
It holds $B=B_{n,\alpha}=[0,a_*]$ with 
$$a_*=\sup\left\{ a : \frac{(\log 2)^{A}}{(\log(2+a))^{A}} \geq a^{\frac{3\beta+3}{4\beta +3}}(n\alpha^2)^{-\frac{2\beta}{4\beta +3}}+\frac{1}{\sqrt{n\alpha^2}}\right\},$$
and Theorem \ref{Thrm Upper Bound NI} thus gives
$$\Ec^{\text{NI}}_{n,\alpha}(f_0,\gamma)\lesssim a_*^{\frac{3\beta+3}{4\beta +3}}(n\alpha^2)^{-\frac{2\beta}{4\beta +3}}+\frac{1}{\sqrt{n\alpha^2}}\lesssim a_*^{\frac{3\beta+3}{4\beta +3}}(n\alpha^2)^{-\frac{2\beta}{4\beta +3}},$$
where the last inequality follows from $a_*\geq 1\geq (n\alpha^2)^{-{\frac{1}{2\beta+2}}}.$\\
Inspecting the proof of Theorem \ref{Thrm Lower bound NI}, we see that the lower bound can be rewritten 
$$ \Ec^{\text{NI}}_{n,\alpha}(f_0,\gamma)\gtrsim \left[\log\left(C \vert B\vert^{\frac{4\beta+4}{4\beta+3}}(n \alpha^2)^\frac{2}{4\beta+3}  \right)\right]^{-1} \min\left\{\vert B \vert C_0(B); \vert B \vert^{\frac{3\beta+3}{4\beta+3}}(n\alpha^2)^{-\frac{2\beta}{4\beta +3}} \right\}. $$
Yet, for $B=B_{n,\alpha}=[0,a_*]$ it holds
$$\vert B_{n,\alpha} \vert C_0(B_{n,\alpha})=\frac{A(\log 2)^Aa_*}{(a_*+2)(\log(2+a_*))^{A+1}}\gtrsim \frac{1}{\log(a_*)}\times a_*^{\frac{3\beta+3}{4\beta +3}}(n\alpha^2)^{-\frac{2\beta}{4\beta +3}},  $$
yielding 
$$\Ec_{n,\alpha}^\text{NI}(f_0,\gamma)\gtrsim \left[\log\left(C a_*^{\frac{4\beta+4}{4\beta+3}}(n\alpha^2)^\frac{2}{4\beta+3}  \right)\right]^{-1}\left[\log(a_*) \right]^{-1}a_*^{\frac{3\beta+3}{4\beta+3}}(n\alpha^2)^{-\frac{2\beta}{4\beta+3}}.$$

\printbibliography

\end{document}